\newcommand{\Val}{\operatorname{Val}}
\providecommand{\hasta}{\longrightarrow}
\newcommand*{\CC}[1]{\C P^{#1}_{\lambda}}
\newcommand*{\RR}[1]{\mathbb{S}^{#1}_{\lambda}}
\newcommand*{\Inv}[2]{\mathcal{I}_{\lambda}^{#1,#2}}
\newcommand*{\derivada}[1]{\partial_{#1}}
\newcommand{\vlr}{\mathcal{V}_{\lambda,\mathbb{R}}^m}
\newcommand{\vlc}{\mathcal{V}_{\lambda,\mathbb{C}}^n}
\def\:{\colon}
\def\R{\mathbb{R}}
\def\C{\mathbb{C}}
\def\T{\mathbf{T}}
\def\V{\mathcal{V}}
\providecommand{\nm}[1]{\operatorname{#1}}
\newcommand*{\suce}[1]{\left\lbrace #1 \right\rbrace}
\newcommand*{\restr}[2]{\left.#1\right|_{#2}}
\newtheorem{prop}{Proposition}[section]
\newtheorem{teo}[prop]{Theorem}
\newtheorem*{teosin}{Theorem}
\newtheorem{cor}[prop]{Corollary}
\newtheorem{lema}[prop]{Lemma}
\theoremstyle{definition}
\newtheorem{defi}{Definition}[section]
\theoremstyle{definition}
\theoremstyle{remark}
\newtheorem*{ob}{Remark}
\theoremstyle{definition}
\theoremstyle{definition}
\theoremstyle{remark}
\title{Tube formulas for valuations in complex space forms}
\author{Gil Solanes}
\author{Juan Andrés Trillo}
\email{gil.solanes@uab.cat}
\email{juan.trillo@uab.cat}
\address{Departament de Matem\`atiques, Universitat Aut\`onoma de Barcelona, 08193 Bellaterra, Spain, and Centre de Recerca Matem\`atica, Campus de Bellaterra, 08193 Bellaterra, Spain}
\address{Departament de Matem\`atiques, Universitat Aut\`onoma de Barcelona, 08193 Bellaterra, Spain}
\thanks{Work partially supported by the FEDER/MICINN/AEI grants PGC2018-095998-B-I00, PID2021-125625NB-I00 and the AGAUR grant 2021-SGR-01015.  The first  author is supported by the Serra Hunter Programme and the MICINN/AEI Mar\'ia de Maeztu grant CEX2020-001084-M}
\begin{document}
\begin{abstract}
   Given an isometry invariant valuation on a complex space form we compute its value on the tubes of sufficiently small radii around a set of positive reach. This generalizes classical formulas of Weyl, Gray and others about the volume of tubes. We also develop a general framework on tube formulas for valuations in  riemannian manifolds.
\end{abstract}

\maketitle
\tableofcontents

\section{Introduction}

For a compact convex set $A\subset \R^m$,  the \emph{Steiner formula} computes the volume of the set $A_t$ consisting of points at distance smaller than $t$ from $A$ as follows
\begin{equation}\label{eq:steinerintro}\mathrm{vol}(A_t) = \sum_{i = 0}^m \omega_{m-i}\mu_i(A)t^{m-i}.\end{equation}
Here the functionals $\mu_i$ are the so-called {\em intrinsic volumes}, and the normalizing constant $\omega_k$ is the volume of the $k$-dimensional unit ball. By Hadwiger's characterization theorem, the intrinsic volumes span the space of {\em valuations} (finitely additive functionals on convex bodies) that are  continous and invariant under rigid motions.

The famous {\em tube formula} of H. Weyl is the assertion that \eqref{eq:steinerintro} holds true for $A\subset \R^m$ a smooth compact submanifold and $t\geq0$ small enough, with the additional insight that the coefficients $\mu_i(A)$ depend only on the induced riemannian structure of $A$. Even more generally, Federer extended the validity of \eqref{eq:steinerintro} to the class of compact {\em sets of positive reach}. Later on, the same formula has been proven to hold for bigger classes of sets (see e.g. \cite{FuSubAn, FuPokornyRataj}). As for the coefficients $\mu_i$, the current perspective is to view them as {\em smooth valuations} in the sense of Alesker's theory of valuations on manifolds (see \cite{Aleskersurvey}). 

Already in Weyl's original work, the tube formula was extended to the sphere and to hyperbolic space.  In that case, instead of a polynomial on the radius $t$ one has a polynomial in certain functions $\sin_\lambda(t),\cos_\lambda(t)$ whose definition we recall in \eqref{eq:sin}. Later, Gray and Vanhecke computed the volume of tubes around submanifolds of rank one symmetric spaces (cf. \cite{GrayVanhecke}).

All these classical tube formulas are most naturally expressed in the language of valuations on manifolds. Furthermore, this theory has allowed for the determination of kinematic formulas (a far-reaching generalization of tube formulas) in isotropic spaces. These spaces are riemannian manifolds under the  action of a group of isometries that is transitive on the sphere bundle. For instance, in \cite{BH} and \cite{bfs} the kinematic formulas  of complex complex space forms (i.e. complex euclidean, projective and hyperbolic spaces) were obtained, and Gray's tube formulas on such spaces were recovered.

\medskip Tube formulas, however, exist also for other valuations than the volume, and these do not follow from the kinematic formulas. For instance, differentiating the Steiner formula one easily obtains
\begin{equation}
 \label{eq:steiner_sup}
\mu_k(A_t) = \sum_{j=0}^{k}\binom{m-j}{m-k}\dfrac{\omega_{m-j}}{\omega_{m-k}}\mu_{j}(A) t^{k-j},\qquad A\subset \R^m.
\end{equation}
In real space forms (i.e. the sphere and hyperbolic space), Santaló obtained similar tube formulas for all isometry invariant valuations (see \cite{santalo}). For rank one symmetric spaces, the tube formulas of a certain class of valuations (integrated mean curvatures) were found in \cite{GrayVanhecke}, still with a differential-geometric viewpoint. There are however many invariant valuations on these spaces that were not considered.

\bigskip
In this paper we prove the existence of  tube formulas for any smooth valuation in a riemannian manifold. Then we develop a method to determine these formulas for the invariant valuations of an isotropic space. Using this method we compute all tube formulas explicitly in the case of complex space forms. In fact, our approach also reveals some intersting aspects in the case of real space forms.

Let us briefly describe our results. First, given a riemannian manifold $M$ we construct a family $\mathbf T_t$ of {\em tubular operators} on the space $\mathcal V(M)$ of smooth valuations of $M$ such that for any $\mu\in\mathcal V(M)$ and every compact set of positive reach $A\subset M$ one has
\[
 \mu(A_t)=\mathbf{T}_t\mu(A),
\]
for $t\geq 0$ small enough (see Definition \ref{def:operador_tubular} and Corollary \ref{cor:igualdad_tubos}). Differentiating $\mathbf{T}_t$ at $t=0$ yields an operator $\partial\colon \mathcal V(M)\to\mathcal V(M)$. If $G$ is a group of isometries of $M$ acting transitively on the sphere bundle $SM$, the subspace $\mathcal V(M)^G$ of $G$-invariant valuations is finite dimensional, and the determination of the tube operators $\mathbf{T}_t$ reduces to the computation of the flow generated by $\partial$.

Once this general framework is established we concentrate on the complex space forms $\CC{n}$. For $\lambda=0$ this refers to complex euclidean space $\C^n$ under the group of complex isometries, and for $\lambda\neq 0$ this is the $n$-dimensional complex projective or hyperbolic space of constant holomorphic curvature $4\lambda$, under the full group of isometries $G$. We simply denote $\mathcal V_{\lambda,\C}^n := \mathcal V(\C P^n_\lambda)^G$.

For $\lambda=0$, we will readily obtain the tube formulas $\mathbf{T}_t\mu$ of all translation-invariant and $U(n)$-invariant continuous valuations $\mu$ thanks to the existence of an $\mathfrak{sl}_2$-module structure on the space $\Val^{U(n)}$ of such valuations. This structure,  discovered by Bernig and Fu in \cite{BH}, is induced by two natural operators $\Lambda,L$, the first of which is a normalization of $\partial$.

Remarkably, it turns out that also for $\lambda\neq 0$ the derivation operator $\partial$ is closely related to the operators $\Lambda,L$ of the flat space. Indeed, in Theorem \ref{teo:formulasor} we find an isomorphism $\Phi_\lambda\colon \Val^{U(n)}\to \mathcal V_{\lambda,\C}^n$ such that
\begin{equation}\label{eq:intertwines_intro}
 \left.\partial\right|_{\mathcal V_{\lambda,\C}^n}=\Phi_\lambda\circ (\Lambda-\lambda L)\circ \Phi_\lambda^{-1}.
\end{equation}

Using the decomposition of $\Val^{U(n)}$ into irreducible components, the computation of the tubular operator boils down to the solution of a Cauchy problem in some abstract model spaces, yielding our main result.

\begin{teosin}
 There exists a basis $\{\sigma_{k,r}^\lambda\}$ of the space $\mathcal V_{\lambda,\C}^n$ of invariant valuations of $\CC{n}$ such that
\begin{equation}\label{eq:tube_intro}
\mathbf{T}_t\sigma_{k,r}^{\lambda}=  \sum_{j = 0}^{n}\phi_{2n-4r,k-2r,j}^\lambda(t)\sigma_{j+2r,r}^{\lambda},
\end{equation}
where 
\[
\phi_{m,k,j}^\lambda(t) = \sum_{h \geq 0}(-\lambda)^{j-h}\binom{m-j}{k-h}\binom{j}{h}\sin_\lambda^{k+j-2h}(t)\cos_\lambda^{m-k-j+2h}(t).
\]
\end{teosin}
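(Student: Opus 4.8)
The plan is to leverage the intertwining relation \eqref{eq:intertwines_intro} to transport the whole problem to the abstract $\mathfrak{sl}_2$-module $\Val^{U(n)}$, where the tube operator becomes the flow of a linear combination of the Bernig--Fu operators. First I would recall the decomposition $\Val^{U(n)}=\bigoplus_{r} M_r$ into irreducible $\mathfrak{sl}_2$-submodules, where on each isotypic piece the operators $\Lambda$ and $L$ act in the standard way on a finite-dimensional highest-weight module; concretely, there is a basis of $M_r$ indexed by degree so that $\Lambda$ lowers degree by one and $L$ raises it by one, with the usual weight-dependent coefficients. Under the isomorphism $\Phi_\lambda$, the operator $\partial|_{\mathcal V_{\lambda,\C}^n}$ becomes $\Lambda-\lambda L$ on each $M_r$. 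Since $\mathbf T_t=\exp(t\,\partial)$ (this is the content of the general framework: $\partial$ generates the flow $\mathbf T_t$, because $\mathbf T_t$ satisfies the semigroup property and $\frac{d}{dt}\big|_{t=0}\mathbf T_t=\partial$), computing $\mathbf T_t\sigma_{k,r}^\lambda$ amounts to exponentiating the operator $\Lambda-\lambda L$ restricted to the single irreducible block $M_r$, and then applying $\Phi_\lambda$ to translate back. The basis $\{\sigma_{k,r}^\lambda\}$ is defined as the $\Phi_\lambda$-image of the chosen basis of $\Val^{U(n)}$.

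Next I would set up and solve the resulting Cauchy problem inside one irreducible module. On $M_r$, which after reindexing (degrees shifted by $2r$, total "length" $2n-4r$) looks like the space of polynomials of bounded degree with $\Lambda,L$ acting as weighted shift operators, the vector-valued ODE $\frac{d}{dt}\Psi_t=(\Lambda-\lambda L)\Psi_t$, $\Psi_0=e_k$, has an explicit solution. The cleanest route is to note that $\Lambda-\lambda L$ acts on a suitable model — say on functions of a single variable where $\Lambda=\frac{d}{dx}$-type and $L=$ multiplication-type, or better, where the combined operator is conjugate to a first-order operator whose flow is a substitution $x\mapsto$ (some trigonometric expression in $t$). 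Indeed the functions $\sin_\lambda,\cos_\lambda$ appearing in the statement satisfy $\sin_\lambda'=\cos_\lambda$, $\cos_\lambda'=-\lambda\sin_\lambda$, exactly mirroring the $\Lambda$ vs. $-\lambda L$ dichotomy, which strongly suggests that in the right coordinates the flow of $\Lambda-\lambda L$ is the pullback by $(\sin_\lambda(t),\cos_\lambda(t))$ acting on a two-variable (bihomogeneous) polynomial model. I would verify that the matrix coefficients of $\exp(t(\Lambda-\lambda L))$ in the standard basis are precisely the $\phi_{m,k,j}^\lambda(t)$; the double-sum-with-two-binomials shape is exactly what one gets by expanding $(\alpha \sin_\lambda+\beta\cos_\lambda)^{\,k}(\gamma\sin_\lambda+\delta\cos_\lambda)^{\,m-k}$-type expressions and collecting powers, so the identification reduces to a binomial-coefficient bookkeeping check, which I would state as a lemma and relegate to a short computation.

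Finally I would assemble the pieces: because $\Phi_\lambda$ is a linear isomorphism intertwining $\partial$ with $\Lambda-\lambda L$ and the decomposition $\Val^{U(n)}=\bigoplus_r M_r$ is preserved, $\mathbf T_t=\Phi_\lambda\circ\exp(t(\Lambda-\lambda L))\circ\Phi_\lambda^{-1}$ acts block-diagonally with respect to $r$, so $\mathbf T_t\sigma_{k,r}^\lambda$ is a combination only of the $\sigma_{j+2r,r}^\lambda$, with coefficients the entries of $\exp(t(\Lambda-\lambda L))|_{M_r}$ computed in the previous step — giving exactly \eqref{eq:tube_intro}. One should also check the base case $t=0$ (the $\phi$ reduce to $\delta_{k,j}$, which forces $h=j$ and $k=j$, consistent with $\sin_\lambda(0)=0$, $\cos_\lambda(0)=1$) and confirm that the three sign/curvature regimes $\lambda>0,\lambda=0,\lambda<0$ are handled uniformly by the definition of $\sin_\lambda,\cos_\lambda$.

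I expect the main obstacle to be the explicit diagonalization/exponentiation of $\Lambda-\lambda L$ on each irreducible block in a form that makes the double-binomial closed form \eqref{eq:tube_intro} fall out cleanly rather than as an opaque matrix exponential. The $\mathfrak{sl}_2$ structure guarantees a solution in principle, but matching it to the specific combinatorial expression $\phi_{m,k,j}^\lambda$ — in particular getting the two independent binomial factors $\binom{m-j}{k-h}$ and $\binom{j}{h}$ and the alternating sign $(-\lambda)^{j-h}$ to appear with the correct ranges — will require choosing the right model for $M_r$ (I anticipate a bigraded polynomial model on which $\Lambda$ and $L$ act as explicit differential/multiplication operators and the flow is an honest change of variables driven by $\sin_\lambda,\cos_\lambda$) and carefully tracking the normalization constants of the Bernig--Fu basis through $\Phi_\lambda$.
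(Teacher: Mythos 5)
Your proposal is correct and matches the paper's approach essentially step for step: the paper realizes each irreducible block $V^{(m)}$ as the degree-$m$ homogeneous part of $\C[x,y]$ with $\Lambda,L$ given by $Y=y\partial_x$, $X=x\partial_y$, and the key lemma (which you correctly anticipate as "the flow is an honest change of variables") is that $Y_\lambda=Y-\lambda X$ is a derivation, so $\exp(tY_\lambda)$ is the algebra automorphism $x\mapsto x\cos_\lambda(t)+y\sin_\lambda(t)$, $y\mapsto y\cos_\lambda(t)-\lambda x\sin_\lambda(t)$. Expanding $(x\cos_\lambda+y\sin_\lambda)^k(y\cos_\lambda-\lambda x\sin_\lambda)^{m-k}$ and collecting powers of $x,y$ yields exactly the coefficients $\phi_{m,k,j}^\lambda$, which is the "binomial-coefficient bookkeeping" you flagged as the remaining work.
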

We describe the basis $\sigma_{k,r}^\lambda$ explicitly in terms of the previously known valuations $\tau_{k,p}^\lambda$ of \cite{bfs}. The tube formulas for the $\tau_{k,p}^\lambda$ can be easily obtained from the previous ones, as we also provide the expression of these valuations in terms of the $\sigma_{k,r}^\lambda$.

\medskip
Curiously, the expressions \eqref{eq:tube_intro} are extremely similar to those obtained by Santaló in the real space form $\RR{m}$ of constant curvature $\lambda$. Indeed, for a certain basis $\{\sigma_i\}_{i=0}^m$ of the space $\mathcal V_{\lambda,\R}^m$ of isometry invariant valuations of $\RR{m}$ one has
\[
\mathbf{T}_t\sigma_i^{\lambda}= \sum_{j = 0}^{m-1}\phi^\lambda_{m-1,i,j}(t)\sigma^\lambda_{j}, \quad 0 \leq i \leq m-1.
\]
The tube formula for $\sigma_m=\nm{vol}$ is however quite different. As an explanation for these similarities, we show in Theorem \ref{teo:formulasor_real} the existence of a phenomenon similar (but not completely analogous) to \eqref{eq:intertwines_intro}.

The paper concludes with a detailed study of the spectrum and the eigenspaces of the derivative operator $\partial$ in $\mathcal V_{\lambda,\C}^n$ and $\mathcal V_{\lambda,\R}^m$. In particular, we compute the kernel of $\partial$ in $\mathcal V_{\lambda,\C}^n$; i.e. we determine the invariant valuations of $\C P_\lambda^n$ for which the tube formulas are constant. We also identify the images $\partial(\mathcal V_{\lambda,\C}^n)$ and $\partial(\mathcal V_{\lambda,\R}^m)$, and we compute the preimage by $\partial$ of any element belonging to these subspaces.

\section{Background}
\subsection{Valuations}
Let $V$ be a finite-dimensional real vector space, and let $\mathcal{K}(V)$ be the space of convex compact subsets of $V$, endowed with the Hausdorff metric. A \emph{valuation} on $V$ is a map $\varphi\colon \mathcal K (V) \to \C$ such that
\[
\varphi(A \cup B)=\varphi(A)+\varphi(B)-\varphi(A \cap B),
\]
for $A,B,A\cup B\in \mathcal{K}(V)$. The space of translation-invariant, continuous valuations on $V$ is denoted by $\Val(V)$.

The notion of valuation was extended to smooth manifolds by Alesker (cf. \cite{AleskerI,AleskerII, AleskerIII,Aleskersurvey}). For simplicity we will focus  on the case of a riemannian manifold $M^n$. 
It is also natural to consider here the class of compact sets of positive reach in $M$, which we denote $\mathcal R(M)$. The definition and some basic properties of such sets are recalled in subsection \ref{subsec:reach}.

Let $SM$ be the sphere bundle of $M$ consisiting of unit tangent vectors, and let $\pi\colon SM\to M$ be the canonical projection. 
\begin{defi}[Smooth valuation]
A \emph{smooth valuation} on $M$ is a functional $\varphi\colon\mathcal{R}(M) \to \C$ of the form
\[
\varphi(A)= \int_{N(A)}\omega + \int_{A}\eta,
\]where $\omega \in \Omega^{n-1}(SM)$ and $\eta \in \Omega^{n}(M)$, are complex-valued differential forms, and $N(A)$ is the normal cycle of $A$ (cf. e.g. \cite{FuSubAn}). We will denote $\varphi=\llbracket \omega,\eta\rrbracket$ in this case. For any subgroup $G \leq \nm{Diff}(M)$, we  will denote by $\mathcal{V}^{G}(M)$ the space of $G$-invariant valuations; i.e. $\mu \in \mathcal{V}(M)$ such that $\mu(gA) = \mu(A)$ for all $A \in \mathcal{R}(M)$ and $g \in G$.
\end{defi}

The kernel of the map $(\omega,\eta)\mapsto \llbracket \omega,\eta\rrbracket$ was determined by Bernig and Br\"ocker in \cite{BernigBroker} as follows. 
Given $\omega \in \Omega^{n-1}(SM)$, there exists $\xi \in \Omega^{n-2}(SM)$ such that
\[
D\omega := d(\omega + \alpha \wedge \xi),
\]{is a multiple of} $\alpha$, the canonical contact form on $SM$.
The unique $n$-form $D\omega$ satisfying this condition is called the \emph{Rumin differential} of $\omega$. Then $\llbracket \omega,\eta \rrbracket = 0$ if and only if
\begin{equation}\label{eq:kernel} 
D\omega + \pi^* \eta = 0, \qquad \mbox{and}\qquad\int_{S_xM}\omega = 0,\quad \forall x \in M. 
\end{equation}

One of the most striking aspects of Alesker's theory of valuations on manifolds is the existence of a natural product on $\mathcal{V}(M)$, which turns this space into an algebra  with $\chi$ as the unit element.  The realization by Fu that this product is closely tied to  kinematic formulas opened the door to the recent development of  integral geometry in several spaces, including the complex space forms \cite{JuditGil,BH,bfs}. \\

Another important algebraic structure is the \emph{convolution of valuations} found by Bernig and Fu in linear spaces (cf. \cite{convolutionconvex}, but also  \cite{convolutionmanifolds}). This is a product on the dense subspace $\Val^\infty(V):=\Val(V)\cap \mathcal V(V)$ characterized as follows. Given $A \in \mathcal{K}(V)$,  with smooth and  positively curved boundary, we have $\mu_A(\cdot) := \nm{vol}(\cdot + A)\in\Val^\infty(V)$. The convolution is determined by  
\begin{equation}\label{eq:convo}
\mu_A * \varphi := \varphi(\cdot + A),\qquad \varphi\in\Val^{\infty}(V),
\end{equation}where $+$ refers to the Minkowski sum. 
In particular, $\nm{vol}$ is the unit element of this operation.

\subsection{Real space forms}

The fundamental examples of  valuations in Euclidean space $\R^m$ are the {\em intrinsic volumes} $\mu_k$. These are implicitly defined by the Steiner formula
\begin{equation}\label{eq:steiner}
\mathrm{vol}_{\R^m}(A + t\mathbb{B}^m) = \sum_{k=0}^m t^{m-k} \omega_{m-k} \mu_k(A), \quad A \in \mathcal{K}(\R^m),
\end{equation}
where $\mathbb{B}^m$ in the unit ball and $\omega_i$ is the volume of the $i$-dimensional unit ball. In particular $\mu_0 = \chi$, $\mu_{m-1} = 2\,\mathrm{perimeter}$, and $\mu_n = \mathrm{vol}_{m}$ are intrinsic volumes.

We will denote by $\RR{m}$ the $m$-dimensional complete and simply connected riemannian manifold of constant curvature $\lambda$. That is, the sphere $S^m(\sqrt{\lambda})$ for $\lambda > 0$, Euclidean space $\mathbb{R}^n$ for $\lambda = 0$, and  hyperbolic space $H^m(\sqrt{-\lambda})$ for $\lambda < 0$. Let $G_{\lambda,\mathbb{R}}$ be the group of {orientation preserving} isometries of $\RR{m}$; i.e. $G_{\lambda,\mathbb{R}}\cong SO(m+1)$ for $\lambda>0$, and $G_{\lambda,\R} \cong SO(m) \rtimes \R^m$ for $\lambda = 0$, while $G_{\lambda,\mathbb{R}}\cong PSO(m,1)$ for $\lambda < 0$. 
We will denote by $\mathcal{V}_{\lambda,\mathbb{R}}^m$ the space of $G_{\lambda,\mathbb{R}}$-invariant valuations of $\mathbb{S}_\lambda^m$.

Let $\kappa_0,\dots,\kappa_{m-1}\in \Omega^{m-1}(S\RR{m})^{{{G}}_{\lambda,\R}}$ be the differential forms defined in \cite[ \S0.4.4]{FuK}.  In the same paper it was shown  that the $\R$-algebra of ${G}_{\lambda,\R}$-invariant differential forms is generated by $\kappa_0,\dots,\kappa_{m-1},\alpha,d\alpha$. It follows by \cite[Prop. 2.6]{bfs} that the following valuations constitute a basis of $\mathcal{V}_{\lambda,\R}^{m}$
\begin{align*}
\sigma_i^{\lambda} &:= \llbracket \kappa_i,0\rrbracket,\qquad 0 \leq i \leq m-1\\
\sigma_m^{\lambda} &:= \nm{vol}_{\RR{m}}.
\end{align*}
In euclidean space $\R^m$ these valuations are proportional to the intrinsic volumes:
\[
\sigma_i^\lambda = {(m-i)\omega_{m-i}}\mu_i,\qquad \lambda=0.
\]
For general $\lambda$, the $\sigma_i^\lambda$ are proportional to the valuations $\tau_i^\lambda$ appearing  in \cite{libroazul,riemcurva}
\begin{align}\label{eq:sigmatau}
\sigma_i^\lambda&=\dfrac{\pi^i(m-i)\omega_{m-i}}{i! \omega_i}\tau_i^{\lambda} , \quad 0 \leq i \leq m-1,\\
\sigma_m^{\lambda}&=\frac{\pi^m}{m!\omega_m}\tau_m^\lambda.\label{eq:sigmatauvol}
\end{align}

As we will see, the normalization taken for the $\sigma_i^\lambda$ makes the tube formulas in $\mathcal V_{\lambda,\R}^m$ specially simple. A stronger reason in favor of this normalization is Theorem \ref{teo:formulasor_real}.

\subsection{Complex space forms}
We denote by $\CC{n}$ the complete, simply connected $n$-dimensional Kähler manifold of constant holomorphic curvature $4\lambda$; i.e. the complex projective space (with the suitably normalized Fubini-Study metric) for $\lambda > 0$, the complex euclidean space $\C^n$ for $\lambda = 0$, and the complex hyperbolic space for $\lambda < 0$. For $\lambda\neq 0$ we let $G_{\lambda,\C}$ be the full isometry group of $\CC{n}$.
For $\lambda=0$ we put $G_{\lambda,\C} = U(n) \rtimes \C^n$. 
We denote by  $\mathcal{V}_{\lambda,\C}^n$ the space of $G_{\lambda,\C}$-invariant valuations on $\CC{n}$. 

Let $\{\beta_{k,q},\gamma_{k,q}\} \subset \Omega^{2n-1}(S\CC{n})^{G_{\lambda,\C}}$ be the differential forms introduced in \cite{BH} for $\lambda=0$, and extended to the curved case $\lambda\neq 0$ in \cite{bfs}. Let also
\begin{align}
&\mu_{k,q}^\lambda := \llbracket \beta_{k,q},0\rrbracket, \quad k>2 q, \label{eq:defmukq} \\
&\mu_{2q,q}^\lambda := \sum_{i = 0}^{n-q-1}\left(\dfrac{\lambda}{\pi}\right)^i \dfrac{(q+i)!}{q!}\llbracket\gamma_{2q+2i,q+i},0\rrbracket + \left(\dfrac{\lambda}{\pi}\right)^{n-q} \dfrac{n!}{q!}\llbracket0,d\mathrm{vol}\rrbracket\label{eq:defmu2qq}
\end{align}where $d\mathrm{vol}$ is the riemannian volume element.
It was shown in \cite{BH,bfs} that these valuations $\mu_{k,q}^\lambda$ with $\max \{0, k-n\} \leq q \leq \frac{k}{2} \leq n$ consitute  a basis of $\mathcal{V}^{n}_{\lambda,\C}$.
It is convenient to emphasize that the $\mu_{k,q}^\lambda$ do \emph{not} coincide with the \emph{hermitian intrinsic volumes}  $\mu_{k,q}^M$ for $M=\CC{n}$ introduced in \cite{Kahlervolume}.

For $\lambda=0$ we simply write $\mu_{k,q}$ instead of $\mu_{k,q}^0$. We will also use the so-called \emph{Tasaki valuations} 
\[
\tau_{k,q}:=\sum_{i=q}^{\lfloor k / 2\rfloor}\binom{i}{q} \mu_{k,i}, \qquad 0, k-n \leq q \leq \frac{k}{2} \leq n.
\]

It will be useful to consider the following linear isomorphisms:
\begin{align*}
\mathcal F_{\lambda,\C}\colon \Val^{U(n)}\hasta \mathcal V_{\lambda,\C}^n,&\quad  \mathcal F_{\lambda,\C}(\mu_{k,q})=\mu_{k,q}^\lambda.
\end{align*}

More generally, whenever we have a valuation $\nu$ in $\Val^{U(n)}$ we will denote $\nu^\lambda := \mathcal{F}_{\lambda,\C}(\nu)$. For instance $\tau_{k,q}^\lambda=\mathcal{F_{\lambda,\C}}(\tau_{k,q})$.

\section{Tube formulas in linear spaces}

Let $V$ be an $m$-dimensional euclidean vector space. Given $t\geq 0$, let $\mathbf{T}_t\colon \Val(V)\to \Val(V)$ be given by 
\begin{equation}\label{eq:def_T_t}
(\mathbf{T}_t\mu)(A)=\mu(A + t\mathbb{B}^m)=(\mu_{t\mathbb{B}^m}*\mu)(A)\qquad A\in\mathcal K(V),
\end{equation}
where $\mathbb{B}^m$ is the unit ball.  We will call $\mathbf{T}_t$ the {\em tubular operator}. Let also $\partial\: \Val(V) \to \Val(V)$ be the operator given by
\begin{equation}
    \partial \mu = \left.\dfrac{d}{dt}\right|_{t = 0}\mathbf{T}_t\mu.
\end{equation}
This operator has sometimes been denoted by $\Lambda$ in the literature, but following \cite{BH} we reserve the symbol $\Lambda$ for a certain normalization of $\partial$ (see \eqref{eq:def_Lambda_L_H}).

The properties of the Minkowski sum ensure that $\mathbf{T}_{t+s} = \mathbf{T}_t \circ \mathbf{T}_s=\mathbf{T}_s \circ \mathbf{T}_t$. Differentating with respect to $s$ at zero yields
\begin{equation}\label{eq:problemacauchyplano}
    \dfrac{d}{dt}\mathbf{T}_t\mu= \mathbf{T}_t\partial\mu = \partial \mathbf{T}_t\mu.
\end{equation}
It follows that
\begin{align}\label{eq:partialiderivada}
    \partial^i\mu = \left.\dfrac{d^i}{dt^i}\right|_{t = 0}\mathbf{T}_t\mu.
\end{align}
For each $\mu \in \Val(V)$, the map $t \mapsto \mathbf{T}_t\mu$ is a polynomial in $t$ of degree $m$ by \eqref{eq:def_T_t} and the Steiner formula \eqref{eq:steiner} (or by \cite{mcmullen}). Hence 
\begin{align}\label{eq:tubularplanotaylor2}
\mathbf{T}_t\mu &= \sum_{i = 0}^m \dfrac{t^i}{i!}\left.\dfrac{d^i}{dt^i}\right|_{t = 0}\mathbf{T}_t\mu\\
\label{eq:tubularplanotaylor}
&= \sum_{i = 0}^m \dfrac{t^i}{i!}\partial^i \mu.
\end{align}
Note also that, by \eqref{eq:partialiderivada} and \eqref{eq:tubularplanotaylor2}, the derivative operator $\partial$  is $(m+1)$-nilpotent; i.e. $\partial^{m+1} = 0$.

Let us compute the tube formula for the intrinsic volume $\mu_i$ for each $0 \leq i \leq m$ using \eqref{eq:tubularplanotaylor}. For that purpose we first compute $\partial$. Since $\mathbf{T}_{t+s} = \mathbf{T}_s\circ \mathbf{T}_t$ we have\[
\mathbf T_{t+s}\nm{vol} = \sum_{j = 0}^m \omega_{m-j}t^{m-j}\mathbf T_s\mu_j.
\]
On the other hand
\[
\mathbf T_{t+s}\nm{vol} = \sum_{j = 0}^m \omega_{m-j}(t+s)^{m-j}\mu_j,
\]
Differentiating at $s=0$ and comparing coefficients yields
\[
\partial\mu_j = \dfrac{\omega_{m-j+1}}{\omega_{m-j}}(m-j+1)\mu_{j-1}.
\]
Finally, using \eqref{eq:tubularplanotaylor}, we get
\begin{align*}
\mathbf{T}_t\mu_k & = \sum_{i = 0}^m \frac{t^i}{i!}\partial^i\mu_k =\sum_{i = 0}^k \frac{t^i}{i!}\frac{\omega_{m-k+i}}{\omega_{m-k}}\frac{(m-k+i)!}{(m-k)!}\mu_{k-i}\\
&= \sum_{j=0}^k \binom{m-j}{k-j}\frac{\omega_{m-j}}{\omega_{m-k}} t^{k-j}\mu_j,
\end{align*}
which is \eqref{eq:steiner_sup}.

\bigskip
In order to compute the tube formulas for invariant valuations in $\C^n$ (i.e. to determine $\mathbf T_t$ on $\Val^{U(n)}$), it will be useful to recall the $\mathfrak{sl}_2$-module structure of $\Val^{U(n)}$ found in \cite{BH}.
Consider the linear maps $\Lambda, L, H\colon\Val(V)\to\Val(V)$, defined as follows
\begin{equation}\label{eq:def_Lambda_L_H}
\Lambda \nu := \dfrac{\omega_{m-k}}{\omega_{m-k+1}}\derivada{}\nu,\qquad L \nu := \dfrac{2\omega_k}{\omega_{k+1}} \mu_1 \cdot \nu,\qquad H\nu=(2k-m)\nu,\qquad  \nu\in\Val_k(V),
\end{equation}where $\cdot$ refers to the Alesker product.

\begin{prop}\label{propformulas}
On $\Val^{O(m)}$ the operators $\Lambda,L$ are given by
\begin{align}
&L \mu_k = (k+1)\mu_{k+1},\label{eq:Lmu} \\
&\Lambda \mu_k = (m-k+1)\mu_{k-1}\label{eq:Lambdamu},
\end{align}
while on $\Val^{U(n)}$ one has
\begin{align}
 &L \mu_{k, p} =(k-2 q+1) \mu_{k+1, q}+2(q+1) \mu_{k+1, q+1}\label{eq:L_mu} \\
&\Lambda_{} \mu_{k, p}= (k-2 q+1) \mu_{k-1, q-1}+2(n-k+q+1) \mu_{k-1, q},\label{eq:Lambda_mu}
\end{align}
which implies
\begin{align}
&L\tau_{k,q} = (k-2q+1)\tau_{k+1,q}\label{eq:Ltau}\\
&\Lambda\tau_{k,q} = (k-2 q+1) \tau_{k-1, q-1}+(2 n-2 q-k+1) \tau_{k-1, q}
\end{align}
\end{prop}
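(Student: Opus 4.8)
The plan is to verify the four formulas \eqref{eq:L_mu}, \eqref{eq:Lambda_mu}, \eqref{eq:Ltau} and the $\Lambda\tau_{k,q}$ formula directly, with the $O(m)$-case \eqref{eq:Lmu}, \eqref{eq:Lambdamu} being an immediate specialization. I would begin with $\Lambda$, since $\Lambda\mu_{k,p}$ is by definition a normalization of $\partial\mu_{k,p}=\tfrac{d}{dt}\big|_{t=0}\mathbf T_t\mu_{k,p}$, and the tube formula for $\mathbf T_t\mu_{k,p}$ in $\C^n$ (the Steiner-type formula for the hermitian-type invariant valuations $\mu_{k,q}$) is already known from \cite{BH}. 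Differentiating that polynomial in $t$ at $t=0$ and reading off the linear term gives $\partial\mu_{k,p}$; multiplying by the constant $\omega_{m-k}/\omega_{m-k+1}$ (here $m=2n$) and comparing with the claimed right-hand side $(k-2q+1)\mu_{k-1,q-1}+2(n-k+q+1)\mu_{k-1,q}$ is then a bookkeeping check of binomial coefficients. Alternatively, one can use the convolution description $\partial\mu=\tfrac{d}{dt}\big|_{t=0}(\mu_{t\mathbb B}*\mu)$ together with the known action of convolution by the ball on the $\mu_{k,q}$; either way the input is classical and the work is combinatorial.

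For $L$, the cleanest route is to use that $L$ and $\Lambda$ are adjoint (up to the known pairing) with respect to the Poincar\'e/Alesker duality on $\Val^{U(n)}$, or equivalently to invoke the Hard Lefschetz package from \cite{BH}: once $\Lambda$ is pinned down and $H$ acts as stated, the $\mathfrak{sl}_2$-relations $[\Lambda,L]=H$, $[H,L]=2L$, $[H,\Lambda]=-2\Lambda$ together with the known decomposition of $\Val^{U(n)}$ into irreducible $\mathfrak{sl}_2$-modules determine $L$ on each weight vector up to scalars, and matching one normalization fixes everything. If one prefers a direct argument, $L\nu=\tfrac{2\omega_k}{\omega_{k+1}}\mu_1\cdot\nu$ can be computed from the Alesker product $\mu_1\cdot\mu_{k,q}$, whose structure constants in the $U(n)$-setting are recorded in \cite{BH}; again the claimed coefficients $(k-2q+1)$ and $2(q+1)$ emerge after simplifying $\omega$-ratios.

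Finally, the two formulas for $\tau_{k,q}$ follow formally from the definition $\tau_{k,q}=\sum_{i\ge q}\binom{i}{q}\mu_{k,i}$ by substituting \eqref{eq:L_mu} and \eqref{eq:Lambda_mu} and resumming. For instance $L\tau_{k,q}=\sum_i\binom{i}{q}\big((k-2i+1)\mu_{k+1,i}+2(i+1)\mu_{k+1,i+1}\big)$; shifting the index in the second sum and using the Pascal-type identity $\binom{i}{q}(k-2i+1)+2i\binom{i-1}{q}=(k-2q+1)\binom{i}{q}$ collapses this to $(k-2q+1)\tau_{k+1,q}$, and the analogous manipulation handles $\Lambda\tau_{k,q}$. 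I expect the main obstacle to be purely computational: correctly tracking the ratios of ball volumes $\omega_j$ (which behave differently in even and odd dimension) through the normalizations in \eqref{eq:def_Lambda_L_H}, and verifying the binomial identities in the $\tau$-resummation; conceptually nothing deep is needed beyond the already-cited results of Bernig--Fu.
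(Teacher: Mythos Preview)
Your proposal is essentially sound, but it is far more elaborate than what the paper actually does: the paper's proof is a two-line citation, quoting \cite[eqs.\ (2.3.12), (2.3.13)]{libroazul} for \eqref{eq:Lmu}--\eqref{eq:Lambdamu} and \cite[Lemma 5.2]{BH} for \eqref{eq:L_mu}--\eqref{eq:Ltau} and the $\Lambda\tau_{k,q}$ formula. All six identities are already established in the literature and are simply imported.

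A couple of comments on your rederivation. Your resummation argument for the $\tau_{k,q}$ formulas is correct (the Pascal-type identity you write down checks out), and this is indeed how \cite{BH} passes from the $\mu_{k,q}$ to the $\tau_{k,q}$. However, your first route to $\Lambda\mu_{k,q}$ --- differentiating a pre-existing Steiner-type tube formula for $\mu_{k,q}$ ``already known from \cite{BH}'' --- inverts the actual logical order in that reference: in \cite{BH} the action of $\Lambda$ on $\mu_{k,q}$ is computed directly at the level of the defining differential forms $\beta_{k,q}$, and any tube formula would be a consequence, not an input. Your alternative via the Alesker product $\mu_1\cdot\mu_{k,q}$ (for $L$) and the convolution description (for $\Lambda$) is the non-circular path and is effectively what \cite{BH} does. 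Finally, the $O(m)$ case is not literally a specialization of the $U(n)$ case (the ambient dimensions differ); for even $m=2n$ one can recover it via $\mu_k=\tau_{k,0}$, but for general $m$ the formulas come directly from the Steiner formula and the known product $\mu_1\cdot\mu_k$, as in \cite{libroazul}.
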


\begin{proof}
    The first two equalities are \cite[eqs. (2.3.12) and (2.3.13)]{libroazul}. The rest is  \cite[Lemma 5.2]{BH}.
\end{proof}

\begin{prop}[{{\cite[Prop. 2.3.10 (3)]{libroazul}}}]
The operators $\Lambda, L,H$ define an $\mathfrak{sl}_2$-module structure on both $\Val^{O(m)}$ and $\Val^{U(n)}$; i.e. $[L,\Lambda]=H$, $[H,L]=2L, [H,\Lambda]=-2\Lambda$.
\end{prop}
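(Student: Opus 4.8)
The plan is to verify the three bracket relations $[L,\Lambda]=H$, $[H,L]=2L$, $[H,\Lambda]=-2\Lambda$ directly from the explicit formulas for $\Lambda$, $L$, $H$ on the given bases. The relations involving $H$ are essentially bookkeeping: since $H$ acts on $\Val_k(V)$ by the scalar $2k-m$, while $L$ raises degree by one and $\Lambda$ lowers it by one, one computes on a degree-$k$ valuation $\nu$ that $HL\nu = (2(k+1)-m)L\nu$ and $LH\nu = (2k-m)L\nu$, so $[H,L]\nu = 2L\nu$; symmetrically $[H,\Lambda]\nu = -2\Lambda\nu$. These steps require no reference to the particular structure of $\Val^{O(m)}$ or $\Val^{U(n)}$ beyond the degree grading, so they can be dispatched in one line.

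The substantive relation is $[L,\Lambda]=H$, and here I would treat the two cases separately using Proposition~\ref{propformulas}. In the real case, acting on $\mu_k\in\Val^{O(m)}$ one has $L\Lambda\mu_k = (m-k+1)L\mu_{k-1} = (m-k+1)k\,\mu_k$ by \eqref{eq:Lmu}--\eqref{eq:Lambdamu}, and $\Lambda L\mu_k = (k+1)\Lambda\mu_{k+1} = (k+1)(m-k)\mu_k$; subtracting gives $(k(m-k+1)-(k+1)(m-k))\mu_k = (2k-m)\mu_k = H\mu_k$. In the complex case one does the analogous computation on $\mu_{k,q}\in\Val^{U(n)}$ using \eqref{eq:L_mu}--\eqref{eq:Lambda_mu}: expand $L\Lambda\mu_{k,q}$ and $\Lambda L\mu_{k,q}$ as linear combinations of $\mu_{k,q-1}$, $\mu_{k,q}$, $\mu_{k,q+1}$ via the four-term recursions, and check that the off-diagonal coefficients cancel while the coefficient of $\mu_{k,q}$ comes out to $2k-m$ (here $m=2n$). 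It is slightly cleaner to run this verification on the Tasaki basis $\tau_{k,q}$ instead, where by \eqref{eq:Ltau} the operator $L$ acts without mixing the second index, so $[L,\Lambda]\tau_{k,q}$ reduces to a two-term rather than a three-term computation; since $\{\tau_{k,q}\}$ is also a basis, this suffices.

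The main obstacle, such as it is, lies in the complex case: one must carefully track the index shifts in the four-term recursions \eqref{eq:L_mu}--\eqref{eq:Lambda_mu} and confirm that the cross terms $\mu_{k,q-1}$ and $\mu_{k,q+1}$ appear with coefficient zero after taking the commutator. This is a finite, purely algebraic check, but it is the only place where an actual cancellation (rather than degree bookkeeping) occurs. Alternatively, one can simply cite \cite[Prop. 2.3.10 (3)]{libroazul} as the source, since the bracket relations for $\Lambda$, $L$, $H$ on $\Val$ (hence on any invariant subspace preserved by these operators) are established there; the only thing to note is that $\Val^{O(m)}$ and $\Val^{U(n)}$ are indeed invariant under $\Lambda$, $L$, and $H$, which is immediate from Proposition~\ref{propformulas}.
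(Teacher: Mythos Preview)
The paper gives no proof of this proposition at all: it is stated with the citation \cite[Prop.~2.3.10~(3)]{libroazul} and nothing else. Your proposal is therefore not the paper's approach but a self-contained replacement for the citation.

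That said, the verification you outline is correct. The bracket relations $[H,L]=2L$ and $[H,\Lambda]=-2\Lambda$ are indeed pure degree bookkeeping as you describe, and the computation of $[L,\Lambda]$ on the bases $\{\mu_k\}$ and $\{\mu_{k,q}\}$ via Proposition~\ref{propformulas} goes through: in the complex case the coefficients of $\mu_{k,q-1}$ and $\mu_{k,q+1}$ in $L\Lambda\mu_{k,q}-\Lambda L\mu_{k,q}$ cancel termwise, and the coefficient of $\mu_{k,q}$ reduces to $2k-2n$. Your remark that the Tasaki basis shortens the complex computation (since $L$ does not mix the second index there) is also correct and is a reasonable optimization. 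What your approach buys over the bare citation is that it makes the paper self-contained on this point and shows the reader that nothing beyond the explicit formulas already recorded in Proposition~\ref{propformulas} is needed; the price is a few lines of routine algebra. Your final paragraph, noting that one may simply cite \cite{libroazul} and observe that $\Val^{O(m)}$ and $\Val^{U(n)}$ are preserved by the three operators, is exactly what the paper does.
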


The decomposition into irreducible components is as follows
\begin{equation}\label{eq:sl2_decomposition}
{\Val^{O(m)}\cong V^{(m)}},\qquad \Val^{U(n)}\cong \bigoplus_{0 \leq 2r\leq n}V^{(2n-4r)}
\end{equation}where $V^{(m)}$ is the $(m+1)-$dimensional irreducible $\mathfrak{sl}_2$-representation.
In particular, for $0 \leq 2r \leq n$, there exists a unique, up to a multiplicative constant, primitive element  (i.e. anihilated by $\Lambda$) in each irreducible component of $\Val^{U(n)}$. By the so-called Lefschetz decomposition, the $L$-orbits of these primitive elements consitute a basis of $\Val^{U(n)}$. This basis was explictly computed in \cite{BH} as follows.

\begin{prop}[{{\cite[eq.\,(76)]{BH}}}]\label{prop:primitives}
The following valuations 
\begin{align}\label{eq:pi2rr}
\pi_{2r,r} &:= (-1)^{r}(2 n-4 r+1) ! ! \sum_{i=0}^{r}(-1)^{i} \frac{(2 r-2 i-1) ! !}{(2 n-2 r-2 i+1) ! !} \tau_{2 r, i}, & 0 \leq 2r \leq n,
\end{align}are $\Lambda$-primitive; i.e. $\Lambda\pi_{2r,r}=0$. The family
\begin{align}
\pi_{k, r} &:=L^{k-2 r} \pi_{2 r, r} \\
&=(-1)^{r}(2 n-4 r+1) ! ! \sum_{i=0}^{r}(-1)^{i} \frac{(k-2 i) !}{(2 r-2 i) !} \frac{(2 r-2 i-1) ! !}{(2 n-2 r-2 i+1) ! !} \tau_{k, i}, & 2r \leq k \leq 2n-2r \label{eq:pikrtasakis}
\end{align}
forms a basis of $\nm{Val}^{U(n)}$.
\end{prop}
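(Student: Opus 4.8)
The plan is to verify the three assertions — the $\Lambda$-primitivity of $\pi_{2r,r}$, the closed form of $\pi_{k,r}=L^{k-2r}\pi_{2r,r}$, and the basis property — directly from the action of $\Lambda$ and $L$ on the Tasaki basis recorded in Proposition~\ref{propformulas}, together with the $\mathfrak{sl}_2$-decomposition \eqref{eq:sl2_decomposition}. All three amount to bookkeeping with double factorials.

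For primitivity, I would apply $\Lambda$ term by term to \eqref{eq:pi2rr}, using $\Lambda\tau_{2r,i}=(2r-2i+1)\tau_{2r-1,i-1}+(2n-2i-2r+1)\tau_{2r-1,i}$, and then collect the coefficient of each $\tau_{2r-1,j}$. Such a coefficient receives exactly two contributions: one from the index $i=j+1$ through the $\tau_{2r-1,i-1}$ summand, and one from $i=j$ through the $\tau_{2r-1,i}$ summand. Using the elementary identities $(2r-2j-1)(2r-2j-3)!!=(2r-2j-1)!!$ and $(2n-2r-2j+1)(2n-2r-2j-1)!!=(2n-2r-2j+1)!!$, both contributions collapse to $\pm(2r-2j-1)!!/(2n-2r-2j-1)!!$ with opposite signs, hence cancel; the boundary terms ($i=0$ producing $\tau_{2r-1,-1}$, and $i=r$ in the second summand producing $\tau_{2r-1,r}$) are harmless since $\tau_{k,q}=0$ when $q<0$ or $q>\lfloor k/2\rfloor$. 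This yields $\Lambda\pi_{2r,r}=0$.

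For the closed formula I would iterate $L\tau_{k,q}=(k-2q+1)\tau_{k+1,q}$ from \eqref{eq:Ltau} to get $L^{k-2r}\tau_{2r,i}=\tfrac{(k-2i)!}{(2r-2i)!}\tau_{k,i}$ and substitute into \eqref{eq:pi2rr}, which gives \eqref{eq:pikrtasakis} on the nose. For the basis statement I would invoke \eqref{eq:sl2_decomposition}: the summand $V^{(2n-4r)}$ occurs with multiplicity one, so the space of $\Lambda$-primitive vectors of $H$-weight $4r-2n$ — necessarily contained in $\Val_{2r}^{U(n)}$ — is one-dimensional. Since $\pi_{2r,r}$ is such a primitive vector and is nonzero (the coefficient of $\tau_{2r,r}$ in \eqref{eq:pi2rr} equals $(-1)^r(2n-4r+1)!!\cdot(-1)^r(-1)!!/(2n-4r+1)!!=1$ and the $\tau_{k,q}$ are linearly independent), it spans that line; by the Lefschetz decomposition its $L$-orbit $\{\pi_{k,r}\}_{2r\le k\le 2n-2r}$ is a basis of the corresponding irreducible summand, and the union over $0\le 2r\le n$ is a basis of $\Val^{U(n)}$.

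The only genuinely delicate step I anticipate is the double-factorial cancellation in $\Lambda\pi_{2r,r}=0$, together with checking that the out-of-range boundary terms really vanish; the rest is immediate. Of course one could also simply cite \cite[eq.\,(76)]{BH}.
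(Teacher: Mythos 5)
Your verification is correct and complete: the double-factorial cancellation in $\Lambda\pi_{2r,r}$ works out exactly as you describe, the boundary terms are genuinely out of range (for instance $\tau_{2r-1,r}$ would need $r\le\lfloor(2r-1)/2\rfloor=r-1$, and $\tau_{2r-1,-1}$ is likewise outside the admissible window), the iteration $L^{k-2r}\tau_{2r,i}=\tfrac{(k-2i)!}{(2r-2i)!}\tau_{k,i}$ is immediate from \eqref{eq:Ltau}, and the nonvanishing of $\pi_{2r,r}$ via the unit coefficient of $\tau_{2r,r}$ together with multiplicity-freeness of \eqref{eq:sl2_decomposition} pins down the basis claim. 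Note, however, that the paper itself offers no proof of this proposition: the bracketed attribution in the statement is the proof, i.e.\ the result is simply imported from \cite[eq.\,(76)]{BH}. Your argument is therefore a genuinely different route. Where the paper delegates to Bernig and Fu, you reconstruct the statement from the ingredients already on the page — the $\Lambda$- and $L$-action on the Tasaki valuations in Proposition \ref{propformulas} and the $\mathfrak{sl}_2$-decomposition \eqref{eq:sl2_decomposition}. What this buys is logical self-containment (the later uses of $\pi_{k,r}$, e.g.\ in Theorem \ref{teo:formulasplanas}, then rest entirely on facts proved or quoted with proof in this paper); the cost is the double-factorial bookkeeping that the citation sidesteps. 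Both are acceptable, and you rightly flag the citation as an alternative.
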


In particular the irreducible components of $\Val^{U(n)}$ are the following subspaces
\begin{equation}\label{eq:subespaciosinvariantesplanos}
\mathcal{I}_{0}^{n,r} := \suce{\pi_{k,r} : 2r \leq k \leq 2n-2r}, \quad 0 \leq 2r \leq n.
\end{equation}

We are now able to compute the tube formulas in the complex case using \eqref{eq:tubularplanotaylor}.

\begin{teo}\label{teo:formulasplanas}
\begin{align}\label{eq:tubo_pikr}
\mathbf{T}_t\pi_{k,r} &= \dfrac{(k-2r)!}{\omega_{2n-k}}\sum_{j = 0}^{k-2r} \binom{2n-4r-j}{k-2r-j}t^{k-2r-j}\dfrac{\omega_{2n-2r-j}}{j!}\pi_{j+2r,r}.
\end{align}
\end{teo}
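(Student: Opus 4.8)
The plan is to compute $\mathbf{T}_t$ on the basis $\{\pi_{k,r}\}$ via the Taylor expansion \eqref{eq:tubularplanotaylor}, $\mathbf{T}_t\mu=\sum_{i\geq0}\frac{t^i}{i!}\partial^i\mu$, so the problem reduces to evaluating $\partial^i\pi_{k,r}$. Since $\partial=\Lambda$ up to the normalization in \eqref{eq:def_Lambda_L_H}, namely $\partial\nu=\frac{\omega_{m-k+1}}{\omega_{m-k}}\Lambda\nu$ on $\Val_k$, and since each $\pi_{k,r}=L^{k-2r}\pi_{2r,r}$ lies in the irreducible $\mathfrak{sl}_2$-component $\mathcal I_0^{n,r}\cong V^{(2n-4r)}$, I would work inside a single irreducible module. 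There $\pi_{k,r}$ (for $2r\leq k\leq 2n-2r$) corresponds to $L^{k-2r}$ applied to the lowest weight vector, and the standard $\mathfrak{sl}_2$ identity $\Lambda L^{j}v_0 = j(2n-4r-j+1)L^{j-1}v_0$ for a primitive $v_0$ gives immediately $\Lambda\pi_{k,r}=(k-2r)(2n-k+1)\pi_{k-1,r}$. Iterating, $\Lambda^i\pi_{k,r}=\frac{(k-2r)!}{(k-2r-i)!}\cdot\frac{(2n-k+i)!}{(2n-k)!}\pi_{k-i,r}$ for $0\le i\le k-2r$ and $0$ otherwise.

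Next I would pass back from $\Lambda$ to $\partial$. Because $\pi_{k-i,r}$ has degree $k-i$ in $\C^n$ (ambient real dimension $m=2n$), the composite $\partial^i$ acting on the degree-$k$ valuation $\pi_{k,r}$ picks up the product of normalizing ratios $\prod_{\ell=0}^{i-1}\frac{\omega_{2n-(k-\ell)+1}}{\omega_{2n-(k-\ell)}}=\frac{\omega_{2n-k+1}\omega_{2n-k+2}\cdots\omega_{2n-k+i}}{\omega_{2n-k}\omega_{2n-k+1}\cdots\omega_{2n-k+i-1}}=\frac{\omega_{2n-k+i}}{\omega_{2n-k}}$, a telescoping product. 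Hence
\begin{align*}
\partial^i\pi_{k,r}&=\frac{\omega_{2n-k+i}}{\omega_{2n-k}}\cdot\frac{(k-2r)!}{(k-2r-i)!}\cdot\frac{(2n-k+i)!}{(2n-k)!}\pi_{k-i,r},
\end{align*}
valid for $0\le i\le k-2r$. Substituting into \eqref{eq:tubularplanotaylor} and reindexing by $j=k-2r-i$ (so $i=k-2r-j$), I would obtain
\begin{align*}
\mathbf{T}_t\pi_{k,r}&=\sum_{i=0}^{k-2r}\frac{t^i}{i!}\cdot\frac{\omega_{2n-k+i}}{\omega_{2n-k}}\cdot\frac{(k-2r)!}{(k-2r-i)!}\cdot\frac{(2n-k+i)!}{(2n-k)!}\,\pi_{k-i,r},
\end{align*}
and after the change of variable the factor $\frac{(2n-k+i)!}{i!(k-2r-i)!(2n-k)!}$ should be recognized as $\frac{1}{j!}\binom{2n-4r-j}{k-2r-j}$ (using $2n-k+i=2n-2r-j$ and $k-2r-i=j$), while $\omega_{2n-k+i}=\omega_{2n-2r-j}$, matching \eqref{eq:tubo_pikr} exactly.

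The only genuinely delicate points are bookkeeping ones: confirming the precise $\mathfrak{sl}_2$ eigenvalue $j(2n-4r-j+1)$ for $\Lambda L^j$ on the primitive vector of $V^{(2n-4r)}$ (which follows from $[L,\Lambda]=H$, $H\pi_{2r,r}=-(2n-4r)\pi_{2r,r}$ since $\pi_{2r,r}\in\Val_{2r}$ and $2\cdot 2r-2n=-(2n-4r)$, and an induction on $j$), and verifying the binomial identity that repackages the factorials and the $\omega$'s into the stated closed form. I do not expect any analytic difficulty; the hard part is purely the combinatorial simplification, and I would handle it by the reindexing $j=k-2r-i$ together with the telescoping of the $\omega$-ratios, both of which are routine once set up carefully.
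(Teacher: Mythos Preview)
Your strategy is exactly the paper's: expand $\mathbf{T}_t$ via \eqref{eq:tubularplanotaylor}, compute $\Lambda^i\pi_{k,r}$ using the $\mathfrak{sl}_2$ structure on the irreducible component $V^{(2n-4r)}$, convert $\Lambda$ to $\partial$ via the telescoping $\omega$-ratios, and reindex. There is, however, a systematic arithmetic slip. Substituting $j=k-2r$ into your own correctly stated identity $\Lambda L^{j}v_0=j(2n-4r-j+1)L^{j-1}v_0$ gives
\[
\Lambda\pi_{k,r}=(k-2r)(2n-k-2r+1)\,\pi_{k-1,r},
\]
not $(k-2r)(2n-k+1)\pi_{k-1,r}$; this is precisely the paper's \eqref{eq:Lambdapikrplana}. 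The missing $-2r$ then propagates: the iterated coefficient should be $\dfrac{(k-2r)!}{(k-2r-i)!}\cdot\dfrac{(2n-k-2r+i)!}{(2n-k-2r)!}$, and with that correction your reindexing $j=k-2r-i$ gives
\[
\frac{(2n-k-2r+i)!}{i!\,(k-2r-i)!\,(2n-k-2r)!}
=\frac{(2n-4r-j)!}{j!\,(k-2r-j)!\,(2n-k-2r)!}
=\frac{1}{j!}\binom{2n-4r-j}{k-2r-j},
\]
whereas with your factor $\frac{(2n-k+i)!}{(2n-k)!}$ the claimed binomial identity is false for $r\neq 0$. Once this slip is repaired the argument is complete and matches the paper's proof line for line.
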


\begin{proof}
By \cite[Lemma 5.6]{BH}, 
\begin{equation}\label{eq:Lambdapikrplana}
\Lambda \pi_{k,r} = (k-2r)(2n-k-2r+1)\pi_{k-1,r},
\end{equation}
and then
\begin{align}\label{eq:Lambdapikrplanai}
\Lambda^{i} \pi_{k,r} &= \dfrac{(k-2r)!(2n-k-2r+i)!}{(k-2r-i)!(2n-k-2r)!}\pi_{k-i,r}.
\end{align}
Using \eqref{eq:tubularplanotaylor}, we obtain the tube formula
\begin{align*}
    \mathbf{T}_t\pi_{k,r} &= \sum_{i = 0}^{2n} \dfrac{t^i}{i!} \frac{\omega_{2n-k+i}}{\omega_{2n-k}}\Lambda^i\pi_{k,r}\\& =\dfrac{(k-2r)!}{\omega_{2n-k}}\sum_{i = 0}^{k-2r} \dfrac{t^i}{i!}\omega_{2n-k+i}\dfrac{(2n-k-2r+i)!}{(k-2r-i)!(2n-k-2r)!}\pi_{k-i,r} \\
    &=\dfrac{(k-2r)!}{\omega_{2n-k}}\sum_{j = 0}^{k-2r} \binom{2n-4r-j}{k-2r-j}t^{k-2r-j}\dfrac{\omega_{2n-2r-j}}{j!}\pi_{j+2r,r}.\qedhere
\end{align*}
\end{proof}

These tube formulas can also be given in terms of the valuations $\tau_{k,q}$. To this end, we next compute their \emph{Lefschetz decomposition}. 

\begin{prop}
The Lefschetz decomposition of $\tau_{k,r}$ is given by
\begin{align}\label{eq:descLef}
\tau_{k,r} &= \frac{1}{(k-2r)!} \sum_{i = 0}^r \binom{n-2i}{r-i}\frac{(2n-2i-2r) !}{(2 n-4 i) !} \pi_{k,i}.
\end{align}
\end{prop}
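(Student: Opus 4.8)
The plan is to prove \eqref{eq:descLef} by inverting the relation \eqref{eq:pikrtasakis} expressing the primitives $\pi_{k,r}$ (equivalently $\pi_{k,i}$) in terms of the Tasaki valuations $\tau_{k,i}$. Since both families $\{\pi_{k,i}\}_{i=0}^{\lfloor k/2\rfloor}$ and $\{\tau_{k,i}\}_{i=0}^{\lfloor k/2\rfloor}$ are bases of the same graded piece of $\Val^{U(n)}$ (the degree-$k$ part), and the change of basis \eqref{eq:pikrtasakis} is lower-triangular in the index $i$ (note $\pi_{k,r}$ involves only $\tau_{k,i}$ with $i\le r$), the inverse is also triangular and can be computed explicitly. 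Concretely, write $\pi_{k,i} = \sum_{j\le i} a_{ij}\tau_{k,j}$ with $a_{ij}$ read off from \eqref{eq:pikrtasakis}, and seek coefficients $b_{ri}$ with $\tau_{k,r} = \sum_{i\le r} b_{ri}\pi_{k,i}$; the claim is $b_{ri} = \frac{1}{(k-2r)!}\binom{n-2i}{r-i}\frac{(2n-2i-2r)!}{(2n-4i)!}$.

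First I would fix $k$ and set up the triangular system $\sum_{i=j}^r b_{ri} a_{ij} = \delta_{rj}$ for $0\le j\le r\le \lfloor k/2\rfloor$. Using \eqref{eq:pikrtasakis}, $a_{ij} = (-1)^i (2n-4i+1)!!\,(-1)^j \frac{(k-2j)!}{(2i-2j)!}\frac{(2i-2j-1)!!}{(2n-2i-2j+1)!!}$. It is cleaner to eliminate the common factors: the dependence on $k$ enters $a_{ij}$ only through $(k-2j)!$, and the conjectured $b_{ri}$ contains the factor $\frac{1}{(k-2r)!}$, so after substituting and cancelling one is left with a $k$-independent combinatorial identity in $n,r,j$. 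I would then verify this identity, for instance by the standard device of checking that $\sum_{i} b_{ri} a_{ij}$ telescopes: after the cancellations the inner sum over $i$ from $j$ to $r$ becomes an alternating binomial sum of Vandermonde/Chu type, which evaluates to $0$ for $j<r$ and to $1$ for $j=r$. Alternatively one can induct on $r$, using the recursion that expresses $\tau_{k,r}$ in terms of $\tau_{k,r-1},\dots$ via the $\mu_{k,i}$'s and the definition of $\pi$'s.

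A cleaner conceptual route, which I would prefer if the bookkeeping in the direct approach gets heavy, is to avoid $k$ altogether by working with the primitivity and $L$-equivariance. Both sides of \eqref{eq:descLef} are of the form $L^{k-2r}(\text{something in degree }2r)$: indeed $\pi_{k,i} = L^{k-2i}\pi_{2i,i}$ by definition, and using \eqref{eq:Ltau} one has $L^{k-2r}\tau_{2r,r} = (\text{product of the }(j-2j'+1)\text{ factors})\,\tau_{k,r}$ up to the precise constant, so it suffices to prove the identity for $k=2r$, i.e. to show $\tau_{2r,r} = \frac{1}{0!}\sum_{i=0}^r \binom{n-2i}{r-i}\frac{(2n-2i-2r)!}{(2n-4i)!}\pi_{2r,i}$ after accounting for how $L^{k-2r}$ rescales each $\pi_{2i,i}$ to $\pi_{k,i}$; one must check the $L$-powers match, which they do because $L^{k-2r}\pi_{2i,i} = \pi_{2r+(k-2r),i}$ only when $2i\le 2r$, consistent with the range of summation. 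This reduces the problem to a single identity at the bottom degree.

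The main obstacle I anticipate is the double-factorial bookkeeping: the $a_{ij}$ and the target $b_{ri}$ both mix ordinary and double factorials, and one must convert everything to ordinary factorials (using $(2m-1)!! = \frac{(2m)!}{2^m m!}$) before the alternating sum reveals itself as a clean hypergeometric-type identity. Getting the powers of $2$ and the argument shifts exactly right is where errors creep in, so I would carefully track, after the reduction above, that the residual sum is $\sum_{i=j}^{r}(-1)^{i-j}\binom{r-j}{i-j}\binom{n-2i}{r-i}\frac{(2n-2i-2r)!\,(2i-2j)!!^{-1}\cdots}{\cdots}$, normalize it to a standard form such as $\sum_i (-1)^i \binom{a}{i}\binom{b-i}{c} = \binom{b-a}{c-a}(-1)^a$ (or a Saalschütz instance), and confirm it collapses to $\delta_{rj}$. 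Once that single combinatorial identity is in hand, the proposition follows immediately by triangular inversion.
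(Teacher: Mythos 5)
Your plan is genuinely different from the paper's and is valid in outline, but it leaves the essential verification undone. You correctly observe that \eqref{eq:pikrtasakis} is lower triangular in the second index with invertible diagonal entries $(k-2r)!$, so a formula of the shape \eqref{eq:descLef} must exist and can be found by triangular inversion; and your $L$-equivariance reduction to the case $k=2r$ is correct once a small slip is fixed (one applies $L^{k-2r}$ to $\pi_{2r,i}$, not to $\pi_{2i,i}$, obtaining $\pi_{k,i}$ on the right and $(k-2r)!\,\tau_{k,r}$ on the left via \eqref{eq:Ltau}). The gap is that the triangular system $\sum_{i=j}^{r}b_{ri}a_{ij}=\delta_{rj}$, equivalently the reduced $k=2r$ identity, is not actually proven: you predict it collapses to a Chu--Vandermonde or Saalsch\"utz instance, which is plausible, but no explicit identity is exhibited or verified, and the double-factorial cancellations you yourself flag as the danger zone are exactly where the remaining work lies.

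The paper's argument avoids this computation entirely by a representation-theoretic shortcut. It defines $\psi$ as the linear endomorphism sending $\tau_{k,r}$ to the right side of \eqref{eq:descLef} and shows that $\psi$ commutes with both $L$ (immediate, since $L\pi_{k,i}=\pi_{k+1,i}$ and $L\tau_{k,r}=(k-2r+1)\tau_{k+1,r}$) and $\Lambda$ (a short calculation that reduces to the elementary polynomial identity $2(r-i)(2n-2i-2r+1)+(k-2r)(2n-k-2r+1)=(k-2i)(2n-k-2i+1)$). Since $\Val^{U(n)}$ is multiplicity-free as an $\mathfrak{sl}_2$-module, Schur's lemma forces $\psi=c_r\,\mathrm{id}$ on each irreducible component, and checking $c_r=1$ is a one-line comparison of the leading coefficient. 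No binomial summation identity ever needs to be established. To complete your route you would have to actually produce and prove that identity; the Schur argument exists precisely to make that unnecessary.
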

\begin{proof}

Consider the linear map $\psi : \Val^{U(n)} \to \Val^{U(n)}$ mapping $\tau_{k,r}$ to the left hand side of \eqref{eq:descLef}.
We need to show that $\psi=\nm{id}$. Let us check that this endomorphism commutes with both  $\Lambda$ and $L$.
To check commutation with $\Lambda$, we only need to verify the following 
    \begin{align*}
    (k-2r)!\psi(\Lambda(\tau_{k,r}))  =&\sum_{i=0}^{r-1} \frac{(n-2i) !(2n-2i-2r+2)!}{(r-i-1) !(n-r-i+1)!(2n-4i) !}\pi_{k-1, i}\\
    &+(k-2r)(2n-k-2r+1)\sum_{i=0}^r \frac{(n-2 i) !(2n-2i-2r)!}{(r-i)!(n-i-r)!(2n-4i)!}\pi_{k-1, i}\\
    =&\sum_{i = 0}^r \dfrac{(n-2i)!(2n-2i-2r)!}{(r-i)!(n-i-r)!(2n-4i)!}(k-2i)(2n-k-2i+1)\pi_{k-1,i} \\
    =&(k-2r)!\Lambda\psi(\tau_{k,r}).
    \end{align*}
Comparing term by term, the previous identities boil down to    
\[
2 (r - i) (2 n - 2 i - 2 r + 1) + (k - 2 r) (2 n - k - 2 r + 
    1) = (k - 2 i) (2 n - k - 2 i + 1) 
\] 
which is trivial.

 Commutation with $L$ is straightforward using $L\pi_{k,i} = \pi_{k+1,i}$.

Given that $\psi$ commutes with the operators $\Lambda$ and $L$ and  $\Val^{U(n)}$ is multiplicity-free, Schur's lemma implies that for each $0 \leq 2r \leq n$, there exists a constant $c_r$ such that $\restr{\psi}{\mathcal{I}_{0}^{n,r}} = c_r \nm{id}$. 

    Let $a_{2r,j}$ and $b_{2r,i}$ be the coeficients of $\pi_{2r,j}$ and $\tau_{2r,i}$ in \eqref{eq:descLef} and \eqref{eq:pi2rr} respectively, so that $\psi(\tau_{2r,i}) = \sum_{j= 0}^i a_{2r,j} \pi_{2r,j}$ and $\pi_{2r,r} = \sum_{i = 0}^r b_{2r,i} \tau_{2r,i}$. Then
    \[
    c_r \pi_{2r,r} = \psi(\pi_{2r,r})=\sum_{i = 0}^rb_{2r,i}\left(\sum_{j = 0}^{i} a_{2r,j} \pi_{2r,j}\right)=\sum_{j = 0}^{r}\sum_{i = j}^r b_{2r,i}a_{2r,j} \pi_{2r,j}.
    \]
    Comparing the coefficient of $\pi_{2r,r}$ on both sides we get $c_r=b_{2r,r}a_{2r,r}=1$ for each $0 \leq 2r \leq n$. Hence $\psi = \nm{id}$, which proves \eqref{eq:descLef}.\qedhere
\end{proof}

By plugging  \eqref{eq:pikrtasakis} and \eqref{eq:descLef} in \eqref{eq:tubo_pikr} one gets the tube formulas $\mathbf{T}_t\tau_{k,p}$ in terms of the $\tau_{i,j}$.

\section{Tube formulas in riemannian manifolds}
\subsection{Tubular and derivative operators}

Next we extend to any complete riemannian manifold $M$ the tubular operator $\mathbf T_t$ introduced in the previous section on linear spaces.
Let $T$ be the \textit{Reeb vector field} on $SM$, which is characterized by $i_T \alpha = 1$ and $\mathcal{L}_T\alpha = 0$, where $\mathcal{L}$ is the Lie derivative. The Reeb flow $\phi: SM \times \R \to SM$, defined as the flow of $T$, is a family of contactmorphisms and coincides with the geodesic flow on $SM$ (see e.g. \cite[Theorem 1.5.2]{contacttopology}).

\begin{defi}[Tubular and derivative operators]\label{def:operador_tubular}
Given $t\geq0$, we define the \emph{tubular operator} $\mathbf{T}_t$ by
\[
\mathbf{T}_t \: \mathcal{V}(M) \hasta \mathcal{V}(M), \quad \llbracket \omega,\eta\rrbracket \longmapsto \llbracket \phi_t^{*}\omega + (p_t)_*(\pi \circ \phi)^*\eta , \eta \rrbracket,
\]where $p_t \: SM \times [0,t] \to SM$ is the projection on the second factor, and $\phi_t=\phi(\cdot,t)$.
We define the \emph{derivative operator} $\partial=\partial_M$ by
\[
\derivada{M} \: \mathcal{V}(M) \hasta \mathcal{V}(M), \quad \mu \longmapsto \left. \dfrac{d}{dt}\right|_{t = 0} \mathbf{T}_t\mu.
\]
\end{defi}

To show that these definitions are consistent, suppose $\mu = \llbracket \omega,\eta\rrbracket = 0$, and let us check that $\mathbf{T}_t\mu = 0$ for all $t \geq 0$, i.e.
\[
\int_{N(A)}\phi_t^{*}\omega + \int_{N(A)} (p_t)_*(\pi \circ \phi)^*\eta + \int_A \eta = 0, \quad \forall A \in \mathcal{R}(M).
\]
By \eqref{eq:kernel} we have $\pi^*\eta = -D\omega=-d(\omega+\xi\wedge\alpha)$. Hence
\[
\begin{aligned}
\int_{N(A)} (p_t)_*(\pi \circ \phi)^*\eta &= - \int_{N(A)}(p_t)_*\circ \phi^*D\omega = - \int_{N(A)\times [0,t]}\phi^* d(\omega+
\xi \wedge \alpha) \\
&=- \int_{N(A) \times [0,t]}d\phi^{*}(\omega+\xi\wedge\alpha) = -\int_{N(A)\times \{0,t\}}\phi^* \omega+\phi^*\xi\wedge\alpha \\
&= \int_{N(A)}\phi_{0}^*\omega  - \int_{N(A)}\phi_t^{*}\omega = \int_{N(A)}\omega - \int_{N(A)}\phi_t^{*}\omega,
 \end{aligned}
\]
as $\alpha$ vanishes on $N(A)$.
Since $\llbracket\omega,\eta\rrbracket = 0$, we have $\int_{N(A)}\omega = -\int_A\eta$. Therefore $\mathbf{T}_t \mu = 0$. 

Let us next establish some basic properties of these operators.

\begin{lema}\label{lema:derivada_dificil}
    \[
    \frac{d}{dt}(p_t)_*\phi^*\rho =i_T \phi_t^*\rho,\qquad \rho\in \Omega^*(SM)
    \]
\end{lema}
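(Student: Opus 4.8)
The statement to prove is the identity
\[
\frac{d}{dt}(p_t)_*\phi^*\rho = i_T\phi_t^*\rho,\qquad \rho\in\Omega^*(SM).
\]
Here $\phi\colon SM\times\R\to SM$ is the Reeb/geodesic flow, $\phi_t=\phi(\cdot,t)$, and $p_t\colon SM\times[0,t]\to SM$ is projection onto the first factor (the "fiber integration" over the interval $[0,t]$). The plan is to reduce everything to the fundamental theorem of calculus on the interval factor, combined with Cartan's formula.

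First I would fix notation for the product $SM\times\R$: write $s$ for the coordinate on the $\R$-factor, $\partial_s$ for the corresponding vector field, and $\iota_t\colon SM\hookrightarrow SM\times\R$ for the inclusion at level $s=t$. The key observation is that $\phi$ intertwines $\partial_s$ with the Reeb field $T$, i.e. $\phi_*\partial_s = T$ along $\phi$, because $\phi$ is the flow of $T$; equivalently $\phi\circ(\text{translation by }t \text{ in } s) = \phi_t\circ\phi$ and differentiating gives $\phi^*(i_T\eta)$-type relations. Concretely, for any form $\rho$ on $SM$ one has $i_{\partial_s}\phi^*\rho = \phi^*(i_T\rho)$ is \emph{not} quite right; rather one uses that $\phi_t = \phi\circ\iota_t$ and $\frac{\partial}{\partial t}\phi_t = T\circ\phi_t$, so that $\frac{d}{dt}\phi_t^*\rho = \phi_t^*\mathcal L_T\rho$. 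This is the standard Lie-derivative-along-a-flow identity, and it is the only nontrivial input.

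Now I would compute the fiber integral. For a form $\beta$ on $SM\times[0,t]$, the pushforward $(p_t)_*\beta$ is obtained by integrating out the $s$-variable over $[0,t]$; writing $\beta = ds\wedge\gamma_s + \delta_s$ with $\gamma_s,\delta_s$ not involving $ds$, one has $(p_t)_*\beta = \int_0^t \gamma_s\,ds$. Apply this to $\beta=\phi^*\rho$: decompose $\phi^*\rho = ds\wedge(i_{\partial_s}\phi^*\rho) + (\text{terms without }ds)$, and note $i_{\partial_s}\phi^*\rho = \phi^*(i_T\rho)$ pulled along, which at fixed level $s$ restricts to $\phi_s^*(i_T\rho)$. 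Hence $(p_t)_*\phi^*\rho = \int_0^t \phi_s^*(i_T\rho)\,ds$. Differentiating in $t$ by the fundamental theorem of calculus gives exactly $\phi_t^*(i_T\rho) = i_T\phi_t^*\rho$, where the last equality holds because $\phi_t$ is a diffeomorphism of $SM$ and $\phi_{t*}\circ(\text{the field pulled back})$... more carefully: $\phi_t^*(i_T\rho) = i_{(\phi_t^{-1})_*T}\phi_t^*\rho$, and since $T$ is the generator of its own flow it is $\phi_t$-related to itself, so $(\phi_t^{-1})_*T = T$ and we get $i_T\phi_t^*\rho$ as desired.

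The main obstacle — really the only place care is needed — is justifying the decomposition of $\phi^*\rho$ into its $ds$-component and making the identification $i_{\partial_s}\phi^*\rho|_{SM\times\{s\}} = \phi_s^*(i_T\rho)$ precise; this follows from $\phi_*\partial_s = T\circ\phi$, i.e. $\partial_s$ is $\phi$-related to $T$, which in turn is the defining property of the flow $\phi$ of $T$. A clean alternative route that avoids explicit coordinates is to use the homotopy formula for fiber integration over $[0,t]$: one has $d(p_t)_* + (p_t)_* d = \iota_t^* - \iota_0^*$ on $\Omega^*(SM\times[0,t])$, and differentiating the resulting expression $(p_t)_*d\phi^*\rho + d(p_t)_*\phi^*\rho = \phi_t^*\rho - \phi_0^*\rho$ in $t$, together with $\phi_0=\mathrm{id}$ and Cartan's formula $\mathcal L_T = di_T + i_T d$, also yields the claim; but the direct fiber-integration computation above is the shortest. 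I would present the direct computation, with the single lemma $\frac{d}{dt}\phi_t^*\rho = \phi_t^*\mathcal L_T\rho$ and the relation $\phi_*\partial_s = T$ clearly stated.
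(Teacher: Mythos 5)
Your proposal is correct and follows essentially the same route as the paper: both reduce the pushforward to fiber integration over $[0,t]$, identify the $ds$-component via the $\phi$-relatedness of $\partial_s$ and $T$ to get $(p_t)_*\phi^*\rho=\int_0^t\phi_s^*(i_T\rho)\,ds$, and conclude by the fundamental theorem of calculus together with the commutation $i_T\phi_t^*=\phi_t^*i_T$ (the paper merely phrases the fiber integration by testing against a compact submanifold $N$). One small remark: your hedge that $i_{\partial_s}\phi^*\rho=\phi^*(i_T\rho)$ is ``not quite right'' is unnecessary --- since $d\phi(\partial_s)=T\circ\phi$, this identity holds exactly, and indeed you use it correctly later; the Lie-derivative identity $\frac{d}{dt}\phi_t^*\rho=\phi_t^*\mathcal{L}_T\rho$ is not needed for this lemma.
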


\begin{proof}
    Given a compact smooth submanifold $N\subset SM$, 
\begin{align*}
   \int_N (p_t)_*\phi^*\rho&=\int_{N\times [0,t]} \phi^{*}\rho\\
   &=\int_{N\times [0,t]} i_{\frac\partial{\partial t}}\phi^{*}\rho\wedge dt\\
   &= \int_0^t\left(\int_{N}  \phi_t^{*}i_{\frac{\partial\phi}{\partial t}}\rho\right) dt, \\
\end{align*}
Since $i_T$ and $\phi_t^*$ commute, the result follows. 
\end{proof}

\begin{prop}\label{prop:derivada}
For $\mu = \llbracket \omega,\eta\rrbracket$,
\begin{enumerate}[i)]
    \item $\partial \mu = \llbracket i_T \left(d\omega + \pi^*\eta\right),0\rrbracket$
    \item $\mathbf{T}_{t+s}\mu=(\mathbf T_{t}\circ \mathbf T_{s})\mu$
\end{enumerate}
\end{prop}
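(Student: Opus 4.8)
The plan is to prove both statements essentially by unwinding Definition \ref{def:operador_tubular} and using Lemma \ref{lema:derivada_dificil}. For part i), I would compute $\partial\mu = \frac{d}{dt}\big|_{t=0}\mathbf{T}_t\mu$ by differentiating the pair $(\phi_t^*\omega + (p_t)_*(\pi\circ\phi)^*\eta, \eta)$ term by term. The second form $\eta$ is constant in $t$, so it contributes nothing to the derivative. For the first slot, $\frac{d}{dt}\big|_{t=0}\phi_t^*\omega = \mathcal{L}_T\omega = i_T d\omega + d\, i_T\omega$ by Cartan's formula, and $\frac{d}{dt}\big|_{t=0}(p_t)_*(\pi\circ\phi)^*\eta = i_T (\pi\circ\phi_0)^*\eta = i_T\pi^*\eta$ by Lemma \ref{lema:derivada_dificil} applied to $\rho = (\pi\circ\phi)^*\eta$ (noting $\phi_0 = \mathrm{Id}$ and $\pi\circ\phi_0 = \pi$). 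Hence $\partial\mu = \llbracket i_T d\omega + d\,i_T\omega + i_T\pi^*\eta,\,0\rrbracket$. The remaining point is that the exact term $d\,i_T\omega$ defines the zero valuation: by the kernel description \eqref{eq:kernel} one checks $D(d\,i_T\omega) = 0$ (since the Rumin differential of an exact form vanishes, as $d(d\,i_T\omega + \alpha\wedge 0) = 0$ is already a multiple of $\alpha$) and $\int_{S_xM} d\,i_T\omega = 0$ by Stokes on the closed fiber $S_xM$. Therefore $\partial\mu = \llbracket i_T(d\omega + \pi^*\eta),0\rrbracket$ as claimed.

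For part ii), the natural approach is to verify the semigroup property directly from the definition. Writing $\mathbf{T}_s\mu = \llbracket \omega_s, \eta\rrbracket$ with $\omega_s = \phi_s^*\omega + (p_s)_*(\pi\circ\phi)^*\eta$, I would apply $\mathbf{T}_t$ to this and expand:
\[
\mathbf{T}_t\mathbf{T}_s\mu = \llbracket \phi_t^*\omega_s + (p_t)_*(\pi\circ\phi)^*\eta,\ \eta\rrbracket.
\]
Now $\phi_t^*\omega_s = \phi_t^*\phi_s^*\omega + \phi_t^*(p_s)_*(\pi\circ\phi)^*\eta = \phi_{t+s}^*\omega + \phi_t^*(p_s)_*(\pi\circ\phi)^*\eta$, using that $\phi$ is a flow so $\phi_t\circ\phi_s = \phi_{t+s}$. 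It then remains to identify $\phi_t^*(p_s)_*(\pi\circ\phi)^*\eta + (p_t)_*(\pi\circ\phi)^*\eta$ with $(p_{t+s})_*(\pi\circ\phi)^*\eta$, i.e.\ to show these two pushforward contributions combine into the single pushforward over the interval $[0,t+s]$. This is a Fubini-type statement: the pushforward over $[0,t+s]$ splits as the pushforward over $[0,t]$ plus the $\phi_t^*$-pullback of the pushforward over $[0,s]$, which is exactly the additivity of fiber integration along a concatenation of intervals combined with the flow property. Hence $\mathbf{T}_{t+s}\mu = \mathbf{T}_t\mathbf{T}_s\mu$, and by symmetry also $= \mathbf{T}_s\mathbf{T}_t\mu$.

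\textbf{Main obstacle.} The genuinely delicate point is the pushforward bookkeeping in part ii): one must be careful about the meaning of $(p_t)_*(\pi\circ\phi)^*\eta$ as a fiber integral over the interval factor and verify that the concatenation identity $(p_{t+s})_* = (p_t)_* + \phi_t^*\circ(p_s)_*$ (at the level of forms on $SM$, with $\phi$ intervening correctly because the parametrization of the second interval is shifted) holds rigorously. Equivalently, one can sidestep part of this by testing against an arbitrary $A\in\mathcal R(M)$ and reducing, via the normal cycle $N(A)$ and Stokes, to the geometric fact that $(A_s)_t = A_{t+s}$ for small radii together with the description of $N(A_s)$ via the geodesic flow; this is the route implicitly used in the consistency check preceding the proposition. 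I would present the form-level concatenation argument as the primary one, since it is cleaner and avoids re-deriving properties of normal cycles of tubes. A secondary minor point to address is making explicit that all these manipulations are valid for $t,s\geq 0$ in the range where the relevant forms and pushforwards are defined, which for the identity of valuations is all we need.
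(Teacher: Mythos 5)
Your part i) is essentially the paper's own argument: Cartan's formula for $\frac{d}{dt}\big|_{t=0}\phi_t^*\omega$, Lemma \ref{lema:derivada_dificil} for the pushforward term, and discarding the exact term $d\,i_T\omega$ (the paper simply writes ``modulo exact forms''; your verification via \eqref{eq:kernel} is correct, and even more directly one can note that $N(A)$ is a cycle). Your part ii), however, takes a genuinely different route. The paper never touches the pushforward bookkeeping you identify as the main obstacle: it observes that $s\mapsto\mathbf{T}_{t+s}\mu$ and $s\mapsto\mathbf{T}_t\mathbf{T}_s\mu$ agree at $s=0$ and, by the derivative formula $\frac{d}{ds}\mathbf T_s \mu=\llbracket i_T(d\phi_s^*\omega+\phi_s^*\pi^*\eta),0\rrbracket$ together with the commutation of $\phi_t^*$ with $i_T$ and $d$, have the same $s$-derivative, namely $\llbracket\phi_t^*i_T(d\phi_s^*\omega+\phi_s^*\pi^*\eta),0\rrbracket$; uniqueness of solutions then gives the identity of valuations. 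You instead prove the identity at the level of representing forms via the concatenation identity for the fiber integration, and this identity does hold: Lemma \ref{lema:derivada_dificil} gives $(p_t)_*\phi^*\rho=\int_0^t\phi_u^*i_T\rho\,du$, whence $\phi_t^*(p_s)_*\phi^*\rho=\int_0^s\phi_{t+u}^*i_T\rho\,du=\int_t^{t+s}\phi_v^*i_T\rho\,dv$, and the two contributions add up to $(p_{t+s})_*\phi^*\rho$. Your approach buys a slightly stronger conclusion (the representing forms coincide, not merely the valuations they induce) and avoids differentiating under $\mathbf{T}_t$; the paper's approach buys a shorter proof that sidesteps the pushforward manipulation entirely by recycling the computation already done for part i). One cosmetic slip in your part i): Lemma \ref{lema:derivada_dificil} should be applied with $\rho=\pi^*\eta$, since the $\phi^*$ is already built into the statement of the lemma and $(\pi\circ\phi)^*=\phi^*\pi^*$; the conclusion $i_T\pi^*\eta$ is nonetheless the right one.
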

\begin{proof}
Modulo exact forms we have
\begin{equation}\label{eq:lie_der}
\frac{d}{dt} \phi_{t}^*\omega=\left.\frac{d}{ds}\right|_{s=0} \phi_{t+s}^*\omega=\mathcal L_T \phi_t^*\omega\equiv i_T \phi_t^*d\omega.
\end{equation}
Together with Lemma \ref{lema:derivada_dificil}, this yields
\begin{equation}\label{eq:derivada_T}
    \frac{d}{dt}\mathbf T_t \mu=\llbracket i_T(d\phi_t^*\omega+\phi_t^*\pi^*\eta),0\rrbracket
\end{equation}
Evaluating at $t=0$, this gives $i)$.

In order to prove $ii)$, it is enough to check that both sides have the same derivative with respect to $s$, as they clearly agree for $s=0$.
By \eqref{eq:derivada_T}, we have
\begin{align*}
    \frac{d}{ds}\mathbf T_t\circ \T_s (\mu)&= \mathbf T_t\circ  \frac{d}{ds} \T_s(\mu) \\
    &=\mathbf T_t \llbracket i_T(d\phi_s^*\omega+\phi_s^*\pi^*\eta),0\rrbracket\\
    &=  \llbracket \phi_t^* i_T(d\phi_s^*\omega+\phi_s^*\pi^*\eta),0\rrbracket.
\end{align*}
Since $\phi_t^*$ and $i_T$ commute, it follows from \eqref{eq:derivada_T} that $\frac{d}{ds}\mathbf{T}_{t+s}=\frac{d}{ds}\mathbf T_{t}\circ \mathbf T_{s}$.
\end{proof}

Fix $\mu \in \mathcal{V}(M)$. It follows from Proposition \ref{prop:derivada} $ii)$ that
\begin{equation}\label{eq:Cauchy_Tmu}
\frac{d}{dt} \mathbf{T}_t\mu=\derivada{} \mathbf{T}_t\mu.
\end{equation}
If $\mu \in \mathcal V(M)^G$ for a group $G$ acting on $M$ by isometries, then also $\mathbf{T}_t\mu\in\mathcal V(M)^G$. Hence, in case $\mathcal V^G(M)$ is finite-dimensional,  computing $\mathbf{T}_t\mu$ boilws down to solving the Cauchy problem \eqref{eq:Cauchy_Tmu} with initial condition $\mathbf{T}_0\mu=\mu$; i.e. 
\begin{equation}\label{eq:exponential_T}
\mathbf{T}_t\mu = \nm{exp}(t\partial)\mu=  \sum_{i \geq 0}\dfrac{t^i}{i!}\partial^i\mu.
\end{equation}
This is the approach we will follow to obtain the tube formulas for invariant valuations in complex space forms.
Note that \eqref{eq:exponential_T} coincides with \eqref{eq:tubularplanotaylor2} except that $\partial$ does not need to be nilpotent for general $M$.
\subsection{Tubes in riemannnian manifolds}\label{subsec:reach}

Let $M$ be a complete riemannian manifold and let $d\: M \times M \to [0,\infty)$ be the riemannian distance on $M$. For $t\geq 0$, the \emph{tube} of radius $t$ around a subset  $A\subset M$ is defined as
\[
A_t:= \suce{p \in M : d_A(p) \leq t},
\]
where
\[
d_A(p) := \inf\suce{d(p,q) : q \in A}.
\]
Next we review some basic facts about tubes around sets of positive reach (introduced by Federer in euclidean spaces and by Kleinjohann in riemannian manifolds). For such sets $A$ we will prove that  $\mathbf{T}_t \mu(A)=\mu(A_t)$ for any $\mu\in\mathcal V(M)$ and sufficiently small $t$.

\begin{defi}[Sets of positive reach]
    A {\em set of positive reach} in $M$ is a closed  subset $A\subset M$ for which there exists an open neighborhood $U_A\supset A$ such that for every $ p\in U_A\setminus A$ there exists a unique point $f_A(p)\in A$ such that $d(p,f_A(p))=d_A(p)$, and a  unique minimizing geodesic joining $p$ with $f_A(p)$. We denote by $\mathcal R(M)$ the class of compact sets of positive reach in $M$.
\end{defi}

By the previous definition, there is a well-defined map 
\begin{equation}\label{eq:descripcionFA}
 F_A : U_A \setminus A \hasta SM,\qquad   F_A(p) = \left(\gamma(0),\gamma'(0)\right)
\end{equation}
where $\gamma$ is the unique minimizing geodesic such that $\gamma(0) = f_A(p)$ and $\gamma(d_A(p)) = p$.

It was shown by Kleinjohann (\cite[Satz 3.3]{kleinjohann}) that $N(A) := F_A(U_A\setminus A)$ is a naturally oriented compact Lipschitz submanifold of $SM$. The corresponding current, also denoted by $N(A)$, is  called the {\em normal cycle} of $A$. It follows from Proposition \ref{prop:lipschitz2} below that $N(A)$ is legendrian (i.e. it vanishes on multiples of $\alpha$).

\begin{prop}[{{\cite[Satz 3.3, Korollar 2.7]{kleinjohann}}}]\label{prop:ciclonormalfundamental} Given a set of positive reach $A$ in $M$ there exists $r=r_A>0$ such that $A_r\subset U_A$ and

\begin{enumerate}[i)]
    \item for $0<t<r$ the restriction $\restr{F_A}{\partial A_t}$ gives a bilipschitz homeomorphism between $\partial A_t$ and $N(A)$, preserving the natural orientations,
    \item the distance function $d_A$ is of class $C^1$ in $A_{r}\setminus A$ and
    \[
     \phi_{d_A(p)}(F_A(p))=(p,\nabla d_A(p)),\qquad \partial A_t=d_A^{-1}(\{t\})
    \] for $0<t<r$.
    In particular, each level set $\partial A_t$ with $0<t<r$ is a $C^1$-regular hypersurface with unit normal vector field $\nabla d_A$.
\end{enumerate}
\end{prop}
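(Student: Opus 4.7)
The plan is to reduce everything to the study of the normal exponential map
\[
E\colon N(A)\times [0,r)\longrightarrow M,\qquad E(v,t):=\exp_{\pi(v)}(tv),
\]
and to prove that, for sufficiently small $r>0$, $E$ restricts to a bilipschitz homeomorphism onto $A_r$. First, by compactness of $A$ and openness of $U_A$ there is some $r_0>0$ with $A_{r_0}\subset U_A$; after shrinking $r_0$ we may also assume $\exp$ is defined on all vectors of length at most $r_0$ based at points of $A$. Injectivity of $E$ on $N(A)\times[0,r_0)$ follows from the unique foot point property: if $E(v_1,t_1)=E(v_2,t_2)=p$, then both $\pi(v_i)$ realize $d_A(p)$, forcing $\pi(v_1)=\pi(v_2)=f_A(p)$, and uniqueness of the minimizing geodesic forces $v_1=v_2$ and $t_1=t_2=d_A(p)$. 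Surjectivity onto $A_{r_0}\setminus A$ is immediate from $E(F_A(p),d_A(p))=p$.

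The main obstacle is the bilipschitz property. The forward Lipschitz bound on $E$, with $N(A)$ carrying the metric induced from $SM$, is routine from smoothness of $\exp$ and compactness of $A$. For the inverse, $d_A$ is automatically $1$-Lipschitz, so what remains is local Lipschitz control of $F_A$. The approach I would take is a Jacobi-field / focal-point comparison: for $r$ small enough, no focal point of $A$ along any outward normal geodesic can occur in $[0,r]$, as its presence at a point $p$ would produce nearby pairs of minimizing segments from $A$ to points close to $p$, contradicting the unique foot point condition. This yields a uniform lower bound on $\|dE\|$ on the smooth locus of $N(A)\times[0,r]$, which by a standard compactness-plus-covering argument upgrades to a global bilipschitz bound for $E$ after a further reduction of $r$ to some $r_A\leq r_0$.

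Once $E$ is a bilipschitz homeomorphism, item (i) is essentially bookkeeping: $\partial A_t=E(N(A)\times\{t\})$, the inverse of $F_A|_{\partial A_t}$ is $v\mapsto E(v,t)$, and orientations match by construction of $F_A$. For (ii), the identity $d_A\circ E(v,t)=t$ identifies $d_A$ with the second coordinate of $E^{-1}$, from which $\partial A_t=d_A^{-1}(\{t\})$ follows by continuity. To obtain $C^1$-regularity and identify the gradient, I would fix $p=E(v,t)$ and a smooth curve $c(s)$ with $c(0)=p$, $c'(0)=X$; using that $f_A$ is continuous (by uniqueness plus a compactness argument), the quantity $d_A(c(s))=d(c(s),f_A(c(s)))$ is amenable to the first-variation formula for arc length, yielding $\frac{d}{ds}\big|_{s=0}d_A(c(s))=\langle X,\gamma_v'(t)\rangle$ where $\gamma_v$ is the geodesic with $\gamma_v'(0)=v$. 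Hence $d_A$ is differentiable at $p$ with $\nabla d_A(p)=\gamma_v'(t)$, and continuity of this gradient in $p$ follows from continuity of $F_A$ and smoothness of the geodesic flow, so $d_A\in C^1(A_r\setminus A)$. The identity $\phi_{d_A(p)}(F_A(p))=(p,\nabla d_A(p))$ is then tautological, since $\phi$ is the geodesic flow and $\gamma_v'(t)$ is by definition its value at time $t=d_A(p)$ on the initial condition $F_A(p)$.
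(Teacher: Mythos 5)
The paper offers no proof of this proposition: it is imported verbatim from Kleinjohann (Satz 3.3, Korollar 2.7), so your proposal is being measured against the classical argument. Your overall architecture — the normal exponential map $E(v,t)=\exp_{\pi(v)}(tv)$, injectivity and surjectivity from the unique foot point property, and the identification $\nabla d_A(p)=\gamma_v'(d_A(p))$ in part (ii) — is the standard one and is sound in outline. In part (ii) there is a minor, fixable slip: you cannot apply the first-variation formula to $s\mapsto d(c(s),f_A(c(s)))$ without knowing that $f_A\circ c$ is differentiable, which is not available; the clean route is to note that $d_A$ is the infimum of the smooth functions $d(\cdot,a)$ over $a\in A$, which gives the upper bound on the difference quotient with the claimed gradient, and to get the matching lower bound by comparing with the single function $d(\cdot,f_A(p))$ restricted to a neighbourhood where $f_A(p)$ remains a foot point; continuity of $F_A$ (uniqueness plus compactness) then upgrades differentiability to $C^1$.

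The genuine gap is in the bilipschitz step. Your argument hinges on the absence of focal points of $A$ along outward normal geodesics and on a uniform lower bound for $\|dE\|$ on the ``smooth locus'' of $N(A)\times[0,r]$. But a set of positive reach is merely a closed set, and $N(A)$ is a priori only a Lipschitz submanifold: there is no second fundamental form, no notion of focal point, and no smooth structure on which $dE$ is globally defined. Even granting an almost-everywhere lower bound on the differential of a Lipschitz map, this does not yield a lower Lipschitz bound or injectivity (such a map can fold), so the ``compactness-plus-covering'' upgrade does not go through. The classical proof replaces all of this by a purely metric estimate: the foot point map $f_A$ is Lipschitz on $\partial A_t$ with constant controlled by $r/(r-t)$, obtained from a two-point comparison (if $a=f_A(p)$, every point of the minimizing geodesic from $a$ to $p$ also has foot point $a$, and one plays this off against the triangle inequality; Kleinjohann transfers Federer's Euclidean inequality to the Riemannian setting via normal coordinates). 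That estimate uses no smoothness of $A$ and is the real content of part (i). As written, your argument establishes the proposition only for smooth compact submanifolds, not for the class $\mathcal R(M)$ on which the paper needs it.
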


The following propositions are certainly well-known.

\begin{prop}\label{prop:tubosetpositivereach}
    For $0<s<r=r_A$ the set $A_s$ has positive reach and on $A_r\setminus A_s$ we have
    \begin{equation}\label{eq:dist_tub}
         d_{A_s}=d_A-s,\qquad F_{A_s}=\phi_s\circ F_A.
    \end{equation}In particular $(A_s)_t=A_{t+s}$ for $t+s<r$.
\end{prop}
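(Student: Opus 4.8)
For $0<s<r=r_A$, the set $A_s$ has positive reach, and on $A_r\setminus A_s$ one has $d_{A_s}=d_A-s$ and $F_{A_s}=\phi_s\circ F_A$; consequently $(A_s)_t=A_{t+s}$ whenever $t+s<r$.

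The plan is to work directly from the characterization of positive reach, using the structure of geodesics in a Riemannian manifold together with Proposition \ref{prop:ciclonormalfundamental}. First I would fix $p\in A_r\setminus A_s$ and let $\gamma$ be the unique unit-speed minimizing geodesic from $f_A(p)\in A$ to $p$, so that $\gamma(0)=f_A(p)$, $\gamma(d_A(p))=p$, and $d_A(p)<r$. The point $q:=\gamma(s)$ lies on $\partial A_s$ by part ii) of Proposition \ref{prop:ciclonormalfundamental} (since $d_A$ is $C^1$ on $A_r\setminus A$ and $\partial A_s=d_A^{-1}(\{s\})$), and I claim $q$ is the unique nearest point of $A_s$ to $p$ with $d(p,q)=d_A(p)-s$. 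The inequality $d_{A_s}(p)\le d(p,q)\le d_A(p)-s$ is immediate. For the reverse, any point $q'\in A_s$ has $d_A(q')\le s$, so by the triangle inequality $d_A(p)\le d_A(q')+d(q',p)\le s+d(p,q')$, giving $d(p,q')\ge d_A(p)-s$; hence $d_{A_s}(p)=d_A(p)-s$ and $q$ realizes the distance. This establishes the first equality in \eqref{eq:dist_tub} pointwise.

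Next I would verify uniqueness of both the nearest point and the minimizing geodesic, which is what gives $A_s$ positive reach (with neighborhood $U_{A_s}:=A_r\setminus A$, say, or any slightly larger open set). Suppose $q'\in A_s$ also satisfies $d(p,q')=d_A(p)-s$ with minimizing geodesic $\sigma$ from $q'$ to $p$. Pick the nearest point $a'\in A$ to $q'$ and a minimizing geodesic from $a'$ to $q'$ of length $d_A(q')\le s$; concatenating with $\sigma$ gives a path from $A$ to $p$ of length $d_A(q')+(d_A(p)-s)\le d_A(p)$, so it must be a minimizing geodesic from $A$ to $p$ of length exactly $d_A(p)$, which forces $d_A(q')=s$ and the concatenation to be smooth. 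By uniqueness of the minimizing geodesic from $A$ to $p$ (positive reach of $A$), this concatenated geodesic equals $\gamma$; hence $a'=f_A(p)$, $q'=\gamma(s)=q$, and $\sigma=\gamma|_{[s,d_A(p)]}$. This proves uniqueness, so $A_s\in\mathcal R(M)$, and simultaneously identifies $F_{A_s}(p)=(\gamma(s),\gamma'(s))$. Since $F_A(p)=(\gamma(0),\gamma'(0))$ and the geodesic flow satisfies $\phi_s(\gamma(0),\gamma'(0))=(\gamma(s),\gamma'(s))$, we get $F_{A_s}=\phi_s\circ F_A$ on $A_r\setminus A_s$, the second equality in \eqref{eq:dist_tub}.

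Finally, $(A_s)_t=A_{t+s}$ for $t+s<r$ follows from $d_{A_s}=d_A-s$: a point $p$ lies in $(A_s)_t$ iff $d_{A_s}(p)\le t$; if $p\in A_s$ this is automatic and $d_A(p)\le s\le t+s$ so $p\in A_{t+s}$, while if $p\in A_r\setminus A_s$ then $d_{A_s}(p)=d_A(p)-s\le t$ iff $d_A(p)\le t+s$, i.e. $p\in A_{t+s}$; the reverse inclusion is the same computation read backwards (noting $A_{t+s}\subset A_r\subset U_A$ so every relevant point is covered). I expect the main obstacle to be the uniqueness argument for the minimizing geodesic to $A_s$: one must argue carefully that a minimizing geodesic from $A_s$ to $p$ cannot "bend" at $\partial A_s$, which is exactly where the concatenation-and-minimality trick is needed, invoking that a length-minimizing curve between $A$ and $p$ is a smooth geodesic and is unique by the positive reach of $A$. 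Everything else is bookkeeping with the triangle inequality and Proposition \ref{prop:ciclonormalfundamental}.
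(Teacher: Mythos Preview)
Your proof is correct and follows essentially the same route as the paper: take the unique minimizing geodesic $\gamma$ from $f_A(p)$ to $p$, use the triangle inequality to show $\gamma|_{[s,d_A(p)]}$ realizes $d_{A_s}(p)=d_A(p)-s$, and read off $F_{A_s}(p)=(\gamma(s),\gamma'(s))=\phi_s(F_A(p))$. In fact your version is more complete than the paper's, which asserts $F_{A_s}(p)=(\gamma(s),\gamma'(s))$ without spelling out the uniqueness of the foot point and of the minimizing geodesic to $A_s$; your concatenation argument fills exactly that gap.
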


\begin{proof}
    Let $p \in A_r\setminus A_s$, and put $d=d_A(p)$. Let $\gamma\:[0,d]\to A_r$ be the unique minimizing geodesic with $\gamma(0)=f_A(p)$ and $\gamma(d)=p$. In particular $|\gamma'|=1$ and thus $\gamma(s)\in A_s$.
    
    Assume that $\restr{\gamma}{[s,d]}$ does not minimize the distance between $p$ and $A_s$, i.e., there exists a smooth curve $\alpha : [0,1] \to M$ with $q:=\alpha(0)\in A_s$, $\alpha(1)=p$ and length $\ell(\alpha) < d-s$. It follows that 
    \[
    d_A(p)\leq \ell(\alpha)+d_A(q)\leq\ell(\alpha)+s<d_A(p),
    \]
    a contradiction. We conclude that $\gamma|_{[s,d]}$ realizes the distance $d_{A_s}(p)$. Hence $d_{A_s}(p)=d_A(p)-s$ and 
     \[F_{A_s}(p)=(\gamma(s),\gamma'(s))=\phi_s(\gamma(0),\gamma'(0))=\phi_s(F_A(p)).\]

\end{proof}

\begin{prop}\label{prop:lipschitz1}
For $0<s<r_A$, the restriction $\restr{\phi_s}{N(A)}$ is a bilipschitz homeomorphism between $N(A)$ and $N(A_s)$.
\end{prop}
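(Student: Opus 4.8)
The plan is to combine the two previously established descriptions of the normal cycle in terms of the boundaries of tubes, using $\restr{F_A}{\partial A_t}$ as a bilipschitz chart, together with the compatibility $F_{A_s}=\phi_s\circ F_A$ from Proposition \ref{prop:tubosetpositivereach}. First I would fix $0<s<r_A$ and choose a radius $r'>0$ such that both $A$ and $A_s$ satisfy the conclusions of Proposition \ref{prop:ciclonormalfundamental}; concretely one may take $r'$ small enough that $r'<r_A-s$, so that $r_{A_s}\geq r'$ by the last assertion of Proposition \ref{prop:tubosetpositivereach}. Then, for any fixed $0<t<r'$, Proposition \ref{prop:ciclonormalfundamental} i) gives that $\restr{F_A}{\partial A_t}\colon \partial A_t\to N(A)$ and $\restr{F_{A_s}}{\partial (A_s)_t}\colon \partial (A_s)_t\to N(A_s)$ are both bilipschitz homeomorphisms preserving orientations. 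Since $(A_s)_t=A_{s+t}$ by Proposition \ref{prop:tubosetpositivereach}, the source of the second map is $\partial A_{s+t}$.

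Next I would observe that on $\partial A_{s+t}=d_A^{-1}(\{s+t\})\subset A_r\setminus A_s$ the identity $F_{A_s}=\phi_s\circ F_A$ holds pointwise, again by Proposition \ref{prop:tubosetpositivereach}. Hence the map $\phi_s\colon N(A)\to N(A_s)$ factors as
\[
\phi_s|_{N(A)} = \bigl(F_{A_s}|_{\partial A_{s+t}}\bigr)\circ \bigl(F_A|_{\partial A_{s+t}}\bigr)^{-1}\circ \bigl(F_A|_{\partial A_t}\bigr)\circ \bigl(F_A|_{\partial A_t}\bigr)^{-1},
\]
where the interior piece $\bigl(F_A|_{\partial A_{s+t}}\bigr)^{-1}\circ \bigl(F_A|_{\partial A_t}\bigr)\colon \partial A_t\to \partial A_{s+t}$ is itself a bilipschitz homeomorphism (it is the radial correspondence along geodesics, i.e. $p\mapsto$ the point at parameter $s$ past $p$ on the minimizing geodesic from $f_A(p)$). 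Being a composition of bilipschitz homeomorphisms, $\phi_s|_{N(A)}$ is a bilipschitz homeomorphism onto $N(A_s)$. To see it preserves orientations, note that all four factors do: $F_A|_{\partial A_t}$ and $F_{A_s}|_{\partial A_{s+t}}$ preserve the natural orientations by Proposition \ref{prop:ciclonormalfundamental} i), and the radial correspondence $\partial A_t\to \partial A_{s+t}$ preserves orientations because it extends (by continuity in the radial parameter) the identity at parameter difference $0$, while the level hypersurfaces carry the orientation induced from $\nabla d_A$, which varies continuously by Proposition \ref{prop:ciclonormalfundamental} ii).

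The main obstacle I anticipate is the bilipschitz control: $N(A)$ is only a Lipschitz submanifold, so $\phi_s|_{N(A)}$ need not be better than Lipschitz, and one must argue that the Lipschitz constants of the factors are finite and, crucially, that the inverse maps are also Lipschitz. Here I would lean on Proposition \ref{prop:ciclonormalfundamental}: the maps $F_A|_{\partial A_t}$ have bilipschitz inverses by hypothesis, and the geodesic flow $\phi_s$ is smooth on the open set of $SM$ over which the geodesics of length $\leq r_A$ emanating from $A$ stay within $A_{r_A}$, hence Lipschitz with Lipschitz inverse $\phi_{-s}$ on the compact set $N(A)$. Composing finitely many such maps keeps the bilipschitz property, with an explicit (if unenlightening) bound on the constant. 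A secondary technical point is independence of the auxiliary choice of $t$: the displayed factorization visibly does not depend on $t$ once one checks the interior piece glues correctly, which follows from the uniqueness of minimizing geodesics in $U_A$.
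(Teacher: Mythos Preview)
Your approach is correct and coincides with the paper's: both use the bilipschitz charts $F_A|_{\partial A_t}$ and $F_{A_s}|_{\partial A_t}$ from Proposition~\ref{prop:ciclonormalfundamental} together with the identity $F_{A_s}=\phi_s\circ F_A$ from Proposition~\ref{prop:tubosetpositivereach}. Note that in your displayed factorization the last two factors cancel to the identity on $N(A)$, so it reduces to the paper's shorter formula $\phi_s|_{N(A)}=(F_{A_s}|_{\partial A_{s+t}})\circ(F_A|_{\partial A_{s+t}})^{-1}$, making the separate discussion of the radial correspondence and of the smoothness of $\phi_s$ unnecessary.
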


\begin{proof}

Take $t$ with $s<t<\min(r_A,s+r_{A_s})$. 
By Proposition \ref{prop:ciclonormalfundamental}, both $\restr{F_A}{\partial A_t}\: \partial A_t \to N(A)$ and $\restr{F_{A_s}}{\partial A_t} \: \partial A_t \to N(A_s)$  are bilipschitz homeomorphisms. By \eqref{eq:dist_tub}  we have
\[
\restr{\phi_s}{N(A)}=\restr{F_{A_s}}{\partial A_t}\circ(\restr{F_A}{\partial A_t})^{-1}.
\]
The statement follows.
\end{proof}

\begin{prop}\label{prop:lipschitz2}For $0<t<r_A$
the composition $\pi \circ \phi$ gives a bijective Lipschitz map between $N(A) \times (0,t]$ and $A_t \setminus A$.
\end{prop}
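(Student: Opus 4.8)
The plan is to build the claimed bijection explicitly and verify the Lipschitz property using the structural results already established, namely Propositions \ref{prop:ciclonormalfundamental} and \ref{prop:tubosetpositivereach}. First I would define the map in the forward direction: on $N(A)\times(0,t]$ send $(v,s)\mapsto \pi(\phi_s(v))=\pi(\phi(v,s))$. Since $N(A)=F_A(U_A\setminus A)$ and $F_A$ followed by the geodesic flow recovers the footpoint-plus-geodesic data, for a point $p\in A_t\setminus A$ with $s:=d_A(p)\in(0,t]$ Proposition \ref{prop:ciclonormalfundamental} ii) gives $\phi_{s}(F_A(p))=(p,\nabla d_A(p))$, hence $\pi(\phi_s(F_A(p)))=p$. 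This shows surjectivity: the inverse candidate is $p\mapsto (F_A(p),d_A(p))$.

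Next I would check injectivity. Suppose $\pi(\phi_{s_1}(v_1))=\pi(\phi_{s_2}(v_2))=:p$ with $v_i\in N(A)$, $s_i\in(0,t]$. Each $v_i$ is of the form $(\gamma_i(0),\gamma_i'(0))$ for a unit-speed geodesic $\gamma_i$ emanating from a point of $A$, and $\phi_{s_i}(v_i)=(\gamma_i(s_i),\gamma_i'(s_i))$ with $\gamma_i(s_i)=p$. Because $s_i<r_A$ and $p\in U_A$, uniqueness of the footpoint and of the minimizing geodesic in the definition of positive reach forces $d_A(p)=s_1=s_2$ (the geodesic $\gamma_i|_{[0,s_i]}$ realizes the distance from $p$ to $A$, as in the argument of Proposition \ref{prop:tubosetpositivereach}), and then $\gamma_1=\gamma_2$, so $v_1=v_2$. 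Thus $\pi\circ\phi$ restricted to $N(A)\times(0,t]$ is a bijection onto $A_t\setminus A$.

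Finally I would address the Lipschitz claim. In the forward direction, $\pi$ is smooth and $\phi$ is the (smooth) geodesic flow, so $(v,s)\mapsto\pi(\phi(v,s))$ is a Lipschitz (indeed locally smooth along the flow directions, and Lipschitz in $v$ since $N(A)$ is a compact Lipschitz submanifold by Proposition \ref{prop:ciclonormalfundamental}) map on the compact domain $N(A)\times(0,t]$. I expect the only delicate point to be phrasing this so that the composite is genuinely Lipschitz rather than merely continuous: one restricts to $N(A)\times[\epsilon,t]$ for the flow to stay in a fixed compact set and uses that $\phi$ has bounded differential there, then lets $\epsilon\to 0$; since the statement only asks for a bijective Lipschitz map (not bi-Lipschitz), this suffices. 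For the inverse, $p\mapsto(F_A(p),d_A(p))$, I would invoke Proposition \ref{prop:ciclonormalfundamental}: $d_A$ is $C^1$ on $A_r\setminus A$, and $F_A$ restricted to each level set $\partial A_s$ is bi-Lipschitz onto $N(A)$ with Lipschitz constants controllable up to $s=r_A$; combining over the levels $s\in(0,t]$ with $t<r_A$ gives a Lipschitz inverse if desired, though again only the forward Lipschitz property is claimed. The main obstacle is thus bookkeeping the uniform Lipschitz constants near $A$ (where $s\to 0$); everything else is a direct unwinding of the positive-reach structure.
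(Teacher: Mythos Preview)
Your plan matches the paper's proof almost exactly: surjectivity via $p\mapsto(F_A(p),d_A(p))$, injectivity via uniqueness of the footpoint and minimizing geodesic (the paper phrases this through Proposition~\ref{prop:lipschitz1}, observing $\phi_{s_i}(v_i)\in N(A_{s_i})$ so $p\in\partial A_{s_i}$ and hence $s_1=s_2$, but the content is the same). The one place where your write-up is muddled is the Lipschitz argument: restricting to $N(A)\times[\epsilon,t]$ and ``letting $\epsilon\to 0$'' does \emph{not} produce a uniform Lipschitz constant on $(0,t]$. Fortunately no such limiting is needed: $N(A)$ is compact and $\pi\circ\phi$ is smooth on all of $SM\times\R$, so its differential is bounded on the compact set $N(A)\times[0,t]$, giving a global Lipschitz bound on $N(A)\times(0,t]$ directly. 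There is no obstacle at $s\to 0$ for the forward map (the flow $\phi_0$ is the identity); your worry seems to conflate the domain with the image. Finally, the discussion of Lipschitz regularity for the inverse $p\mapsto(F_A(p),d_A(p))$ is unnecessary, since the proposition only claims a bijective Lipschitz map, not a bilipschitz one.
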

\begin{proof}
Since $\pi,\phi$ are smooth, the restriction of $\pi\circ\phi$ to the Lipschitz manifold $N(A)\times (0,t]$ is clearly Lipschitz. 

Given $(\xi,s)\in N(A)\times (0,t]$, we know by the previous proposition that $\phi(\xi,s)\in N(A_s)$ and thus $\pi\circ \phi(x,s)\in\partial A_s\subset A_t\setminus A$.

To check surjectivity, given $p\in A_t\setminus A$ take $\xi=F_A(p), s=d_A(p)$ and note that $\pi\circ\phi(\xi,s)=p$.    

As for injectivity, suppose $\pi\circ \phi(\xi_1,t_1)=\pi\circ \phi(\xi_2,t_2)=:p$ for some $(\xi_1,t_1),(\xi_2,t_2)\in N(A)\times (0,t]$. By the previous proposition $p$ belongs to both $\partial A_{t_1},\partial A_{t_2}$, so $t_1=t_2$. For $s\in[0,t_1]$, the geodesics $\gamma_1(s)=\pi \circ \phi(\xi_1,s),\gamma_2(s)=\pi \circ \phi(\xi_2,s)$ realize the distance between $p$ and $A$. Since $A_s\subset A_{r_A}\subset U_A$, we have $\gamma_1=\gamma_2$ and thus $\xi_1=\xi_2$.
\end{proof}

\begin{cor}\label{cor:igualdad_tubos}
For every $A\in \mathcal R(M)$ and $\mu\in \mathcal V(M)$ we have $\mu(A_t) = \mathbf{T}_t\mu(A)$ for $0\leq t\leq r_A$.
\end{cor}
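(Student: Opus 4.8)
The plan is to unwind the definition of $\mathbf{T}_t\mu$ and match it term by term against the defining integral formula for $\mu(A_t)$, using the structural results about tubes established in Propositions \ref{prop:tubosetpositivereach}--\ref{prop:lipschitz2}. Write $\mu=\llbracket \omega,\eta\rrbracket$. By definition,
\[
\mathbf{T}_t\mu(A)=\int_{N(A)}\phi_t^*\omega+\int_{N(A)}(p_t)_*(\pi\circ\phi)^*\eta+\int_A\eta,
\]
while, directly from the definition of a smooth valuation,
\[
\mu(A_t)=\int_{N(A_t)}\omega+\int_{A_t}\eta.
\]
So it suffices to establish the two identities
\[
\int_{N(A_t)}\omega=\int_{N(A)}\phi_t^*\omega,\qquad
\int_{A_t}\eta=\int_A\eta+\int_{N(A)}(p_t)_*(\pi\circ\phi)^*\eta.
\]

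For the first identity I would invoke Proposition \ref{prop:lipschitz1}: for $0<t<r_A$ the Reeb flow $\phi_t$ restricts to an orientation-preserving bilipschitz homeomorphism $N(A)\to N(A_t)$, hence the change-of-variables formula for currents (or Lipschitz maps between oriented Lipschitz manifolds) gives $\int_{N(A_t)}\omega=\int_{N(A)}\phi_t^*\omega$ immediately. The edge case $t=0$ is trivial since $\phi_0=\mathrm{id}$ and $N(A_0)=N(A)$. For the second identity, decompose $A_t=A\sqcup(A_t\setminus A)$, so that $\int_{A_t}\eta=\int_A\eta+\int_{A_t\setminus A}\eta$, and then use Proposition \ref{prop:lipschitz2}, which provides a bijective Lipschitz parametrization $\pi\circ\phi\colon N(A)\times(0,t]\to A_t\setminus A$. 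Pulling back $\eta$ along this map and pushing forward along $p_t$ (the projection to the $SM$ factor) yields exactly $\int_{N(A)}(p_t)_*(\pi\circ\phi)^*\eta=\int_{A_t\setminus A}\eta$, again by the change-of-variables formula, once one checks the orientation convention matches. This requires knowing that $\pi\circ\phi$ is not merely bijective but in fact a bilipschitz parametrization of the correct degree, which follows from Proposition \ref{prop:ciclonormalfundamental} ii) together with the fact that on each slice $N(A)\times\{s\}$ the map lands diffeomorphically onto $\partial A_s$ with $\nabla d_A$ as unit normal, so the Jacobian has a definite sign.

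The main obstacle is the careful bookkeeping of orientations and the justification that these change-of-variables manipulations are legitimate for Lipschitz (rather than smooth) maps and currents; in particular one must be sure that $N(A)\times(0,t]$ carries the product orientation for which $\pi\circ\phi$ is degree-one onto $A_t\setminus A$, compatibly with the natural orientation of $N(A)$ and the orientation of $M$. This is essentially the content of Kleinjohann's results quoted in Proposition \ref{prop:ciclonormalfundamental} and the coarea-type description of tubes, so the argument is a matter of assembling those pieces rather than proving anything genuinely new; nonetheless it is where all the care is needed. Once the orientations are pinned down the two displayed identities are immediate and the corollary follows by adding them.
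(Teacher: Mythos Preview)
Your proposal is correct and follows essentially the same argument as the paper: invoke Proposition~\ref{prop:lipschitz1} to identify $\int_{N(A_t)}\omega$ with $\int_{N(A)}\phi_t^*\omega$, decompose $A_t=A\sqcup(A_t\setminus A)$, and use Proposition~\ref{prop:lipschitz2} together with a change-of-variables/coarea argument for the remaining integral. The paper's own proof is terser---it simply cites Propositions~\ref{prop:lipschitz1} and~\ref{prop:lipschitz2} and ``the coarea formula''---whereas you spell out the orientation bookkeeping more explicitly, but the underlying strategy is identical.
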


\begin{proof} Let $\mu = \llbracket \omega,\eta\rrbracket$. By Propositions \ref{prop:lipschitz1} and \ref{prop:lipschitz2} and the coarea formula,
    \begin{align*}
    \mu(A_t) &= \int_{N(A_t)}\omega + \int_{A_t} \eta = \\
    &= \int_{\phi_t(N(A))} \omega + \int_{(\pi \circ \phi)(N(A)\times (0,t])} \eta +
     \int_A \eta = \mathbf{T}_t\mu(A).\qedhere
    \end{align*}
\end{proof}

\begin{ob}
    In the subclass $\mathcal P(M)\subset\mathcal R(M)$ of compact submanifolds with corners, the normal cycle is more naturally defined as follows. For $A \in \mathcal{P}(M)$ and $p \in A$, let
\begin{align*}T_p A&=\left\{\gamma'(0) \in T_p M : \gamma \in  C^1([0,1),A), \gamma(0) = p\right\}\\
N'(A) &= \{(p,v) \in SM : p \in A, \langle v, w\rangle\leq 0\ \forall w \in T_p A\}.
\end{align*}
Let us check that indeed $N'(A)$ equals $N(A)=F_A(U_A)$. Covering $A$ by local charts (locally modelled on $\R^k\times [0,\infty)^l\subset \R^m$), and considering the copy of $N'(A)$ in the cosphere bundle of $M$, one sees that $N'(A)$ is a compact topological manifold. 

It is also easy to show that $N(A)\subset N'(A)$. It then follows by the invariance of domain theorem that $N(A)$ is an open subset of $N'(A)$. Since $N'(A)$ is a Hausdorff space and $N(A)$ is compact, we also have that $N(A)$ is a closed subset of $N'(A)$. Since the number of connected components of both $N(A),N'(A)$ clearly equals the number of connected components of $A$, we necessarily have $N(A)=N'(A)$.
\end{ob}

\subsection{Derivative operators in $\RR{m}$ and $\CC{n}$}

Given $\lambda \in \R$ let $\derivada{\lambda,\R}\colon \vlr\to\vlr$ be the restriction of $\derivada{\RR{m}}$ to $\vlr$, and let $\derivada{\lambda,\C}$ be the restriction of $\derivada{\CC{m}}$ to $\vlc$.

\begin{prop}\label{prop:der_sigma}
\begin{align}
&\derivada{\lambda,\R} \sigma_i^{\lambda} = (m-i)\sigma_{i-1}^{\lambda}-\lambda (i+1)\sigma_{i+1}^{\lambda},& 0 \leq i \leq m-2,\label{eq:der_sigma_real}\\
&\derivada{\lambda,\R} \sigma_{m-1}^{\lambda} = \sigma_{m-2}^{\lambda},\label{eq:der_area}\\
&\derivada{\lambda,\R} \sigma_{m}^{\lambda} = \sigma_{m-1}^{\lambda}\label{eq:der_vol},
\end{align}where it is understood that $\sigma_{-1}^\lambda=0$.
\end{prop}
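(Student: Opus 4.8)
The plan is to reduce the statement to a computation of exterior derivatives on the sphere bundle $S\RR{m}$, followed by a contraction with the Reeb field. By Proposition~\ref{prop:derivada}~i), and since $\sigma_i^\lambda=\llbracket\kappa_i,0\rrbracket$ for $0\le i\le m-1$ while $\sigma_m^\lambda=\nm{vol}_{\RR{m}}=\llbracket 0,d\nm{vol}_{\RR{m}}\rrbracket$, we have $\partial_{\lambda,\R}\sigma_i^\lambda=\llbracket i_T d\kappa_i,0\rrbracket$ for $0\le i\le m-1$ and $\partial_{\lambda,\R}\sigma_m^\lambda=\llbracket i_T\pi^{*}d\nm{vol}_{\RR{m}},0\rrbracket$. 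Since $\llbracket\,\cdot\,,0\rrbracket$ is linear and $\sigma_j^\lambda=\llbracket\kappa_j,0\rrbracket$, it suffices to express $i_Td\kappa_i$ and $i_T\pi^{*}d\nm{vol}_{\RR{m}}$ as linear combinations of the forms $\kappa_j$, up to terms representing the zero valuation.

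To this end I would work in a local orthonormal coframe of $S\RR{m}$ adapted to the bundle structure: the contact form $\alpha$, the horizontal forms $\alpha_1,\dots,\alpha_{m-1}$, the vertical (fibre) forms $\beta_1,\dots,\beta_{m-1}$, and the Levi-Civita connection forms $\omega_{jk}$. Since $\RR{m}$ has constant curvature $\lambda$, the structure equations are $d\alpha=\sum_j\alpha_j\wedge\beta_j$, $d\alpha_j=\beta_j\wedge\alpha-\sum_k\omega_{jk}\wedge\alpha_k$ and $d\beta_j=\lambda\,\alpha\wedge\alpha_j-\sum_k\omega_{jk}\wedge\beta_k$. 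Recall from \cite{FuK} that, up to a normalizing constant, $\kappa_i$ is the $SO(m-1)$-invariant form obtained by antisymmetrizing $\alpha_1\wedge\dots\wedge\alpha_i\wedge\beta_{i+1}\wedge\dots\wedge\beta_{m-1}$; hence $d\kappa_i$ is invariant and the $\omega_{jk}$-terms drop out by antisymmetry. What survives comes only from the summand $\beta_j\wedge\alpha$ of $d\alpha_j$ and the summand $\lambda\,\alpha\wedge\alpha_j$ of $d\beta_j$, so $d\kappa_i=\alpha\wedge\rho_i$ with $\rho_i$ an invariant $(m-1)$-form built from the $\alpha_j,\beta_j$. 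Counting how many horizontal and how many vertical factors of $\kappa_i$ can be differentiated, and using that an invariant $(m-1)$-form of a given $(\alpha,\beta)$-bidegree is — modulo a power of $d\alpha$ — a multiple of a single $\kappa_j$, one arrives at $\rho_i=(m-i)\kappa_{i-1}-(i+1)\lambda\,\kappa_{i+1}$ (with $\kappa_{-1}:=0$), possibly up to an invariant form which, being a power of $d\alpha$, is closed and vanishes on the fibres $S_xM$, hence represents the zero valuation by \eqref{eq:kernel}. When $i=m-1$ the form $\kappa_{m-1}$ is purely horizontal, so it has no vertical factor to differentiate and the $\lambda\kappa_{i+1}$-term does not occur at all; this is why \eqref{eq:der_area} carries no curvature term. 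Finally $\pi^{*}d\nm{vol}_{\RR{m}}=\alpha\wedge\alpha_1\wedge\dots\wedge\alpha_{m-1}$ is a constant multiple of $\alpha\wedge\kappa_{m-1}$.

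It remains to contract with $T$. The Reeb field is the geodesic spray, hence dual to $\alpha$ in the above coframe, so $i_T\alpha=1$ and $i_T\alpha_j=i_T\beta_j=0$; consequently $i_T\kappa_j=0$, $i_T(\alpha\wedge\kappa_j)=\kappa_j$ and $i_Td\alpha=0$. Applying this to the identities of the previous paragraph and discarding the terms that represent the zero valuation, we get $\partial_{\lambda,\R}\sigma_i^\lambda=(m-i)\sigma_{i-1}^\lambda-(i+1)\lambda\,\sigma_{i+1}^\lambda$ for $0\le i\le m-2$, $\partial_{\lambda,\R}\sigma_{m-1}^\lambda=\sigma_{m-2}^\lambda$, and $\partial_{\lambda,\R}\sigma_m^\lambda=\sigma_{m-1}^\lambda$, as claimed. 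I expect the main obstacle to be the bookkeeping in the computation of $d\kappa_i$: one has to carry along the precise normalizing constants of the $\kappa_i$ from \cite{FuK}, together with the combinatorial factors produced by the antisymmetrization, in order to see that the coefficients are exactly $(m-i)$ and $-(i+1)\lambda$ and not merely proportional to them. As an independent check, the identities can be verified on geodesic balls via Corollary~\ref{cor:igualdad_tubos}, since $(B_\rho)_t=B_{\rho+t}$ gives $\partial_{\lambda,\R}\sigma_i^\lambda(B_\rho)=\tfrac{d}{d\rho}\sigma_i^\lambda(B_\rho)$ and the $\sigma_i^\lambda$ have classical closed-form expressions on geodesic balls.
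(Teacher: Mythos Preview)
Your approach is correct and essentially the same as the paper's: the paper simply quotes \cite[Lemma~3.1]{FuLeg} for the exact identity $d\kappa_i=\alpha\wedge\bigl((m-i)\kappa_{i-1}-\lambda(i+1)\kappa_{i+1}\bigr)$, $0\le i\le m-1$ (with $\kappa_m:=0$), and then contracts with $T$ and invokes Proposition~\ref{prop:derivada}, whereas you outline how that lemma is proved from the structure equations. Your hedge about possible extra $d\alpha$-terms is unnecessary (the cited lemma gives the identity on the nose), and you are in fact more explicit than the paper about the case $i=m$, where $\pi^{*}d\mathrm{vol}=\alpha\wedge\kappa_{m-1}$ yields $\partial_{\lambda,\R}\sigma_m^\lambda=\sigma_{m-1}^\lambda$ directly.
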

Let us emphasize that \eqref{eq:der_sigma_real} would make formal sense but does not hold for $i=m-1$.
\begin{proof}
By \cite[Lemma 3.1]{FuLeg}, putting $\kappa_m=0$, we have
\[
d\kappa_{i} = \alpha \wedge\left((m-i) \kappa_{i-1}-\lambda(i+1) \kappa_{i+1}\right), \quad 0 \leq i \leq m-1.
\]
Contracting with $T$ yields
\[
i_T d\kappa_i = (m-i)\kappa_{i-1} - \lambda (i+1)\kappa_{i+1}, \quad \mod(\alpha,d\alpha).
\]
By Proposition \ref{prop:derivada} the result follows. 
\end{proof}

\begin{lema} The following equalities hold modulo $\alpha,d\alpha$ 
\begin{enumerate}[i)]
        \item For $k > 2q$
	    \begin{equation}\label{eq:iTdbeta}
        \begin{split}
		\dfrac{\omega_{2n-k}}{\omega_{2n-k+1}}i_{T} d\beta_{k,q}\equiv &\, 2 (n-k+q+1)\gamma_{k-1,q}\\ &
		+{(k-2q+1)}\beta_{k-1,q-1} \\
		&-\frac{\lambda}{2\pi} {(k-2q+1)(2n-k+1)}\beta_{k+1,q}.
        \end{split}
	    \end{equation}

    	\item For $n > k-q$
		\begin{equation}\label{eq:iTdgamma}
        \begin{split}
		\dfrac{\omega_{2n-2q}}{\omega_{2n-2q+1}}i_T d\gamma_{2q,q} \equiv&\, \beta_{2q-1,q-1} \\
		&-\frac{\lambda}{2\pi} \frac{(q+2)(2n-2q+1)}{n-q}\beta_{2q+1,q} \\
		&-\frac{\lambda}{2\pi} \frac{(n-q-1)(2n-2q+1)}{n-q}\gamma_{2q+1,q}.
        \end{split}
        \end{equation}

\end{enumerate}
\end{lema}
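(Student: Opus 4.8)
The plan is to prove the two contraction identities \eqref{eq:iTdbeta} and \eqref{eq:iTdgamma} by the same mechanism used in Proposition \ref{prop:der_sigma}: namely, one should first have at hand explicit formulas for the exterior derivatives $d\beta_{k,q}$ and $d\gamma_{2q,q}$ modulo the contact ideal generated by $\alpha$ and $d\alpha$, and then contract with the Reeb field $T$. The relevant structure equations for the forms $\beta_{k,q},\gamma_{k,q}$ on $S\CC{n}$ were established in \cite{bfs} (and for $\lambda=0$ already in \cite{BH}); these express $d\beta_{k,q}$ and $d\gamma_{2q,q}$ as $\alpha$ wedged with a combination of the generating forms, plus terms lying in the ideal $(\alpha,d\alpha)$. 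So the first step is to recall or re-derive those structure equations in the precise normalization used here, paying attention to the constants $\omega_{2n-k}/\omega_{2n-k+1}$ that appear on the left-hand sides (these are exactly the normalizing factors relating $\partial$ to $\Lambda$, cf. \eqref{eq:def_Lambda_L_H}).

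Concretely, I would write $d\beta_{k,q}\equiv \alpha\wedge\Theta_{k,q}$ modulo $d\alpha$, with $\Theta_{k,q}$ an explicit linear combination of $\gamma_{k-1,q}$, $\beta_{k-1,q-1}$, $\beta_{k+1,q}$ (and possibly $\gamma_{k+1,q}$, which must turn out to have zero coefficient for $k>2q$), and likewise $d\gamma_{2q,q}\equiv\alpha\wedge\Xi_{q}$ with $\Xi_q$ a combination of $\beta_{2q-1,q-1}$, $\beta_{2q+1,q}$, $\gamma_{2q+1,q}$. Then, using $i_T\alpha=1$ and the fact that $i_T$ kills the generating forms (they are basic/horizontal of the appropriate type, so $i_T\gamma=i_T\beta=0$ modulo the ideal) together with $i_T(\alpha\wedge\Theta)=\Theta-\alpha\wedge i_T\Theta\equiv\Theta$, one obtains $i_Td\beta_{k,q}\equiv\Theta_{k,q}$ and $i_Td\gamma_{2q,q}\equiv\Xi_q$ modulo $(\alpha,d\alpha)$, which is exactly the claimed content once the constants are matched. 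The $\lambda$-dependent terms come from the part of the structure equations that is not present in the flat case; their coefficients involve $\lambda/2\pi$ and the rational factors displayed in the statement, and these must be tracked carefully through the curvature terms in the moving-frame computation on $S\CC{n}$.

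The main obstacle is bookkeeping: the forms $\beta_{k,q},\gamma_{k,q}$ are defined via fairly intricate expressions in a unitary moving coframe, and their exterior derivatives pick up both the flat (Euclidean) contributions and curvature corrections proportional to $\lambda$, with binomial-type coefficients that have to come out exactly as written. In particular one must verify that for $k>2q$ no $\gamma_{k+1,q}$ term survives in $d\beta_{k,q}$ (it does survive in the borderline case $k=2q$, which is why \eqref{eq:iTdbeta} is stated only for $k>2q$), and dually that the $\gamma_{2q+1,q}$ coefficient in \eqref{eq:iTdgamma} is exactly $-\tfrac{\lambda}{2\pi}\tfrac{(n-q-1)(2n-2q+1)}{n-q}$, which requires the hypothesis $n>k-q$ (i.e. $n>q$) to make sense. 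Rather than redo the full frame computation, the cleanest route is to invoke the structure equations from \cite{bfs} verbatim and simply contract; the only genuine work is then matching normalizations and simplifying the resulting constants, which is routine.
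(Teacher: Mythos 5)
Your proposal matches the paper's proof in substance: the paper likewise treats this as a direct computation, contracting the known structure equations for $d\beta_{k,q}$ and $d\gamma_{2q,q}$ (cited there from Lemmas 3.3 and 3.6 of \cite{JuditGil} rather than from \cite{bfs}) with the Reeb field and tracking the normalizing constants. The mechanism you describe — $i_T(\alpha\wedge\Theta)\equiv\Theta$ modulo the contact ideal, plus careful bookkeeping of the $\lambda$-terms — is exactly what the paper does.
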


\begin{proof}
    This is a straightforward computation using \cite[ Lemma 3.3, Lemma 3.6]{JuditGil}.
\end{proof}

\begin{prop}\label{prop:der_mu}
For $k > 2q$
\begin{align}
\dfrac{\omega_{2n-k}}{\omega_{2n-k+1}}\derivada{\lambda,\C}\mu_{k, q}^\lambda&=(k-2q+1) \mu_{k-1,q-1}^\lambda + 2 (n-k+q+1)\mu_{k-1,q}^\lambda \label{eq:der_kq}\\
&- \dfrac{\lambda}{2 \pi } (2 n-k+1)\left((k-2 q+1) \mu _{k+1,q}^\lambda +2 (q+1) \mu _{k+1,q+1}^\lambda \right)\notag
\end{align}
and
\begin{equation}\label{eq:der_2qq}
\dfrac{\omega_{2n-2q}}{\omega_{2n-2q+1}} \derivada{\lambda,\C} \mu_{2q,q}^\lambda = \mu_{2q-1,q-1} - (2n-2q+1) \dfrac{\lambda}{2 \pi }\mu_{2q+1,q}.
\end{equation}
\end{prop}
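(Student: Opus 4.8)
The plan is to deduce Proposition~\ref{prop:der_mu} from the preceding Lemma together with Proposition~\ref{prop:derivada}~i), which identifies $\partial_{\lambda,\C}\llbracket\omega,0\rrbracket$ with $\llbracket i_T\,d\omega,0\rrbracket$ (the $\eta$-part being zero for the forms $\beta_{k,q},\gamma_{k,q}$). Concretely, since $\mu_{k,q}^\lambda=\llbracket\beta_{k,q},0\rrbracket$ for $k>2q$, applying $\partial_{\lambda,\C}$ and multiplying by $\omega_{2n-k}/\omega_{2n-k+1}$ turns the left-hand side of \eqref{eq:der_kq} into $\llbracket\,\tfrac{\omega_{2n-k}}{\omega_{2n-k+1}}i_T\,d\beta_{k,q}\,,0\rrbracket$, which by \eqref{eq:iTdbeta} equals
\[
\llbracket\,2(n-k+q+1)\gamma_{k-1,q}+(k-2q+1)\beta_{k-1,q-1}-\tfrac{\lambda}{2\pi}(k-2q+1)(2n-k+1)\beta_{k+1,q}\,,0\rrbracket,
\]
modulo the ideal generated by $\alpha,d\alpha$, which is irrelevant since $\llbracket\alpha\wedge\xi,0\rrbracket=\llbracket d\alpha\wedge\xi,0\rrbracket=0$ in $\mathcal V(M)$ (the normal cycle is legendrian, cf. Proposition~\ref{prop:lipschitz2}).

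First I would rewrite each term of this expression as a valuation. The terms $\beta_{k-1,q-1}$ and $\beta_{k+1,q}$ give $\mu_{k-1,q-1}^\lambda$ and $\mu_{k+1,q}^\lambda$ directly via \eqref{eq:defmukq} (note $k-1>2(q-1)$ and $k+1>2q$ when $k>2q$). The only subtlety is the term $\gamma_{k-1,q}$: if $k-1>2q$ this is not a basis form at all, but one still has $\llbracket\gamma_{k-1,q},0\rrbracket$ well-defined, and the identity $\llbracket\gamma_{k-1,q},0\rrbracket=\mu_{k-1,q}^\lambda$ holds — this is exactly the content of how $\gamma$-forms relate to the $\mu^\lambda$ basis in \cite{bfs} (for $k-1=2q$ it is the leading term of \eqref{eq:defmu2qq}, and for $k-1>2q$ the relation $\llbracket\gamma_{k-1,q},0\rrbracket=\mu^\lambda_{k-1,q}$ should be recorded or cited). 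Substituting, and then replacing $\mu_{k+1,q}^\lambda$ and $\mu_{k+1,q+1}^\lambda$ — wait: the right-hand side of \eqref{eq:der_kq} already groups the last piece as $(k-2q+1)\mu^\lambda_{k+1,q}+2(q+1)\mu^\lambda_{k+1,q+1}$, whereas \eqref{eq:iTdbeta} only produces $\beta_{k+1,q}$. So the needed input is the identity $\llbracket\beta_{k+1,q},0\rrbracket=(k-2q+1)\mu^\lambda_{k+1,q}+2(q+1)\mu^\lambda_{k+1,q+1}$ divided appropriately — in fact I expect that the correct reading is $\llbracket\beta_{k+1,q},0\rrbracket$ expands this way because of the kernel relations \eqref{eq:kernel} among the curved forms. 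I would make this explicit: the combination appearing is precisely $L\mu_{k,q}$ from \eqref{eq:L_mu} transported by $\mathcal F_{\lambda,\C}$, which is the cleanest way to see it.

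For \eqref{eq:der_2qq}, the same scheme applies starting from $\mu_{2q,q}^\lambda=\sum_i(\lambda/\pi)^i\tfrac{(q+i)!}{q!}\llbracket\gamma_{2q+2i,q+i},0\rrbracket+(\lambda/\pi)^{n-q}\tfrac{n!}{q!}\llbracket0,d\mathrm{vol}\rrbracket$. Applying $\partial_{\lambda,\C}$ term by term, the $d\mathrm{vol}$ summand contributes via Proposition~\ref{prop:derivada}~i) with $\eta=d\mathrm{vol}$, namely $\llbracket i_T\pi^*d\mathrm{vol},0\rrbracket$, and each $\gamma_{2q+2i,q+i}$ summand contributes $\llbracket i_T\,d\gamma_{2q+2i,q+i},0\rrbracket$ given by \eqref{eq:iTdgamma}. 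I would then sum the resulting series: the factors $(\lambda/\pi)^i(q+i)!/q!$ are designed so that the $\beta_{2q+1,\cdot}$ and $\gamma_{2q+1,\cdot}$ contributions telescope/collapse, leaving only $\llbracket\beta_{2q-1,q-1},0\rrbracket=\mu_{2q-1,q-1}$ (note $\lambda=0$ coefficient, hence no superscript) from $i=0$ and a single surviving $-(2n-2q+1)\tfrac{\lambda}{2\pi}\mu_{2q+1,q}$ term, with the $\gamma_{2q+1,q}$ pieces absorbed back into the definition \eqref{eq:defmu2qq} of $\mu^\lambda_{2q+1,q}$ shifted by one. This bookkeeping — matching the series coefficients so that the $\gamma$-tower recombines into the $\mu^\lambda_{2q+1,q}$ defined by its own $\gamma$-tower, and checking the boundary term $(\lambda/\pi)^{n-q}$ against $i_T d\mathrm{vol}$ — is the main obstacle: it is where the specific normalizations in \eqref{eq:defmu2qq} are forced, and it requires care with the index $n>k-q$ hypothesis in \eqref{eq:iTdgamma} at the top of the tower. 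Everything else is linear algebra in $\mathcal V_{\lambda,\C}^n$ together with the already-established Lemma.
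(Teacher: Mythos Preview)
Your overall strategy is right --- apply Proposition~\ref{prop:derivada}~i) to the defining forms and use the preceding Lemma --- but there is a concrete error in the first part that derails the computation. You assert that $\llbracket\gamma_{k-1,q},0\rrbracket=\mu_{k-1,q}^\lambda$. This is false for $\lambda\neq 0$: by \cite[Proposition~2.7]{JuditGil} (recorded in the paper as \eqref{eq:gammaglob}) one has
\[
\llbracket\gamma_{k-1,q},0\rrbracket \;=\; \mu_{k-1,q}^\lambda \;-\; \lambda\,\dfrac{(2n-k+1)(q+1)}{2\pi(n-k+q+1)}\,\mu_{k+1,q+1}^\lambda.
\]
Multiplying by the coefficient $2(n-k+q+1)$ coming from \eqref{eq:iTdbeta}, the correction term becomes exactly $-\tfrac{\lambda}{2\pi}(2n-k+1)\cdot 2(q+1)\,\mu_{k+1,q+1}^\lambda$, which is precisely the ``missing'' $\mu_{k+1,q+1}^\lambda$ contribution you were searching for. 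Correspondingly, your proposed fix --- that $\llbracket\beta_{k+1,q},0\rrbracket$ should expand as $(k-2q+1)\mu_{k+1,q}^\lambda+2(q+1)\mu_{k+1,q+1}^\lambda$ via the kernel relations \eqref{eq:kernel} --- is simply wrong: by definition \eqref{eq:defmukq} one has $\llbracket\beta_{k+1,q},0\rrbracket=\mu_{k+1,q}^\lambda$ on the nose. Observing that the target combination equals $\mathcal F_{\lambda,\C}(L\mu_{k,q})$ is true but does not help, since $\mathcal F_{\lambda,\C}$ is only a linear isomorphism of vector spaces and carries no information about relations among the forms $\beta,\gamma$.

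For \eqref{eq:der_2qq} your outline is in the right spirit and matches the paper's approach, but again \eqref{eq:gammaglob} is the missing ingredient: one first converts each $\llbracket i_T\,d\gamma_{2j,j},0\rrbracket$ (via \eqref{eq:iTdgamma} together with \eqref{eq:gammaglob} applied to the $\gamma_{2j+1,j}$ term) into a combination $a_j\mu_{2j-1,j-1}^\lambda+b_j\tfrac{\lambda}{\pi}\mu_{2j+1,j}^\lambda+c_j\tfrac{\lambda^2}{\pi^2}\mu_{2j+3,j+1}^\lambda$, and only then does the weighted sum over $i$ telescope through the three-term identity $j(j-1)a_j+(j-1)b_{j-1}+c_{j-2}=0$. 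Your description of $\mu_{2q+1,q}^\lambda$ as ``defined by its own $\gamma$-tower'' is also off: since $2q+1>2q$, it is defined directly by $\beta_{2q+1,q}$ via \eqref{eq:defmukq}, not by \eqref{eq:defmu2qq}.
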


\begin{proof}
Equality \eqref{eq:der_kq} follows from Proposition \ref{prop:derivada}, using \eqref{eq:iTdbeta}  and the following (see \cite[Proposition 2.7]{JuditGil})
\begin{equation}\label{eq:gammaglob}
\llbracket \gamma_{k,q},0 \rrbracket = \mu_{k,q}^\lambda - \lambda \dfrac{(2n-k)(q+1)}{2\pi (n-k+q)}\mu_{k+2,q+1}^\lambda, \quad n-k+q > 0.
\end{equation}

Let us now prove \eqref{eq:der_2qq}. 
Note first that from \eqref{eq:iTdgamma} and \eqref{eq:gammaglob} we get

	\begin{equation}\label{eq:iTgammaglob}
	\begin{split}
	\llbracket i_T d\gamma_{2j,j},0\rrbracket &=\dfrac{\omega_{2n-2j+1}}{\omega_{2n-2j}} \mu_{2j-1,j-1}^\lambda \\
	&-\dfrac{\omega_{2n-2j+1}}{\omega_{2n-2j}} \dfrac{(2n-2j+1)(n+1)}{n-j}\dfrac{\lambda}{2\pi}\mu_{2j+1,j}^\lambda \\
	&+ \dfrac{\omega_{2n-2j+1}}{\omega_{2n-2j}}\dfrac{(2n-2j+1)(2n-2j-1)(j+1)}{n-j}\dfrac{\lambda^2}{4\pi^2}\mu_{2j+3,j+1}^\lambda\\
        &=: a_j \mu_{2j-1,j-1}^\lambda+b_j\frac{\lambda}{\pi}\mu_{2j+1,j}^\lambda+c_j\frac{\lambda^2}{\pi^2}\mu_{2j+3,j+1}^\lambda
	\end{split}
	\end{equation}

Then, by Proposition \ref{prop:derivada} and observing that $a_{n} = 2$

\begin{align*}
\derivada{\lambda,\C}\mu_{2q,q}^\lambda &= \sum_{i = 0}^{n-q-1}\left(\dfrac{\lambda}{\pi}\right)^i \dfrac{(q+i)!}{q!}\llbracket i_T d\gamma_{2q+2i,q+i},0\rrbracket + 2 \left(\dfrac{\lambda}{\pi}\right)^{n-q}\frac{n!}{q!}\mu_{2n-1,n-1}^\lambda\\
&= \sum_{i = 0}^{n-q}\left(\dfrac{\lambda}{\pi}\right)^i \dfrac{(q+i)!}{q!}a_{q+i}\mu_{2q+2i-1,q+i-1}^\lambda \\
    &+\sum_{i = 0}^{n-q-1}\left(\dfrac{\lambda}{\pi}\right)^{i+1} \dfrac{(q+i)!}{q!}b_{q+i}\mu_{2q+2i+1,q+i}^\lambda \\
    &+\sum_{i = 0}^{n-q-2}\left(\dfrac{\lambda}{\pi}\right)^{i+2} \dfrac{(q+i)!}{q!}c_{q+i}\mu_{2q+2i+3,q+i+1}^\lambda \\
    &= a_q\mu_{2q-1,q-1} + \frac{\lambda}{\pi}((q+1)a_{q+1} + b_q)\mu_{2q+1,q}^\lambda \\
    &+ \sum_{j = 2}^{n-q} \left(\dfrac{\lambda}{\pi}\right)^j\left(\dfrac{(q+j)!}{q!}a_{q+j} + \dfrac{(q+j-1)!}{q!}b_{q+j-1} +\dfrac{(q+j-2)!}{q!}c_{q+j-2}\right) \mu_{2q+2j-1,q+j-1}^\lambda
\end{align*}

A straightforward computation using $k\omega_k=2\pi\omega_{k-2}$ shows 
\begin{equation*}
    j(j-1)a_j+(j-1)b_{j-1}+c_{j-2}=0
\end{equation*}
and the result follows.
\end{proof}

Note that by \eqref{eq:def_Lambda_L_H} the linear map $\Phi_0\: \Val^{U(n)}\to\Val^{U(n)}$ given by $\restr{\Phi_0}{\Val_k^{U(n)}}=\omega_{2n-k}\nm{id}$ satisfies \[\partial_{0,\C}=\Phi_0\circ \Lambda \circ\Phi_0^{-1}.\]
Remarkably, a similar identity holds for all $\lambda$, which will be crucial for our determination of tube formulas in $\CC{n}$.
\begin{teo}\label{teo:formulasor}
The linear isomorphism
\[
\Phi_\lambda=\mathcal F_{\lambda,\C}\circ \Phi_0\:\nm{Val}^{U(n)} \hasta \mathcal{V}^n_{\lambda,\C}, \qquad \mu_{k,q}\longmapsto \omega_{2n-k}\mu_{k,q}^\lambda.
\]
fulfills
\[
\derivada{\lambda,\C} = \Phi_\lambda\circ\left(\Lambda - \lambda L\right)\circ\Phi_\lambda^{-1}.
\]
\end{teo}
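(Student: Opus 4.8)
The plan is to verify the claimed intertwining relation directly on the basis $\{\mu_{k,q}\}$ of $\Val^{U(n)}$, reducing everything to the explicit formulas already at hand. Since $\Phi_\lambda$ is a linear isomorphism and both sides of the desired identity are linear operators, it suffices to check that for every $k,q$ with $\max\{0,k-n\}\le q\le \frac k2\le n$ we have $\derivada{\lambda,\C}(\Phi_\lambda \mu_{k,q}) = \Phi_\lambda((\Lambda-\lambda L)\mu_{k,q})$, i.e.
\[
\omega_{2n-k}\,\derivada{\lambda,\C}\mu_{k,q}^\lambda = \Phi_\lambda\bigl((\Lambda-\lambda L)\mu_{k,q}\bigr).
\]
On the right-hand side, I would expand $\Lambda\mu_{k,q}$ and $L\mu_{k,q}$ using Proposition \ref{propformulas}, equations \eqref{eq:L_mu} and \eqref{eq:Lambda_mu}, and then apply $\Phi_\lambda$, which multiplies $\mu_{k-1,q}^\lambda$ and $\mu_{k-1,q-1}^\lambda$ by $\omega_{2n-k+1}$ and multiplies $\mu_{k+1,q}^\lambda,\mu_{k+1,q+1}^\lambda$ by $\omega_{2n-k-1}$. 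On the left-hand side I would use Proposition \ref{prop:der_mu}: for $k>2q$ this is \eqref{eq:der_kq}, and for $k=2q$ it is \eqref{eq:der_2qq}.

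Concretely, in the case $k>2q$, multiplying \eqref{eq:der_kq} by $\omega_{2n-k}$ gives
\[
\omega_{2n-k+1}\derivada{\lambda,\C}\mu_{k,q}^\lambda = \omega_{2n-k+1}\bigl((k-2q+1)\mu_{k-1,q-1}^\lambda + 2(n-k+q+1)\mu_{k-1,q}^\lambda\bigr) - \frac{\lambda}{2\pi}(2n-k+1)\omega_{2n-k+1}\bigl((k-2q+1)\mu_{k+1,q}^\lambda+2(q+1)\mu_{k+1,q+1}^\lambda\bigr),
\]
wait---more carefully, I multiply \eqref{eq:der_kq} by $\omega_{2n-k}$, obtaining $\omega_{2n-k}\derivada{\lambda,\C}\mu_{k,q}^\lambda$ on the left and, on the right, the $\Lambda$-part matches $\Phi_\lambda(\Lambda\mu_{k,q})$ term-by-term (the coefficient $\omega_{2n-k+1}$ appearing exactly as the image weight of $\mu_{k-1,\cdot}^\lambda$ under $\Phi_\lambda$), while the $\lambda$-part should match $-\lambda\,\Phi_\lambda(L\mu_{k,q})$; here the key numerical check is that $\frac{\omega_{2n-k}}{\omega_{2n-k-1}}\cdot\frac{2n-k+1}{2\pi} = 1$, which follows from the identity $\ell\,\omega_\ell = 2\pi\,\omega_{\ell-2}$ used repeatedly in the paper. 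The case $k=2q$ requires the analogous bookkeeping with \eqref{eq:der_2qq}, noting that $\Lambda\mu_{2q,q}$ has only the $\mu_{2q-1,q-1}$ term (since $n-k+q+1 = n-q+1$ appears, but actually one must check the coefficient $2(n-k+q+1)$ at $k=2q$ against the $\mu_{2q-1,q}^\lambda$ term being absent from \eqref{eq:der_2qq}---this is where \eqref{eq:der_2qq} genuinely differs in shape from \eqref{eq:der_kq}), and $L\mu_{2q,q}$ contributes $\mu_{2q+1,q}$ with coefficient $(k-2q+1)=1$ plus a $\mu_{2q+1,q+1}$ term with coefficient $2(q+1)$; the vanishing of the unwanted terms in \eqref{eq:der_2qq} must be reconciled with the definition \eqref{eq:defmu2qq} of $\mu_{2q,q}^\lambda$ as a combination of several $\llbracket\gamma_{\cdot,\cdot},0\rrbracket$'s, which is precisely the content of the telescoping computation already carried out in the proof of Proposition \ref{prop:der_mu}.

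I would organize the proof as: (1) reduce to checking on basis elements by linearity; (2) handle $k>2q$ by comparing $\omega_{2n-k}\cdot$\eqref{eq:der_kq} against $\Phi_\lambda(\Lambda\mu_{k,q}) - \lambda\Phi_\lambda(L\mu_{k,q})$ computed from \eqref{eq:L_mu}--\eqref{eq:Lambda_mu}, with the single nontrivial arithmetic fact being $(2n-k+1)\omega_{2n-k} = 2\pi\,\omega_{2n-k-2}$ rewritten as $\frac{\lambda}{2\pi}(2n-k+1)\omega_{2n-k} = \lambda\,\omega_{2n-k-2} = \lambda\,\omega_{2(n-(k+1))}$---no wait, $2n-k-2 = 2n-(k+2)$, and the image weight of $\mu_{k+1,\cdot}^\lambda$ is $\omega_{2n-k-1}$; so the relevant identity is $\frac{\omega_{2n-k}}{2\pi} = \frac{\omega_{2n-k-1}}{2n-k+1}$ after multiplying numerator and denominator appropriately, i.e. one uses $(2n-k+1)\omega_{2n-k+1} = 2\pi\omega_{2n-k-1}$; (3) handle $k=2q$ using \eqref{eq:der_2qq} and the special value $(k-2q+1)=1$, $2(q+1)$ for the $L$-coefficients, checking the $\mu_{2q+1,q+1}$ and $\mu_{2q-1,q}$ contributions cancel or are absorbed as in Proposition \ref{prop:der_mu}.

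The main obstacle I expect is the case $k=2q$: the formula \eqref{eq:der_2qq} is structurally different from \eqref{eq:der_kq} (it has fewer terms), yet $\Lambda\mu_{2q,q}$ and $L\mu_{2q,q}$ from Proposition \ref{propformulas} superficially produce the same shape of terms as for general $k$. The resolution hinges on the precise definition \eqref{eq:defmu2qq} of $\mu_{2q,q}^\lambda$ and the telescoping identity $j(j-1)a_j + (j-1)b_{j-1} + c_{j-2} = 0$ established inside the proof of Proposition \ref{prop:der_mu}; I would need to track how the $\Phi_\lambda$-weights $\omega_{2n-k\pm1}$ interact with the powers $(\lambda/\pi)^i$ appearing in \eqref{eq:defmu2qq} so that the image $\Phi_\lambda((\Lambda-\lambda L)\mu_{2q,q})$ collapses to exactly the two-term expression in \eqref{eq:der_2qq}. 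All remaining work is the routine substitution $\ell\omega_\ell = 2\pi\omega_{\ell-2}$ to convert ratios of $\omega$'s into the rational coefficients appearing in the $\mathfrak{sl}_2$-action.
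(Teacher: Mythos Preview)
Your approach is exactly the paper's: verify $\partial_{\lambda,\C}\circ\Phi_\lambda=\Phi_\lambda\circ(\Lambda-\lambda L)$ on the basis $\{\mu_{k,q}\}$ by plugging in Proposition~\ref{propformulas} and Proposition~\ref{prop:der_mu} and reducing the $\omega$-ratios via $\ell\,\omega_\ell=2\pi\,\omega_{\ell-2}$. The one place you are making life harder than necessary is the case $k=2q$. The apparent structural discrepancy between \eqref{eq:der_2qq} and the four-term pattern of \eqref{eq:der_kq} disappears once you note that the ``missing'' terms on the $\mathfrak{sl}_2$ side are simply out-of-range: $\mu_{2q-1,q}$ violates $q\le\lfloor (2q-1)/2\rfloor$ and $\mu_{2q+1,q+1}$ violates $q+1\le\lfloor(2q+1)/2\rfloor$, so both are zero by convention. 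Hence $(\Lambda-\lambda L)\mu_{2q,q}=\mu_{2q-1,q-1}-\lambda\,\mu_{2q+1,q}$ automatically, and comparing with \eqref{eq:der_2qq} needs only the same identity $(2n-2q+1)\omega_{2n-2q+1}=2\pi\,\omega_{2n-2q-1}$ already used in the generic case. There is no need to revisit the telescoping from the proof of Proposition~\ref{prop:der_mu} or to track powers of $\lambda/\pi$ from \eqref{eq:defmu2qq}; that work has been packaged into the statement \eqref{eq:der_2qq} and you should just use it.
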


\begin{proof}
By combining Proposition \ref{prop:der_mu}, Proposition \ref{propformulas} and the fact $\frac{\omega_n}{\omega_{n-2}} = \frac{2\pi}{n}$, this is straightforward to check:
\begin{align*}
    \Phi_\lambda\circ (\Lambda - \lambda L)(\mu_{k,q}) &= (k-2q+1)\omega_{2n-k+1}\mu_{k-1,q-1}^\lambda +2(n-k+q+1)\omega_{2n-k+1}\mu_{k-1,q}^\lambda \\
    &- \lambda(k-2q+1)\omega_{2n-k-1}\mu_{k+1,q}^\lambda - 2\lambda(q+1)\omega_{2n-k-1}\mu_{k+1,q+1}^\lambda \\
    &= \omega_{2n-k+1}\left((k-2q+1) \mu_{k-1,q-1}^\lambda + 2 (n-k+q+1)\mu_{k-1,q}^\lambda\right. \\
    &- \dfrac{\lambda}{2 \pi } (2 n-k+1)\left((k-2 q+1) \mu _{k+1,q}^\lambda +2 (q+1) \mu _{k+1,q+1}^\lambda \right) \\
    &= \omega_{2n-k}\partial_{\lambda,\C}\mu_{k,q}^\lambda = \partial_{\lambda,\C}\circ \Phi_\lambda(\mu_{k,q}).\qedhere
\end{align*}

\end{proof}

A similar phenomenon holds in real space forms, but restricted to a hyperplane of $\mathcal V_{\lambda,\R}^m$.
\begin{teo}\label{teo:formulasor_real}
The linear monomorphism
\[
\Psi_\lambda : \nm{Val}^{O(m)} \hasta \mathcal{V}^{m+1}_{\lambda,\R}, \qquad \mu_k \longmapsto \sigma_k^{\lambda}
\]
fulfills
\[
\derivada{\lambda,\R} \circ \Psi_\lambda = \Psi_\lambda\circ\left(\Lambda- \lambda L\right).
\]
\end{teo}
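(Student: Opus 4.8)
The plan is to verify the identity directly on the basis $\{\mu_0,\dots,\mu_m\}$ of $\Val^{O(m)}$. Since both $\derivada{\lambda,\R}\circ\Psi_\lambda$ and $\Psi_\lambda\circ(\Lambda-\lambda L)$ are linear maps $\Val^{O(m)}\to\mathcal{V}^{m+1}_{\lambda,\R}$, it suffices to check that they agree on each $\mu_k$. The ingredients are Proposition \ref{propformulas}, which on $\Val^{O(m)}$ reads $L\mu_k=(k+1)\mu_{k+1}$ and $\Lambda\mu_k=(m-k+1)\mu_{k-1}$ with the convention $\mu_{m+1}=0$, together with Proposition \ref{prop:der_sigma}, applied now to $\RR{m+1}$ (so with the ambient dimension equal to $m+1$ rather than $m$).

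First I would compute the right-hand side: $\Psi_\lambda\circ(\Lambda-\lambda L)\mu_k=(m-k+1)\sigma_{k-1}^\lambda-\lambda(k+1)\sigma_{k+1}^\lambda$ for $0\le k\le m-1$, and $\Psi_\lambda\circ(\Lambda-\lambda L)\mu_m=\sigma_{m-1}^\lambda$ because $L\mu_m=0$ in $\Val^{O(m)}$. Then I would compute the left-hand side $\derivada{\lambda,\R}\sigma_k^\lambda$ using Proposition \ref{prop:der_sigma} with ambient dimension $m+1$: for $0\le k\le m-1$ this gives $((m+1)-k)\sigma_{k-1}^\lambda-\lambda(k+1)\sigma_{k+1}^\lambda$, which is the same expression since $(m+1)-k=m-k+1$; and \eqref{eq:der_area} (with ambient dimension $m+1$) gives $\derivada{\lambda,\R}\sigma_m^\lambda=\sigma_{m-1}^\lambda$, matching again. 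Comparing the two sides on every basis vector then finishes the proof.

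The only step that is not purely mechanical — and hence the one to be careful about — is the top degree $k=m$. On the $\Val^{O(m)}$ side the term $-\lambda(m+1)\mu_{m+1}$ that one would naively write down is simply absent, because $\mu_{m+1}$ does not exist, i.e.\ $L$ annihilates the top intrinsic volume; on the $\mathcal{V}^{m+1}_{\lambda,\R}$ side the matching fact is precisely that \eqref{eq:der_sigma_real} fails at its top index ($i=m$ in dimension $m+1$) and is replaced by \eqref{eq:der_area}. This is also the structural reason why $\Psi_\lambda$ can only be a monomorphism: its image is the $\derivada{\lambda,\R}$-invariant hyperplane spanned by $\sigma_0^\lambda,\dots,\sigma_m^\lambda$, on which $\derivada{\lambda,\R}$ is conjugate to $\Lambda-\lambda L$ via $\Psi_\lambda$, whereas $\sigma_{m+1}^\lambda=\nm{vol}$ lies outside it. I do not expect any genuine obstacle beyond keeping track of this dimension shift from $m$ to $m+1$.
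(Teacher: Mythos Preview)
Your proposal is correct and follows essentially the same route as the paper: both verify the identity on the basis $\{\mu_k\}$ using Proposition~\ref{propformulas} for $\Lambda,L$ and Proposition~\ref{prop:der_sigma} (shifted to ambient dimension $m+1$) for $\partial_{\lambda,\R}$. If anything, you are more explicit than the paper about the top-degree case $k=m$, where $L\mu_m=0$ on the source side matches \eqref{eq:der_area} on the target side; the paper's displayed computation glosses over this edge case.
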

\begin{proof}
By Proposition \ref{prop:der_sigma} and Theorem \ref{teo:formulasplanas}
\[
\begin{aligned}
\derivada{\lambda,\R} \circ \Psi_\lambda(\mu_k) &= \derivada{\lambda,\R}\sigma_k^{\lambda} = (m-k+1)\sigma_{k-1}^\lambda - \lambda (k+1)\sigma_{k+1}^\lambda \\
&= \Psi_\lambda((m-k+1)\mu_{k-1} - \lambda (k+1)\mu_{k+1}) \\
&= \Psi_\lambda(\Lambda\mu_k - \lambda L\mu_{k}).
\end{aligned}
\]
\end{proof}

Note the difference of dimensions between the source and the target of $\Psi_\lambda$. We will show that there is no isomorphism between $\Val^{O(m)}$ and $\vlr$ intertwining $\partial$ and $\Lambda-\lambda L$. This is essentially due to the fact that \eqref{eq:der_area} and \eqref{eq:der_vol} differ from \eqref{eq:der_sigma_real}.

\section{A model space for tube formulas}

We next perform some abstract computations that will easily lead to the tube formulas in both complex and real space forms via \eqref{eq:conmutan} and \eqref{eq:conmutan_real}. The same approach will allow us to determine the kernel, the image, and the spectrum of the derivative operator $\partial$ on these spaces.

\subsection{A system of differential equations}\label{subsec:ode}

It is well-known that the operators $X = x\frac{\partial}{\partial y}$, $Y = y \frac{\partial}{\partial x}$ and $H=[X,Y]$ induce an $\mathfrak{sl}_2$-structure on $\C[x,y]$. The decomposition into irreducible components is $\C[x,y]=\bigoplus_{m\geq 0} V^{(m)}$ where $V^{(m)}$ is the subspace of $m$-homogeneous polynomials:
\[
V^{(m)} := \nm{span}_\C\displaystyle\{x^ky^{m-k}\}_{k = 0}^m.
\]
One has $H(x^ky^{m-k})=(m-2k) x^k y^{m-k}$.

Motivated by Theorem \ref{teo:formulasor}, we consider $Y_\lambda = Y - \lambda X$, {which is a derivation on $\C[x,y]$. It will be sometimes convenient to consider the monomials ${m\choose k}x^ky^{m-k}$. In these terms

\begin{align}
{m\choose k}Y_\lambda(x^ky^{m-k}) = (m-k+1){m\choose k-1}x^{k-1}y^{m-k+1} - \lambda(k+1){m\choose k+1} x^{k+1}y^{m-k-1}.\label{eq:Ylambda_monomio}
\end{align}
}

Our goal here is to solve the following Cauchy problem: find $p_k\:\R\to V^{(m)}$ such that
\begin{equation}\label{cauchyproblem}
 \partial_t p_k(t)=Y_\lambda p_k(t) , \quad p_k(0) = \binom{m}{k}x^{k}y^{m-k}, \quad 0 \leq k \leq m,
\end{equation}
i.e. to compute\begin{equation}\label{solucioncauchy}
p_k(t) = \binom{m}{k}\exp(tY_\lambda)(x^{k}y^{m-k}), \quad 0 \leq k \leq m.
\end{equation} 

We will use the standard notation 

\begin{equation}\label{eq:sin}
\operatorname{sin}_{\lambda}(t):=\left\{\begin{array}{cc}
\dfrac{\sin (\sqrt{\lambda} t)}{\sqrt{\lambda}} & \lambda>0, \\
\\
t & \lambda=0, \\
\\
 \dfrac{\sinh (\sqrt{|\lambda|} t)}{\sqrt{|\lambda|}} & \lambda<0,
\end{array}\right.
\end{equation}
which is an analytic function in both $\lambda$ and $t$, and $\cos_\lambda(t) := \frac{d}{dt}\sin_\lambda(t)$.
\begin{prop}
For any $\lambda,t \in \R$, we have
\[
\exp(tY_\lambda)x = x \cos_\lambda(t) + y \sin_\lambda(t){=:u}, \quad \exp(tY_\lambda)y = y \cos_\lambda(t)-\lambda x \sin_\lambda(t){=:v}.
\]
\end{prop}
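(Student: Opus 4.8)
The statement asserts that the flow of the derivation $Y_\lambda = Y - \lambda X$ on $\C[x,y]$ acts on the two linear generators $x,y$ by the "rotation-like" matrix built from $\sin_\lambda,\cos_\lambda$. The natural approach is to reduce the claim to an ODE in the two-dimensional space $V^{(1)}=\operatorname{span}_\C\{x,y\}$, which is preserved by $Y_\lambda$ since $Y_\lambda$ is a derivation homogeneous of degree $0$ in the grading by total degree (it sends $x$ to $y$ and $y$ to $-\lambda x$, both of degree $1$). Set $u(t):=\exp(tY_\lambda)x$ and $v(t):=\exp(tY_\lambda)y$. By definition of the exponential of the (locally nilpotent, degree-preserving) operator $Y_\lambda$, the curves $u,v\colon\R\to V^{(1)}$ solve the linear system $\partial_t u = Y_\lambda u$, $\partial_t v = Y_\lambda v$ with $u(0)=x$, $v(0)=y$. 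Using $Y_\lambda x = Yx-\lambda Xx = y - \lambda\cdot 0 = y$ and $Y_\lambda y = Yy - \lambda Xy = 0 - \lambda x = -\lambda x$, and writing $u = a(t)x+b(t)y$, $v=c(t)x+d(t)y$, this becomes the scalar system
\[
a' = -\lambda b,\quad b' = a,\quad c' = -\lambda d,\quad d' = c,
\]
with $(a,b)(0)=(1,0)$ and $(c,d)(0)=(0,1)$.

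The second step is simply to verify that $a(t)=\cos_\lambda(t)$, $b(t)=\sin_\lambda(t)$, $c(t)=-\lambda\sin_\lambda(t)$, $d(t)=\cos_\lambda(t)$ solve this system with the correct initial data. From the definition \eqref{eq:sin} one has $\sin_\lambda(0)=0$, $\cos_\lambda(0)=1$ (for all $\lambda$, including $\lambda=0$ where $\sin_\lambda(t)=t$), and the pair of identities $\frac{d}{dt}\sin_\lambda(t)=\cos_\lambda(t)$ (by the very definition of $\cos_\lambda$) and $\frac{d}{dt}\cos_\lambda(t)=-\lambda\sin_\lambda(t)$; the latter is immediate by differentiating the three branches of \eqref{eq:sin}, or by noting that $\sin_\lambda$ satisfies $f''+\lambda f=0$. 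Plugging in: $a'=\frac{d}{dt}\cos_\lambda = -\lambda\sin_\lambda = -\lambda b$; $b'=\frac{d}{dt}\sin_\lambda = \cos_\lambda = a$; $c' = -\lambda\frac{d}{dt}\sin_\lambda = -\lambda\cos_\lambda = -\lambda d$; $d' = \frac{d}{dt}\cos_\lambda = -\lambda\sin_\lambda = c$. Initial conditions and the claimed formulas for $u,v$ follow at once.

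The final step is an appeal to uniqueness for linear ODEs with analytic (indeed constant, after the analytic reparametrization) coefficients: since both the candidate curves and the curves $t\mapsto\exp(tY_\lambda)x$, $t\mapsto\exp(tY_\lambda)y$ solve the same first-order linear system with the same initial values in the finite-dimensional space $V^{(1)}$, they coincide for all $t\in\R$. Equivalently, one can observe that $\exp(tY_\lambda)$ restricted to $V^{(1)}$ is the matrix exponential of $t$ times the matrix $\left(\begin{smallmatrix}0 & -\lambda\\ 1 & 0\end{smallmatrix}\right)$, and compute that exponential directly from its power series, which rearranges into the even/odd series defining $\cos_\lambda$ and $\sin_\lambda$.

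**Main obstacle.** There is essentially no obstacle here: the only point requiring the slightest care is justifying that $\exp(tY_\lambda)$ is a well-defined operator and that differentiating it termwise is legitimate, i.e. that the series $\sum_i \frac{t^i}{i!}Y_\lambda^i$ converges and defines a flow. Since $Y_\lambda$ preserves each finite-dimensional graded piece $V^{(m)}$, on which it is represented by a fixed matrix, this is just the standard theory of matrix exponentials, and the passage between the general $V^{(m)}$ statement and the $V^{(1)}$ computation needs no comment because $x,y\in V^{(1)}$. So the proof is short and the bookkeeping is confined to the sign conventions in $a'=-\lambda b$ versus $b'=a$.
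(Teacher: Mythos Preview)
Your proof is correct. The paper takes the slightly more direct route you mention only at the end: it records that $Y_\lambda^{2k}x=(-\lambda)^k x$ and $Y_\lambda^{2k+1}x=(-\lambda)^k y$, plugs these into the exponential series, and recognizes the resulting even and odd sums as the power series of $\cos_\lambda$ and $\sin_\lambda$ (and similarly for $y$). Your primary argument instead characterizes $t\mapsto\exp(tY_\lambda)|_{V^{(1)}}$ as the flow of the linear ODE on $V^{(1)}$ and verifies that the candidate solves it, appealing to uniqueness. Both are standard and essentially interchangeable ways of computing a $2\times 2$ matrix exponential; the ODE framing avoids the series bookkeeping, while the paper's computation avoids the appeal to uniqueness.
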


\begin{proof}
Since clearly
\[
Y_\lambda^{2k}x = (-\lambda)^k x, \quad Y_\lambda^{2k+1}x = (-\lambda)^k y,
\]
we have
\[
\begin{aligned}\exp(tY_\lambda)x&=\sum_{k \geq 0}\dfrac{t^k}{k!}Y_{\lambda}^{k}x\\
&=\sum_{k \geq 0} \dfrac{t^{2k}}{(2k)!}(-\lambda)^k x+ \sum_{k \geq 0}\dfrac{t^{2k+1}}{(2k+1)!}(-\lambda)^k y  \\
&= x\cos_\lambda(t) + y \sin_\lambda(t).
\end{aligned}
\]
In the same way we can compute $\exp(tY_\lambda)y$.
\end{proof}

The following standard and elementary fact will be useful.

\begin{lema}
Let $\mathbf{A}$ be an algebra. A vector field on $\mathbf A$ is a derivation iff its flow $\phi_t$ satisfies
\[
\phi_t(pq) = \phi_t(p)\phi_t(q), \quad \forall p,q \in \mathbf{A},\forall t\in\R.
\]
In other words, each $\phi_t$ is an $\mathbf{A}$-morphism.
\end{lema}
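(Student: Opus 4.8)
The statement is the standard equivalence between a vector field being a derivation and its flow consisting of algebra morphisms. I would prove both implications by differentiating the functional equation $\phi_t(pq)=\phi_t(p)\phi_t(q)$ at $t=0$ and, conversely, using uniqueness of solutions to the ODE defining the flow.

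\textbf{Derivation $\Rightarrow$ morphism.} Let $D$ be the derivation generating the flow $\phi_t$, so that $\frac{d}{dt}\phi_t(p)=D(\phi_t(p))$ with $\phi_0=\operatorname{id}$. Fix $p,q\in\mathbf A$ and set $f(t):=\phi_t(pq)$ and $g(t):=\phi_t(p)\phi_t(q)$. Both are curves in $\mathbf A$ with $f(0)=g(0)=pq$. Differentiating, $f'(t)=D(\phi_t(pq))=D(f(t))$, while by the Leibniz rule for the derivative of a product in $\mathbf A$ together with the Leibniz rule for the derivation $D$,
\[
g'(t)=D(\phi_t(p))\,\phi_t(q)+\phi_t(p)\,D(\phi_t(q))=D\bigl(\phi_t(p)\phi_t(q)\bigr)=D(g(t)).
\]
Thus $f$ and $g$ solve the same linear ODE with the same initial condition, so $f\equiv g$, i.e. $\phi_t$ is an $\mathbf A$-morphism for every $t$.

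\textbf{Morphism $\Rightarrow$ derivation.} Conversely, suppose each $\phi_t$ is an $\mathbf A$-morphism, and let $D:=\frac{d}{dt}\big|_{t=0}\phi_t$ be the generating vector field. Differentiating the identity $\phi_t(pq)=\phi_t(p)\phi_t(q)$ at $t=0$ gives, again by the product rule in $\mathbf A$,
\[
D(pq)=D(p)\,q+p\,D(q),
\]
which is precisely the Leibniz identity, so $D$ is a derivation.

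\textbf{Main point.} There is no real obstacle here; the only thing to be careful about is that "vector field on $\mathbf A$" means a first-order operator whose flow is well-defined (e.g. $\mathbf A$ finite-dimensional, or we work with polynomial/formal flows as in our application to $\C[x,y]$ with $Y_\lambda$), so that the uniqueness-of-solutions argument in the first implication applies; in the polynomial setting this is automatic since $\exp(tY_\lambda)$ is given by the convergent (indeed, degree-preserving) series of Proposition above. We will only ever apply the lemma in that setting.
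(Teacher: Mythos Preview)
Your proof is correct and is the standard argument; the paper itself omits the proof entirely, simply labeling the lemma a ``standard and elementary fact.'' Your care about the existence/uniqueness hypothesis for the flow is appropriate and matches the only setting in which the paper applies the lemma (the polynomial algebra $\C[x,y]$ with the degree-preserving derivation $Y_\lambda$).
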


\begin{teo}
The solution of the Cauchy problem \eqref{cauchyproblem} is 
\begin{align}
p_k(t) &=  \binom{m}{k}u^k v^{m-k}\\
\label{eq:pk_factores}&=\binom{m}{k}(x\cos_\lambda(t) + y \sin_\lambda(t))^k (y \cos_\lambda(t) - \lambda x \sin_\lambda(t))^{m-k}\\
\label{eq:pk_desarrollo}&= {\sum_{j = 0}^{m}\phi_{m,k,j}^{\lambda}(t)\binom{m}{j}x^j y^{m-j}},
\end{align}
 where
\begin{equation}\label{eq:poliphi}
\phi_{m,k,j}^\lambda(t) = \sum_{h \geq 0}(-\lambda)^{j-h}\binom{m-j}{k-h}\binom{j}{h}\sin_\lambda^{k+j-2h}(t)\cos_\lambda^{m-k-j+2h}(t).
\end{equation}
\end{teo}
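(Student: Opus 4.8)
The plan is to solve the Cauchy problem \eqref{cauchyproblem} by exploiting the fact that $Y_\lambda$ is a derivation, and then to expand the resulting product to extract the coefficients $\phi^\lambda_{m,k,j}$. First I would observe that, by the elementary lemma above, the flow $\exp(tY_\lambda)$ is an algebra morphism of $\C[x,y]$, so it is determined by its action on the generators $x,y$. The preceding proposition computes $\exp(tY_\lambda)x = x\cos_\lambda(t)+y\sin_\lambda(t) = u$ and $\exp(tY_\lambda)y = y\cos_\lambda(t)-\lambda x\sin_\lambda(t) = v$. Hence
\[
p_k(t) = \binom{m}{k}\exp(tY_\lambda)(x^k y^{m-k}) = \binom{m}{k}\big(\exp(tY_\lambda)x\big)^k\big(\exp(tY_\lambda)y\big)^{m-k} = \binom{m}{k}u^k v^{m-k},
\]
which already gives \eqref{eq:pk_factores}. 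One should also check that $p_k(t)$ indeed lands in $V^{(m)}$ and satisfies the initial condition: at $t=0$ we have $u=x$, $v=y$, so $p_k(0)=\binom{m}{k}x^k y^{m-k}$; and since $u,v$ are each homogeneous linear in $x,y$, the product $u^k v^{m-k}$ is $m$-homogeneous. That $p_k$ solves the ODE is automatic from $\tfrac{d}{dt}\exp(tY_\lambda) = Y_\lambda\exp(tY_\lambda)$.

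The remaining and only computational step is to pass from the factored form \eqref{eq:pk_factores} to the expanded form \eqref{eq:pk_desarrollo} with the explicit coefficients \eqref{eq:poliphi}. For this I would write, using shorthand $s=\sin_\lambda(t)$, $c=\cos_\lambda(t)$,
\[
u^k = (xc+ys)^k = \sum_{a=0}^{k}\binom{k}{a}x^a c^a \, y^{k-a} s^{k-a},\qquad
v^{m-k} = (yc-\lambda xs)^{m-k} = \sum_{b=0}^{m-k}\binom{m-k}{b}(-\lambda)^b x^b s^b\, y^{m-k-b} c^{m-k-b}.
\]
Multiplying, the monomial $x^j y^{m-j}$ arises from pairs $(a,b)$ with $a+b=j$; substituting $a=j-b$ and renaming the summation index $b=h$ collects the coefficient of $x^j y^{m-j}$ as
\[
\sum_{h\geq 0}\binom{k}{j-h}\binom{m-k}{h}(-\lambda)^h\, s^{(k-j+h)+h}\, c^{(j-h)+(m-k-h)} = \sum_{h\geq 0}(-\lambda)^h\binom{k}{j-h}\binom{m-k}{h}s^{k-j+2h}c^{m-k-j+2(j-h)}.
\]
Finally I would divide through by $\binom{m}{j}$ (to match the normalization $\binom{m}{j}x^jy^{m-j}$ on the right of \eqref{eq:pk_desarrollo}) and perform the reindexing $h\mapsto j-h$ together with the binomial identity $\binom{k}{j-h}\binom{m-k}{h}\big/\binom{m}{j} = \binom{m-j}{k-h}\binom{j}{h}\big/\binom{m}{k}$ — more precisely, one checks $\binom{m}{k}\binom{k}{a}\binom{m-k}{b} = \binom{m}{j}\binom{j}{a}\binom{m-j}{b}$ for $a+b=j$ by expanding both sides into factorials — to bring the coefficient into exactly the shape \eqref{eq:poliphi}.

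I do not expect any genuine obstacle: the derivation/flow argument is immediate from the two preceding results, and the passage from \eqref{eq:pk_factores} to \eqref{eq:poliphi} is a bookkeeping exercise in the binomial theorem and one reindexing. The only point requiring a little care is the symmetry identity $\binom{m}{k}\binom{k}{j-h}\binom{m-k}{h} = \binom{m}{j}\binom{j}{h}\binom{m-j}{k-h}$ used to move the normalizing binomial coefficient from $\binom{m}{k}$ (implicit in $p_k$) to $\binom{m}{j}$ (attached to the target monomial), so that $\phi^\lambda_{m,k,j}(t)$ comes out with the stated binomial factors and with the exponents $k+j-2h$ of $\sin_\lambda$ and $m-k-j+2h$ of $\cos_\lambda$ after the substitution $h\mapsto j-h$; I would verify this identity by the factorial expansion indicated above.
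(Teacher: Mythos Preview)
Your approach is essentially identical to the paper's: use that $\exp(tY_\lambda)$ is an algebra morphism (since $Y_\lambda$ is a derivation) to get \eqref{eq:pk_factores}, then expand via the binomial theorem and reorganize with a binomial identity to reach \eqref{eq:pk_desarrollo}. The paper does exactly this, with the single change of variables $a=k-h,\ b=j-h$ and the identity $\binom{m}{k}\binom{k}{h}\binom{m-k}{j-h}=\binom{m}{j}\binom{m-j}{k-h}\binom{j}{h}$.

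One small slip to fix: the identity you wrote, $\binom{m}{k}\binom{k}{a}\binom{m-k}{b}=\binom{m}{j}\binom{j}{a}\binom{m-j}{b}$ for $a+b=j$, is false (try $m=5,k=2,j=3,a=2,b=1$). The correct version, which is what you actually need after your reindexing $h\mapsto j-h$, is
\[
\binom{m}{k}\binom{k}{h}\binom{m-k}{j-h}=\binom{m}{j}\binom{j}{h}\binom{m-j}{k-h},
\]
and this is immediate from the factorial expansion you propose. With that correction your argument goes through verbatim.
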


\begin{proof}
Since $Y_\lambda$ is a derivation, $\exp(tY_\lambda)$ is a $\C[x,y]$-morphism by the previous lemma. Hence
\[
\exp(tY_\lambda)x^{k}y^{m-k} = (\exp(tY_\lambda) x)^k (\exp(tY_\lambda) y)^{m-k} = u^k v^{m-k}.
\]
Comparing with \eqref{solucioncauchy} yields \eqref{eq:pk_factores}. 

It remains to prove \eqref{eq:pk_desarrollo}. Putting $s=\sin_\lambda(t),c=\cos_\lambda(t)$ we have

\begin{align*}
{m\choose k}(xc + y s)^k(yc - \lambda x s)^{m-k}&= {m\choose k} \sum_{a,b}{k\choose a}(ys)^a (xc)^{k-a}{m-k\choose b}(-\lambda xs)^b(yc)^{m-k-b}\\
 &= {m\choose k} \sum_{a,b}{k\choose a}{m-k\choose b}(-\lambda )^b s^{a+b} c^{m-a-b} x^{k-a+b} y^{m-k+a-b}\\
 &={m\choose k}\sum_{j,h} {k\choose h} {m-k\choose j-h} (-\lambda)^{j-h} s^{j+k-2h}c^{m-j-k+2h} x^j y^{m-j}
\end{align*} where we changed $a=k-h,b=j-h$. Using
\begin{equation}\label{eq:binomiales}
{m\choose k}{k\choose h}{m-k\choose j-h}={m-j\choose k-h}{j\choose h}{m\choose j}
\end{equation}
yields \eqref{eq:pk_desarrollo}.
\end{proof}

\subsection{Eigenvalues and eigenvectors of $Y_\lambda$}

Given $f\: V \to V$ an endomorphism of $\C$-vector spaces, we denote by $\nm{spec}(f)$ the set of eigenvalues of $f$ and by $E_\alpha(f)$ the eigenspace associated to each $\alpha \in \nm{spec}(f)$.

\begin{lema}\label{lemaauto}
The endomorphism $\restr{Y_\lambda}{V^{(m)}}$ is diagonalizable with simple multiplicities and
\[
\nm{spec}(\restr{Y_\lambda}{V^{(m)}}) = \suce{(2k-m)\sqrt{-\lambda} : 0 \leq k \leq m},
\]
\[
E_{(2k-m)\sqrt{-\lambda}}(\restr{Y_\lambda}{V^{(m)}}) = \nm{span} \{e_1^ke_2^{m-k}\},
\]
where $e_1 := \sqrt{-\lambda}x + y$ and $e_2:= -\sqrt{-\lambda}x + y$.
\end{lema}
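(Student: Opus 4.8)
The plan is to diagonalize $\restr{Y_\lambda}{V^{(m)}}$ by finding a change of variables that converts $Y_\lambda$ into a scaled multiple of the Euler-type operator $x\partial_x - y\partial_y$, which is already diagonal on monomials. Concretely, I would introduce the linear coordinates $e_1 = \sqrt{-\lambda}\,x + y$ and $e_2 = -\sqrt{-\lambda}\,x + y$ on the plane and compute how $Y_\lambda = Y - \lambda X = y\partial_x - \lambda x\partial_y$ acts on them. The expectation is that $Y_\lambda e_1 = \sqrt{-\lambda}\,e_1$ and $Y_\lambda e_2 = -\sqrt{-\lambda}\,e_2$, so that $e_1, e_2$ are themselves eigenvectors of $Y_\lambda$ acting on $V^{(1)}$.

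Granting that, the main step is to lift this to all of $V^{(m)}$. Since $Y_\lambda$ is a derivation on $\C[x,y]$ (as noted in the excerpt, right after \eqref{eq:Ylambda_monomio}), it satisfies the Leibniz rule, so
\begin{align*}
Y_\lambda(e_1^k e_2^{m-k}) &= k\,e_1^{k-1}(Y_\lambda e_1)e_2^{m-k} + (m-k)\,e_1^k e_2^{m-k-1}(Y_\lambda e_2)\\
&= \bigl(k\sqrt{-\lambda} - (m-k)\sqrt{-\lambda}\bigr)e_1^k e_2^{m-k} = (2k-m)\sqrt{-\lambda}\,e_1^k e_2^{m-k}.
\end{align*}
Thus each $e_1^k e_2^{m-k}$, for $0 \le k \le m$, is an eigenvector with eigenvalue $(2k-m)\sqrt{-\lambda}$. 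Because $e_1, e_2$ are linearly independent in $V^{(1)}$ (their determinant as a change of basis is $2\sqrt{-\lambda} \ne 0$ when $\lambda \ne 0$), the products $\{e_1^k e_2^{m-k}\}_{k=0}^m$ form a basis of $V^{(m)}$, so $\restr{Y_\lambda}{V^{(m)}}$ is diagonalizable. When $\lambda \ne 0$ the $m+1$ values $(2k-m)\sqrt{-\lambda}$ are pairwise distinct, giving simple multiplicities and the claimed description of $\nm{spec}$ and the eigenspaces $E_{(2k-m)\sqrt{-\lambda}}$.

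The only genuine subtlety is the degenerate case $\lambda = 0$, where $\sqrt{-\lambda} = 0$, the vectors $e_1$ and $e_2$ coincide, and $Y_0 = Y$ is nilpotent rather than diagonalizable; so the statement as written implicitly assumes $\lambda \ne 0$ (or more precisely, the interesting content is for $\lambda \ne 0$, the $\lambda = 0$ case being the classical Lefschetz $\mathfrak{sl}_2$ picture where $Y$ has the single eigenvalue $0$). I would either restrict to $\lambda \ne 0$ or remark that for $\lambda = 0$ the formula degenerates to the well-known nilpotency of $Y$. Apart from that, the proof is a short direct computation: verify $Y_\lambda e_i = \pm\sqrt{-\lambda}\,e_i$ by plugging into $y\partial_x - \lambda x\partial_y$, then invoke the derivation property and a dimension count. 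I do not expect any real obstacle here; the work is entirely in setting up the right eigenbasis, which Theorem on the exponential flow already hints at since $u, v$ are linear combinations of $x, y$ with coefficients $\cos_\lambda, \sin_\lambda$ whose behavior is governed by $\sqrt{-\lambda}$.
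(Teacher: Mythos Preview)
Your proposal is correct and follows essentially the same route as the paper: verify $Y_\lambda e_i = \pm\sqrt{-\lambda}\,e_i$ on $V^{(1)}$, then use that $Y_\lambda$ is a derivation to obtain $Y_\lambda(e_1^k e_2^{m-k}) = (2k-m)\sqrt{-\lambda}\,e_1^k e_2^{m-k}$. Your added remarks on linear independence of $e_1,e_2$ and on the degenerate case $\lambda=0$ are not made explicit in the paper's proof but are worth including.
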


\begin{proof} The result is trivial to check for $m=1$ as
\[
Y_\lambda(e_1)=\sqrt{-\lambda}y+\lambda x=\sqrt{-\lambda}e_1,\qquad Y_\lambda(e_2)=-\sqrt{-\lambda}y-\lambda x=-\sqrt{-\lambda}e_2.
\]

Since $Y_\lambda$ is a derivation
\[
\begin{aligned}
&Y_\lambda e_1^{k} = k e_{1}^{k-1}Y_\lambda e_1 = k \sqrt{-\lambda} e_1^{k},\\ 
&Y_\lambda e_2^{m-k} = (m-k) e_2^{m-k-1}Y_\lambda e_2 = -\sqrt{-\lambda}(m-k) e_2^{m-k}.
\end{aligned}
\]
Hence
\[
Y_\lambda(e_1^{k}e_2^{m-k}) = (2k-m) \sqrt{-\lambda} e_1^{k}e_2^{m-k},
\]
as stated.
\end{proof}

\begin{ob}
It is interesting to notice that the spectra of $Y_\lambda$ and $\sqrt{-\lambda} H$, when restricted to each $V^{(m)}$, are identical. These two operators are thus intertwined e.g. by the linear isomorphism $x^ky^{m-k}\mapsto e_1^{k}e_2^{m-k}$.
\end{ob}

\subsection{Image of $Y_\lambda$}

Using Lemma \ref{lemaauto}, we can conclude that  $\restr{Y_\lambda}{V^{(m)}}$ is bijective if and only if $m$ is odd. If $m$ is even, then the kernel is one-dimensional. An explicit description is the following.

\begin{prop}\label{prop:ecuacionkerYlambda}
    If $m$ is even, then
    \begin{equation}\label{eq:im_ker}
        \nm{im}(\restr{Y_\lambda}{V^{(m)}}) = \ker Z_{m,\lambda},
    \end{equation}
    where
    \[
    Z_{m,\lambda} := \left(\dfrac{\partial^{2}}{\partial x^{2}} + \lambda \dfrac{\partial^{2}}{\partial y^{2}}\right)^{m/2}.    
    \]
\end{prop}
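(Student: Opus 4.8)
The plan is to exploit the eigenbasis of $Y_\lambda$ from Lemma \ref{lemaauto}, because in that basis both the image of $Y_\lambda$ and the operator $Z_{m,\lambda}$ become transparent. Since $m$ is even, write $m=2\ell$. By Lemma \ref{lemaauto} the monomials $e_1^k e_2^{m-k}$, $0\le k\le m$, form an eigenbasis of $\restr{Y_\lambda}{V^{(m)}}$, with the single zero eigenvalue occurring at $k=\ell$. Hence $\nm{im}(\restr{Y_\lambda}{V^{(m)}})$ is exactly the span of $\{e_1^k e_2^{m-k} : k\ne \ell\}$, an $m$-dimensional subspace of the $(m+1)$-dimensional space $V^{(m)}$. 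So to prove \eqref{eq:im_ker} it suffices to show that $Z_{m,\lambda}$, as an operator $V^{(m)}\to V^{(0)}=\C$, is nonzero (so its kernel is exactly $m$-dimensional) and that it annihilates $e_1^k e_2^{m-k}$ for every $k\ne \ell$ while $Z_{m,\lambda}(e_1^\ell e_2^\ell)\ne 0$.

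The key computation is the action of the first-order operators on the eigenvectors $e_1,e_2$. Writing $e_1=\sqrt{-\lambda}\,x+y$, $e_2=-\sqrt{-\lambda}\,x+y$, one finds directly that $\partial_x e_1=\sqrt{-\lambda}$, $\partial_y e_1=1$, and similarly for $e_2$; a short calculation then gives that the second-order operator $\partial_x^2+\lambda\partial_y^2$ acts on a product $e_1^a e_2^b$ by lowering the total degree by two, and more precisely that $(\partial_x^2+\lambda\partial_y^2)$ applied to $e_1$ or $e_2$ individually vanishes, while the mixed term is what survives. Concretely, since $\partial_x^2+\lambda\partial_y^2 = (\partial_x+\sqrt{-\lambda}\,\partial_y)(\partial_x-\sqrt{-\lambda}\,\partial_y)$ and $e_1,e_2$ are, up to constants, the linear forms dual to these two first-order operators, one checks that $(\partial_x-\sqrt{-\lambda}\,\partial_y)$ annihilates $e_2$ and acts on $e_1$ as multiplication by the constant $2\sqrt{-\lambda}$ (and symmetrically for the other factor). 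It follows by induction that $Z_{m,\lambda}=(\partial_x^2+\lambda\partial_y^2)^{\ell}$ sends $e_1^a e_2^b$ (with $a+b=m=2\ell$) to zero unless $a=b=\ell$, in which case it equals a nonzero constant times $e_1^0 e_2^0=1$. This is precisely the claim.

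Therefore $\ker Z_{m,\lambda}$ is spanned by $\{e_1^k e_2^{m-k}: k\ne\ell\}$, which coincides with $\nm{im}(\restr{Y_\lambda}{V^{(m)}})$ by the first paragraph, establishing \eqref{eq:im_ker}. One small point to verify carefully is the case $\lambda=0$: there $e_1=e_2=y$, the eigenbasis degenerates, and Lemma \ref{lemaauto} must be read appropriately; but for $\lambda=0$ the operator $Y_0=Y=y\partial_x$ is nilpotent on $V^{(m)}$ with image the span of $\{x^j y^{m-j}: j\le m-1\}$, i.e.\ the polynomials divisible by $y$, while $Z_{m,0}=\partial_x^{m}$ kills exactly those and is nonzero on $x^m$, so \eqref{eq:im_ker} holds by direct inspection. (Alternatively one notes both sides depend polynomially on $\sqrt{-\lambda}$ and concludes by continuity, but the direct check is cleaner.)

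\textbf{Main obstacle.} The only real work is the algebraic identity that $(\partial_x^2+\lambda\partial_y^2)^{\ell}$ annihilates all of $e_1^k e_2^{m-k}$ with $k\ne\ell$ and is nonzero on $e_1^\ell e_2^\ell$; everything else is bookkeeping about dimensions and the eigenspace decomposition. I would handle that identity cleanly by factoring $Z_{m,\lambda}$ into the two commuting first-order operators $D_\pm=\partial_x\pm\sqrt{-\lambda}\,\partial_y$ and observing that $D_+$ raises the "$e_1$-weight" and kills the $e_2$-factor while $D_-$ does the reverse, so that one needs exactly $\ell$ applications of each to land in $V^{(0)}$ without getting zero, forcing the symmetric exponent.
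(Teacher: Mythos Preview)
Your argument is correct and takes a genuinely different route from the paper. The paper works in the standard monomial basis: it computes $Z_{m,\lambda}(x^k y^{m-k})$ explicitly via the binomial expansion, then verifies by direct cancellation that $Z_{m,\lambda}\circ Y_\lambda(x^k y^{m-k})=0$ for every $k$, and concludes by the dimension count (both sides of \eqref{eq:im_ker} have dimension $m$). Your approach instead passes to the eigenbasis $\{e_1^k e_2^{m-k}\}$ of Lemma~\ref{lemaauto} and factors $Z_{m,\lambda}=D_+^\ell D_-^\ell$ with $D_\pm=\partial_x\pm\sqrt{-\lambda}\,\partial_y$; since each $D_\pm$ is a derivation annihilating one of $e_1,e_2$ and scaling the other, the action of $Z_{m,\lambda}$ on $e_1^a e_2^b$ is transparent, and so is the image of the diagonalized $Y_\lambda$. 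This is conceptually cleaner and explains \emph{why} $Z_{m,\lambda}$ is the right functional (it is, up to a constant, the unique weight-zero vector in the dual picture), at the cost of having to treat $\lambda=0$ separately, which the paper's direct computation avoids.

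One small slip: you have the roles of $D_+$ and $D_-$ interchanged. With $e_1=\sqrt{-\lambda}\,x+y$ and $e_2=-\sqrt{-\lambda}\,x+y$ one has $D_-e_1=0$, $D_-e_2=-2\sqrt{-\lambda}$, and $D_+e_2=0$, $D_+e_1=2\sqrt{-\lambda}$; so it is $D_-$ that annihilates $e_1$, not $e_2$. This does not affect the argument, since $D_+$ and $D_-$ enter $Z_{m,\lambda}$ symmetrically.
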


\begin{proof}
   By the binomial formula
    \begin{align}\notag
    Z_{m,\lambda}(x^k y^{m-k})& = \sum_{i = 0}^{m/2} \lambda^{m/2- i}\binom{m/2}{i}\dfrac{\partial^{m}}{\partial x^{2i}\partial y^{m-2i}}x^k y^{m-k}\delta_{k,2i}\\ &=\lambda^{\frac{m-k}2} \binom{m/2}{k/2}k!(m-k)!\label{eq:Z_monomio}
    \end{align}
    if $k$ is even, and $Z_{m,\lambda}(x^k y^{m-k}) = 0$ if $k$ is odd. Therefore
    \begin{align*}
            Z_{m,\lambda}\circ Y_\lambda (x^{2l+1}y^{m-2l-1}) =&\, Z_{m,\lambda}((2l+1) x^{2l}y^{m-2l} - \lambda (m-2l-1)x^{2l+2}y^{m-2l-2}) \\
        =&\, \lambda^{\frac{m}{2}-l} \binom{m/2}{l} (2 l+1)! (m-2l)!\\
        & -\lambda^{\frac{m}{2}-l}  \binom{m/2}{l+1} (2l+2)! (m-2l-1)! = 0\\
        Z_{m,\lambda}\circ Y_\lambda(x^{2l}y^{m-2l}) =&\, 0.
    \end{align*}
   This shows that $\text{im}(Y_\lambda)$ is a subspace of $\ker Z_{m,\lambda}$. Given that $Z_{m,\lambda}$ is not zero, we have $\text{dim} \ker Z_{m,\lambda} = m$, and by Lemma \ref{lemaauto}, we know that the image of $\restr{Y_\lambda}{V^{(m)}}$ has the same dimension. This yields \eqref{eq:im_ker}
\end{proof}

Next we compute, for even $m$ and given $\varphi$ in the image of $\restr{Y_\lambda}{V^{(m)}}$, the preimage $Y_\lambda^{-1}(\{\varphi\})$.

Consider
\begin{equation}\label{def:Pk}
    P_{m,k} := \sum_{j \geq 0} \lambda^j \dfrac{(k+2j-1)!! (m-k-2j-1)!!}{(k-1)!!(m-k+1)!!}\binom{m}{k+2j}x^{k+2j}y^{m-k-2j} \in V^{(m)}.
\end{equation}

A simple computation using \eqref{eq:Ylambda_monomio} shows
 \[
Y_\lambda P_{m,k}=\binom{m}{k-1}x^{k-1}y^{m-k+1}-c_{m,k} x^m
\]
where $c_{m,k}=0$ if $m-k$ is even, and otherwise
\[
c_{m,k}=\lambda^{\frac{m-k+1}2}\frac{m!!}{(k-1)!!(m-k+1)!!}.
\]

With these ingredients at hand, for even $m$, we can now compute a preimage by $Y_\lambda$ of any element in $\nm{im} Y_\lambda$ as follows.
\begin{prop}\label{prop:formulapreimagenYlambda}
    Let $\Pi \: V^{(m)} \to V^{(m)}$ be given by $\binom{m}{k}x^k y^{m-k} \mapsto P_{m,k+1}$. If $m$ is even then 
    \begin{equation}\label{eq:formulapreimagen}
        {Y_\lambda \circ \Pi }(\varphi) =\varphi,\qquad \forall \varphi\in \nm{im}\restr{Y_\lambda}{V^{(m)}}
    \end{equation}
\end{prop}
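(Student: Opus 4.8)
The plan is to verify the identity \eqref{eq:formulapreimagen} by checking it on the basis $\{\binom{m}{k}x^ky^{m-k}\}$ and using the formula for $Y_\lambda P_{m,k}$ recorded just before the statement. First I would fix $\varphi\in\nm{im}\,\restr{Y_\lambda}{V^{(m)}}$. Since $m$ is even, by Proposition \ref{prop:ecuacionkerYlambda} we have $\nm{im}\,\restr{Y_\lambda}{V^{(m)}}=\ker Z_{m,\lambda}$, and from \eqref{eq:Z_monomio} the functional $Z_{m,\lambda}$ pairs (up to a nonzero scalar) with the coefficient of $x^m$ after the substitution $y\mapsto$ (suitable power), so the condition $\varphi\in\ker Z_{m,\lambda}$ will be exactly what makes the ``error terms'' $c_{m,k}x^m$ cancel out in the computation below.

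The key computation: write $\varphi=\sum_{k}a_k\binom{m}{k}x^ky^{m-k}$. Then by definition of $\Pi$,
\[
Y_\lambda\circ\Pi(\varphi)=\sum_k a_k\, Y_\lambda P_{m,k+1}=\sum_k a_k\left(\binom{m}{k}x^ky^{m-k}-c_{m,k+1}x^m\right)=\varphi-\Bigl(\sum_k a_k c_{m,k+1}\Bigr)x^m.
\]
So it remains to show $\sum_k a_k c_{m,k+1}=0$. Only the terms with $m-k-1$ even, i.e.\ $k$ odd, contribute (since $c_{m,k+1}=0$ otherwise), and for those $c_{m,k+1}=\lambda^{(m-k)/2}\frac{m!!}{k!!(m-k)!!}$. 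On the other hand, applying $Z_{m,\lambda}$ to $\varphi$ and using \eqref{eq:Z_monomio}: $Z_{m,\lambda}\varphi=\sum_{k\text{ even}}a_k\binom{m}{k}\lambda^{(m-k)/2}\binom{m/2}{k/2}k!(m-k)!$. These two expressions involve disjoint sets of coefficients ($k$ odd versus $k$ even), so a direct comparison does not immediately work; the honest route is to note that the error term $x^m$ itself is \emph{not} in $\ker Z_{m,\lambda}$ (indeed $Z_{m,\lambda}x^m=m!\neq 0$ by \eqref{eq:Z_monomio} with $k=m$, as $m$ is even), whereas both $\varphi$ and $Y_\lambda\circ\Pi(\varphi)$ lie in $\nm{im}\,\restr{Y_\lambda}{V^{(m)}}=\ker Z_{m,\lambda}$ — the latter because $\Pi(\varphi)\in V^{(m)}$ and the image is all of $\ker Z_{m,\lambda}$. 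Subtracting, $(\sum_k a_k c_{m,k+1})x^m\in\ker Z_{m,\lambda}$, forcing the scalar to vanish. Hence $Y_\lambda\circ\Pi(\varphi)=\varphi$.

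\textbf{Main obstacle.} The only delicate point is making sure the ``error term'' argument is airtight: one needs that $x^m\notin\ker Z_{m,\lambda}$ (clear from \eqref{eq:Z_monomio}) \emph{and} that both sides of the candidate identity genuinely lie in $\ker Z_{m,\lambda}$. The membership $Y_\lambda\circ\Pi(\varphi)\in\ker Z_{m,\lambda}=\nm{im}\,\restr{Y_\lambda}{V^{(m)}}$ is immediate since it is manifestly in the image of $Y_\lambda$; and $\varphi\in\ker Z_{m,\lambda}$ is the hypothesis. Everything else is bookkeeping with double factorials via \eqref{eq:Ylambda_monomio}, already carried out in the paragraph preceding the proposition, so no further serious calculation is required. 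I would present the proof essentially in the three steps above: recall $Y_\lambda P_{m,k}$, expand $Y_\lambda\circ\Pi(\varphi)$ to isolate the multiple of $x^m$, then kill that multiple using $x^m\notin\ker Z_{m,\lambda}$ together with Proposition \ref{prop:ecuacionkerYlambda}.
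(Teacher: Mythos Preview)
Your argument is correct, and it is a genuinely different route from the paper's. The paper proves the identity by showing $Y_\lambda\circ\Pi\circ Y_\lambda = Y_\lambda$ on each basis monomial $\binom{m}{k}x^ky^{m-k}$: for $0<k<m$ it applies $\Pi$ to $Y_\lambda$ of that monomial, then $Y_\lambda$ again, and uses the double-factorial identity $(m-k+1)c_{m,k}-\lambda(k+1)c_{m,k+2}=0$ to see that the two error terms $c_{m,k}x^m$ and $c_{m,k+2}x^m$ cancel; the endpoints $k=0,m$ are handled separately using $c_{m,2}=c_{m,m}=0$ for even $m$. Your approach instead expands $Y_\lambda\circ\Pi(\varphi)=\varphi-\bigl(\sum_k a_kc_{m,k+1}\bigr)x^m$ in one stroke and then kills the scalar by the soft observation that both sides lie in $\ker Z_{m,\lambda}=\nm{im}\,Y_\lambda$ while $x^m$ does not. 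This is more conceptual: it replaces a combinatorial identity and a case split by a single use of Proposition~\ref{prop:ecuacionkerYlambda}. The paper's argument, on the other hand, never invokes Proposition~\ref{prop:ecuacionkerYlambda} and is entirely self-contained at the level of the $c_{m,k}$'s. One small point worth making explicit in your write-up: the formula $Y_\lambda P_{m,k+1}=\binom{m}{k}x^ky^{m-k}-c_{m,k+1}x^m$ is being used for all $0\le k\le m$, including $k=m$; there $P_{m,m+1}=0$ and $c_{m,m+1}=1$, so both sides vanish and the identity still holds.
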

\begin{proof}

Let $0< k < m$. Since $(m-k+1)c_{m,k} - \lambda (k+1)c_{m,k+2} = 0$, using \eqref{eq:Ylambda_monomio} we get
\begin{align*}
    Y_\lambda \circ \Pi \circ Y_\lambda \binom{m}{k}x^k y^{m-k} &= Y_\lambda \circ \Pi \left((m-k+1)\binom{m}{k-1}x^{k-1}y^{m-k+1} - \lambda (k+1)\binom{m}{k+1}x^{k+1}y^{m-k-1}\right) \\
    &=(m-k+1)Y_\lambda P_{m,k} - \lambda(k+1)Y_\lambda P_{m,k+2} \\
    &= (m-k+1)\binom{m}{k-1}x^{k-1}y^{m-k+1} - \lambda (k+1)\binom{m}{k+1}x^{k+1}y^{m-k-1}\\
    &+((m-k+1)c_{m,k}-\lambda (k+1)c_{m,k+2})x^m \\
    &= Y_\lambda\binom{m}{k}x^k y^{m-k}.
\end{align*}
For $k = 0$ and $k = m$,
\begin{align*}
Y_\lambda \circ \Pi \circ Y_\lambda (y^m) &=  -\lambda Y_\lambda(P_{m,2})=-\lambda(mx y^{m-1} -  c_{m,2}x^m)=Y_\lambda(y^m) +\lambda c_{m,2}x^m, \\
Y_\lambda \circ \Pi \circ Y_\lambda (x^m) &= Y_\lambda(P_{m,m})=m x^{m-1}y - c_{m,m}x^m =Y_\lambda(x^m) + c_{m,m}x^m. 
\end{align*}
Since $c_{m,m} = 0$, and $c_{m,2} = 0$ if $m$ is even, the result follows.
\end{proof}

\section{Tube formulas in $\RR{m}$ and $\CC{n}$}
Here we will obtain our main result: the tube formulas for invariant valuations of $\CC{n}$ (i.e. the tubular operator $\mathbf{T}_t$ on $\vlc$). We will also recover Santaló's tube formulas for $\vlr$ (cf. \cite{santalopol}) in a way that explains the similarities between the real and the complex space forms.

\subsection{Tube formulas in complex space forms}

Recalling \eqref{eq:sl2_decomposition} and Proposition \ref{prop:primitives}, we get an isomorphism $I\colon W_n\to \Val^{U(n)}$ of $\mathfrak{sl}_2$-modules from 
\[
W_{n}:=\bigoplus_{0 \leq 2r \leq n} V^{(2n-4r)}
\] 
to $\Val^{U(n)}$ by putting $I(y^{2n-4r})=\pi_{2r,r}$ (i.e. mapping $Y$-primitive elements to $\Lambda$-primitive elements) and
\[
\begin{aligned}
{2n-4r\choose k-2r}I(x^{k-2r} y^{2n-k-2r})&=\frac1{(k-2r)! }I(X^{k-2r}(y^{2n-4r}))\\
&= \frac1{(k-2r)! }L^{k-2r}I(y^{2n-4r})=\frac1{(k-2r)! }\pi_{k,r}.
\end{aligned}
\] 

By Theorem \ref{teo:formulasor}, the map $J_{\lambda,\C}:=\Phi_\lambda\circ I\colon W_{n}\to \mathcal V_{\lambda,\C}^n$ fulfills
\begin{equation}\label{eq:conmutan}
 \derivada{\lambda,\C} \circ J_{\lambda,\C} =J_{\lambda,\C} \circ Y_\lambda.
\end{equation}

We define 
\begin{equation}\label{eq:definicionsigma}\sigma_{k,r}^\lambda := {2n-4r\choose k-2r}J_{\lambda}(x^{k-2r} y^{2n-k-2r})= \dfrac{\omega_{2n-k}}{(k-2r)!}\pi_{k,r}^\lambda
\end{equation}
and arrive at our main theorem.\begin{teo}\label{teo:main}
The tubular operator $\mathbf{T}_t$ in $\vlc$ is given by
\[
\mathbf{T}_t(\sigma_{k,r}^{\lambda}) =  \sum_{j = 0}^{2n-4r}\phi_{2n-4r,k-2r,j}^\lambda(t)\sigma_{j+2r,r}^{\lambda},
\]
where 
\[
\phi_{m,i,j}^\lambda(t) = \sum_{h \geq 0}(-\lambda)^{j-h}\binom{m-j}{i-h}\binom{j}{h}\sin_\lambda^{i+j-2h}(t)\cos_\lambda^{m-i-j+2h}(t).
\]
\end{teo}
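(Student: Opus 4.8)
The plan is to reduce the statement entirely to the abstract Cauchy problem already solved in Section 5. The key observation is that everything has been arranged so that the derivative operator $\partial_{\lambda,\C}$ on $\vlc$ is, under the isomorphism $J_{\lambda,\C}\colon W_n\to\vlc$, conjugate to the derivation $Y_\lambda$ on the polynomial model $W_n=\bigoplus_{0\leq 2r\leq n}V^{(2n-4r)}$. This is exactly the content of \eqref{eq:conmutan}, which in turn rests on Theorem \ref{teo:formulasor} (the intertwining $\partial_{\lambda,\C}=\Phi_\lambda\circ(\Lambda-\lambda L)\circ\Phi_\lambda^{-1}$) together with the $\mathfrak{sl}_2$-module isomorphism $I$ built from Proposition \ref{prop:primitives}.

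First I would recall from Section 4 that, since $\vlc$ is finite dimensional and $\mathbf T_t$ preserves $G_{\lambda,\C}$-invariance, equation \eqref{eq:exponential_T} gives $\mathbf T_t\mu=\exp(t\partial_{\lambda,\C})\mu$ for every $\mu\in\vlc$. Next, using \eqref{eq:conmutan}, conjugation gives $\exp(t\,\partial_{\lambda,\C})\circ J_{\lambda,\C}=J_{\lambda,\C}\circ\exp(tY_\lambda)$, so that
\[
\mathbf T_t\bigl(\sigma_{k,r}^\lambda\bigr)=\mathbf T_t\Bigl(\tbinom{2n-4r}{k-2r}J_{\lambda,\C}(x^{k-2r}y^{2n-k-2r})\Bigr)=\tbinom{2n-4r}{k-2r}J_{\lambda,\C}\Bigl(\exp(tY_\lambda)(x^{k-2r}y^{2n-k-2r})\Bigr).
\]
Here I use the definition \eqref{eq:definicionsigma} of $\sigma_{k,r}^\lambda$. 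Now $\tbinom{2n-4r}{k-2r}\exp(tY_\lambda)(x^{k-2r}y^{2n-k-2r})$ is precisely $p_{k-2r}(t)$ in the Cauchy problem \eqref{cauchyproblem} with $m=2n-4r$, and the Theorem of Subsection \ref{subsec:ode} gives its expansion \eqref{eq:pk_desarrollo}:
\[
\tbinom{2n-4r}{k-2r}\exp(tY_\lambda)(x^{k-2r}y^{2n-k-2r})=\sum_{j=0}^{2n-4r}\phi_{2n-4r,\,k-2r,\,j}^\lambda(t)\,\tbinom{2n-4r}{j}x^{j}y^{2n-4r-j}.
\]
Applying $J_{\lambda,\C}$ and using \eqref{eq:definicionsigma} once more to recognize $\tbinom{2n-4r}{j}J_{\lambda,\C}(x^j y^{2n-4r-j})=\sigma_{j+2r,r}^\lambda$ yields the claimed formula.

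There is essentially no obstacle here beyond careful bookkeeping: the work has all been done in the preparatory sections. The one point that deserves a line of justification is that $\mathbf T_t$ really does act on the finite-dimensional space $\vlc$ as $\exp(t\partial_{\lambda,\C})$ — i.e. that the Cauchy problem \eqref{eq:Cauchy_Tmu} with initial condition $\mathbf T_0\mu=\mu$ has $\exp(t\partial_{\lambda,\C})\mu$ as its unique solution — which follows from Proposition \ref{prop:derivada}(ii) exactly as explained after \eqref{eq:Cauchy_Tmu}; unlike the flat case, $\partial_{\lambda,\C}$ need not be nilpotent, but this is irrelevant since the exponential of a linear endomorphism of a finite-dimensional space is always defined. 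I should also note that the index range $j=0,\dots,2n-4r$ in the statement matches the dimension $2n-4r+1$ of $V^{(2n-4r)}$, so every term $\sigma_{j+2r,r}^\lambda$ appearing is a genuine basis element.
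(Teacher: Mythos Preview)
Your proposal is correct and follows essentially the same route as the paper's own proof: both use \eqref{eq:exponential_T} to write $\mathbf{T}_t=\exp(t\partial_{\lambda,\C})$, conjugate via \eqref{eq:conmutan} to reduce to $\exp(tY_\lambda)$ on $W_n$, and then invoke the explicit solution \eqref{eq:pk_desarrollo} of the model Cauchy problem together with \eqref{eq:definicionsigma}. Your additional remarks (on finite-dimensionality, non-nilpotency, and the index range) are sound but not strictly needed beyond what the paper already records.
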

\begin{proof}
By \eqref{eq:exponential_T}, using \eqref{eq:conmutan} and \eqref{eq:definicionsigma}, and putting $m=2n-4r$,  we get
\begin{align*}
    \mathbf T_t \sigma_{k,q}^\lambda&=\mathrm{exp}(t \derivada{\lambda,\C})(\sigma_{k,q}^\lambda)\\
    &={m\choose k-2r}\mathrm{exp}(t \derivada{\lambda,\C})\circ  J_{\lambda,\C}(x^{k-2r} y^{m-k+2r})\\
    &={m\choose k-2r}J_{\lambda,\C}\circ  \mathrm{exp} (t Y_\lambda)(x^{k-2r} y^{m-k+2r})\\
    &= J_{\lambda,\C} (p_{k-2r}(t)).
\end{align*}
Using \eqref{eq:pk_desarrollo} the result follows.
\end{proof}

The tube formulas in terms of the $\tau_{k,i}^\lambda$ can be obtained from Theorem \ref{teo:main} using \eqref{eq:pikrtasakis} and \eqref{eq:descLef} which hold verbatim replacing $\pi^\lambda_{k,r},\tau_{k,r}^\lambda$ for $\pi_{k,r},\tau_{k,r}$.

\begin{ob}
    The tube formula for the volume $\sigma_{2n,0}^{\lambda} = \nm{vol}_{\CC{n}}$ is given by the following simple expression
\[
\nm{vol}_{\CC{n}}(A_t)= \sum_{j = 0}^{2n}\sin_{\lambda}^{2n-j}(t)\cos_\lambda^{j}(t)\sigma_{j,0}^{\lambda}(A),
\]
which is Theorem 4.3 of \cite{bfs}, since $\sigma_{j,0}^\lambda = \omega_{2n-j}\tau_{j,0}^{\lambda} = \Phi_\lambda(\mu_j)$. The tube formulas $\mathbf{T}_t \sigma_{2n-2r,r}$ are equally simple.
\end{ob}

\begin{ob} 

An interesting feature of the previous tube formulas is the following self-similarity property, which is explained by \eqref{eq:conmutan}. Let 
\[
\mathbf{G}^{n,j}_{\lambda}\:\vlc\hasta\V^{n+2j}_{\lambda,\C}  ,\qquad \mathbf G^{n,j}(\sigma_{k,r}^{\lambda})=\sigma_{k+2j,r+j}^\lambda. 
\] 
Then one has $\mathbf T_t\circ \mathbf G^{n,j}=\mathbf G^{n,j}\circ \mathbf T_t$.
\end{ob}

\begin{ob}
    It is also worth noting that $\vlc=\bigoplus_{0 \leq 2r \leq n}\Inv{n}{r}$ where 
    \[
    \Inv{n}{r} := J_{\lambda,\C}(V^{(2n-4r)}) = \suce{\sigma_{k,r}^\lambda : 2r \leq k \leq 2n-2r},
    \]
    and that these subspaces are $\partial_{\lambda,\C}$-invariant. In particular, given $\varphi\in \Inv{n}{r}$ one has $\mathbf T_t(\varphi)\in\Inv{n}{r}$.
    
\end{ob}

\subsection{Tube formulas in real space forms}\label{tubularreal}

Let $I\: V^{(m)}\to \Val^{O(m)}$ be the isomorphism of irreducible $\mathfrak{sl}_2$-representations determined by $I(y^{m})=\chi$; i.e.
\begin{align*}
{m \choose i  }I(x^i y^{m-i})&= \frac1{i!} I(X^i(y^m))=\frac1{i!}L^i(I(y^m))\\
&=\frac1{i!}L^i(\mu_0)=\mu_i
\end{align*}where we used \eqref{eq:Lmu}. By Theorem \ref{teo:formulasor_real}, the map $J_{\lambda,\R}=\Psi_\lambda\circ I$ satisfies
\begin{equation}\label{eq:conmutan_real}
\derivada{\lambda,\R} \circ J_{\lambda,\R} = J_{\lambda,\R} \circ Y_\lambda.
\end{equation}

The map $J_{\lambda,\R}$ is explicitly given by
\begin{equation}\label{eq:J_real}
 J_{\lambda,\R} \: V^{(m)} \hasta \mathcal V_{\lambda,\R}^{m+1}, \qquad \binom{m}{i}x^{i}y^{m-i} \longmapsto \sigma_i^{\lambda}.
\end{equation}
The image of $J_{\lambda,\R}$ is the hyperplane $\mathcal H_\lambda^{m+1}:=\nm{im} J_{\lambda,\R}= \nm{span}\suce{\sigma_{0}^\lambda,\dots,\sigma_{m}^\lambda}$.

\begin{teo} The tubular operator on $\mathcal V_{\lambda,\R}^{m+1}$ is given as follows. For $ i = 0,\dots,m$,
\begin{equation}\label{eq:tubo_sigma_real}
\mathbf T_t\sigma_i^{\lambda} = \sum_{j = 0}^{m}\phi^\lambda_{m,i,j}(t)\sigma^\lambda_{j}.
\end{equation}
In particular
\begin{equation}\label{eq:tubo_area_real}
\mathbf T_t\sigma_{m}^\lambda  = \sum_{j = 0}^{m}\sin_\lambda^{m-j}(t)\cos_\lambda^{j}(t)\sigma^\lambda_{j},
\end{equation}
and thus
\begin{equation}\label{eq:tubo_vol_real}
\mathbf T_t\sigma_{m+1}^\lambda = \sum_{j = 0}^{m}\left(\int_{0}^{t}\sin_\lambda^{m-j}(s)\cos_\lambda^{j}(s)ds\right)\sigma^\lambda_{j} + \sigma_{m+1}^\lambda.
\end{equation}
\end{teo}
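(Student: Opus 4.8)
The plan is to derive the formula for $\mathbf{T}_t\sigma_i^\lambda$ with $0\le i\le m$ directly from the machinery already assembled, and then obtain the tube formula for $\sigma_{m+1}^\lambda=\mathrm{vol}$ by a separate argument exploiting equation \eqref{eq:der_vol}. For the first part, I would simply unwind the definitions: by \eqref{eq:exponential_T} we have $\mathbf{T}_t\sigma_i^\lambda=\exp(t\,\partial_{\lambda,\R})(\sigma_i^\lambda)$, and since $\sigma_i^\lambda=J_{\lambda,\R}\bigl(\binom{m}{i}x^i y^{m-i}\bigr)$ by \eqref{eq:J_real}, the intertwining relation \eqref{eq:conmutan_real} gives $\exp(t\,\partial_{\lambda,\R})\circ J_{\lambda,\R}=J_{\lambda,\R}\circ\exp(tY_\lambda)$. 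Hence $\mathbf{T}_t\sigma_i^\lambda=J_{\lambda,\R}(p_i(t))$, where $p_i(t)$ is the solution of the Cauchy problem \eqref{cauchyproblem}. Plugging in the expansion \eqref{eq:pk_desarrollo} and applying $J_{\lambda,\R}$ term by term (using again \eqref{eq:J_real} to send $\binom{m}{j}x^j y^{m-j}\mapsto\sigma_j^\lambda$) yields \eqref{eq:tubo_sigma_real}. The special case \eqref{eq:tubo_area_real} follows by setting $i=m$ in \eqref{eq:poliphi}: the binomial $\binom{m-j}{m-h}$ forces $h=j$, the factor $\binom{j}{j}=1$, and $(-\lambda)^{j-j}=1$, so only the single term $\sin_\lambda^{m-j}(t)\cos_\lambda^{j}(t)$ survives.

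For the volume $\sigma_{m+1}^\lambda$, the intertwining argument is unavailable because $\sigma_{m+1}^\lambda$ lies outside the hyperplane $\mathcal H_\lambda^{m+1}=\mathrm{im}\,J_{\lambda,\R}$ (this is exactly the asymmetry flagged after Theorem \ref{teo:formulasor_real}, stemming from \eqref{eq:der_vol}). Instead I would use the Cauchy problem \eqref{eq:Cauchy_Tmu} directly: $\frac{d}{dt}\mathbf{T}_t\sigma_{m+1}^\lambda=\partial_{\lambda,\R}\mathbf{T}_t\sigma_{m+1}^\lambda$. Write $\mathbf{T}_t\sigma_{m+1}^\lambda=\sum_{j=0}^{m+1}a_j(t)\sigma_j^\lambda$ with $a_j(0)=\delta_{j,m+1}$. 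Since $\partial_{\lambda,\R}\sigma_{m+1}^\lambda=\sigma_m^\lambda$ by \eqref{eq:der_vol}, which has no $\sigma_{m+1}^\lambda$-component, and since $\partial_{\lambda,\R}$ preserves $\mathcal H_\lambda^{m+1}$, the coefficient $a_{m+1}(t)$ satisfies $a_{m+1}'(t)=0$, hence $a_{m+1}(t)\equiv 1$. Writing $\mathbf{T}_t\sigma_{m+1}^\lambda=\sigma_{m+1}^\lambda+w(t)$ with $w(t)\in\mathcal H_\lambda^{m+1}$, the ODE becomes $w'(t)=\partial_{\lambda,\R}w(t)+\sigma_m^\lambda$ with $w(0)=0$. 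Alternatively, and more cleanly, I would differentiate the already-known formula \eqref{eq:tubo_area_real}: since $\mathbf{T}_{t}$ is a semigroup, $\frac{d}{dt}\mathbf{T}_t\sigma_{m+1}^\lambda=\partial_{\lambda,\R}\mathbf{T}_t\sigma_{m+1}^\lambda$, and because $\partial_{\lambda,\R}\sigma_{m+1}^\lambda=\sigma_m^\lambda$ together with $\mathbf{T}_t\partial_{\lambda,\R}=\partial_{\lambda,\R}\mathbf{T}_t$ gives $\frac{d}{dt}\mathbf{T}_t\sigma_{m+1}^\lambda=\mathbf{T}_t\sigma_m^\lambda=\sum_{j=0}^m\sin_\lambda^{m-j}(s)\cos_\lambda^j(s)\,\sigma_j^\lambda$ evaluated at $s=t$. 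Integrating from $0$ to $t$ and using $\mathbf{T}_0\sigma_{m+1}^\lambda=\sigma_{m+1}^\lambda$ gives precisely \eqref{eq:tubo_vol_real}.

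The only genuinely delicate point is confirming that the $\sigma_{m+1}^\lambda$-coefficient of $\mathbf{T}_t\sigma_{m+1}^\lambda$ stays equal to $1$; equivalently, that $\mathbf{T}_t\sigma_m^\lambda$ has no $\sigma_{m+1}^\lambda$-component, which is already guaranteed by \eqref{eq:tubo_area_real}. Everything else is bookkeeping: substituting the explicit $\phi_{m,i,j}^\lambda$ from \eqref{eq:poliphi}, checking the degenerate binomials in the cases $i=m$, and carrying the constant of integration. I would present the semigroup/integration argument for the volume rather than solving the inhomogeneous ODE, as it is shorter and makes the appearance of $\int_0^t\sin_\lambda^{m-j}(s)\cos_\lambda^j(s)\,ds$ transparent.
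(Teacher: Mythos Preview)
Your proposal is correct and follows essentially the same route as the paper: for $0\le i\le m$ you pass through $J_{\lambda,\R}$ and the solved Cauchy problem exactly as the paper does, and for $\sigma_{m+1}^\lambda$ the paper's one-line ``integrating with respect to $t$'' is precisely your semigroup argument $\frac{d}{dt}\mathbf T_t\sigma_{m+1}^\lambda=\mathbf T_t\partial_{\lambda,\R}\sigma_{m+1}^\lambda=\mathbf T_t\sigma_m^\lambda$ followed by integration. Your extra discussion (the explicit check that $\phi_{m,m,j}^\lambda$ collapses to a single term, and the remark on the $\sigma_{m+1}^\lambda$-coefficient) is sound but the paper simply omits it.
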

These formulas where first obtained by Santaló \cite{santalopol}.
\begin{proof} 
By \eqref{eq:exponential_T},\eqref{eq:conmutan_real} and \eqref{eq:J_real}, we have for $0 \leq i \leq m$,
\begin{align*}
    \mathbf T_t \sigma_i^{\lambda}&=\mathrm{exp}(t\derivada{\lambda,\C})(\sigma_i^{\lambda})\\
    &={m\choose i}\mathrm{exp}(t \derivada{\lambda,\C})\circ  J_{\lambda,\C}(x^{i} y^{m-i})\\
    &={m\choose i}J_{\lambda,\C}\circ  \mathrm{exp} (t Y_\lambda)(x^{i} y^{m-i})\\
    &=J_{\lambda,\C} (p_{i}(t)).
\end{align*}This proves \eqref{eq:tubo_sigma_real} of which \eqref{eq:tubo_area_real} is a particular case. Integrating with respect to $t$ yields \eqref{eq:tubo_vol_real}.
\end{proof}

\begin{ob} It is worth pointing out the similarity between tube formulas in real and complex space forms. More precisely, note that the isomorphism 
\begin{equation}\label{eq:Fnr}
\mathbf{F}_{n,r}\: \mathcal H_{\lambda}^{2n-4r+1} \hasta \Inv{n}{r}, \qquad   \sigma_{j}^\lambda\longmapsto \sigma_{k+2j,r+j}^\lambda
\end{equation} between the subspaces  $\mathcal H_{\lambda}^{2n-4r+1}\subset\mathcal{V}^{2n-4r+1}_{\lambda,\R} $  and $\Inv{n}{r}\subset \vlc$
commutes with the tubular operator $\mathbf{T}_t$. This is explained by \eqref{eq:conmutan} and \eqref{eq:conmutan_real}.
\end{ob}

\subsection{Spectral analysis of the derivative map}
Here we compute the eigenvalues and eigenvectors of $\derivada{\lambda,\R}$ and $\derivada{\lambda,\C}$. Note that the tube formulas for such valuations are extremely simple: if $\derivada{} \mu=a\mu$ with $a\in\C$, then $\mathbf{T}_t\mu=e^{at}\mu$.

\begin{prop}\label{prop:auto_complejo}
For $0 \leq 2r \leq n$, the restriction of $\derivada{\lambda,\C}$ to ${\Inv{n}{r}}$ has the following (simple) eigenvalues and eigenspaces:
\begin{align*}
\nm{spec}\left(\restr{\derivada{\lambda,\C}}{\Inv{n}{r}}\right) &= \suce{0,\pm 2\sqrt{-\lambda},\pm 4\sqrt{-\lambda},\dots,\pm 2(n-2r)\sqrt{-\lambda},},\\
{E_{(2k-2n+4r)\sqrt{-\lambda}}}&=\nm{span}_{\C}\suce{ J_{\lambda,\C}(e_1^{k}e_{2}^{2n-4r-k})},\qquad 0 \leq k \leq 2n-4r.
\end{align*}
Hence $\derivada{\lambda,\C}$ diagonalizes on $\mathcal V_{\lambda,\C}^n$ with the following eigenspaces:
\begin{align*}
E_{2j\sqrt{-\lambda}}(\derivada{\lambda,\C}) &= \nm{span}_\C \suce{J_{\lambda,\C} (e_1^{j+n-2r}e_2^{n-2r-j}) : 0 \leq 2r \leq {\min\{n-j,n+j\}}},
\end{align*}for $-n\leq j\leq n$.
\end{prop}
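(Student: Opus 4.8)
The plan is to transport the spectral description of $Y_\lambda$ from Lemma \ref{lemaauto} through the intertwining isomorphism $J_{\lambda,\C}$. We assume $\lambda\neq 0$ throughout, since for $\lambda=0$ the operator $\derivada{\lambda,\C}$ is nilpotent and $e_1=e_2=y$, so the statement degenerates in that case.

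First I would restrict \eqref{eq:conmutan} to the homogeneous summand $V^{(2n-4r)}\subset W_n$. By definition $J_{\lambda,\C}$ maps $V^{(2n-4r)}$ isomorphically onto $\Inv{n}{r}$, and $Y_\lambda=Y-\lambda X$ preserves each $V^{(m)}$; hence the restricted map $J_{\lambda,\C}\colon V^{(2n-4r)}\to\Inv{n}{r}$ conjugates $Y_\lambda|_{V^{(2n-4r)}}$ to $\derivada{\lambda,\C}|_{\Inv{n}{r}}$. Conjugate endomorphisms share their spectrum, with eigenspaces corresponding under the conjugating map, so Lemma \ref{lemaauto} applied with $m=2n-4r=2(n-2r)$ gives at once that $\derivada{\lambda,\C}|_{\Inv{n}{r}}$ is diagonalizable with simple eigenvalues $(2k-2n+4r)\sqrt{-\lambda}$ for $0\le k\le 2n-4r$ --- which exhaust $\{0,\pm 2\sqrt{-\lambda},\dots,\pm 2(n-2r)\sqrt{-\lambda}\}$ --- and with one-dimensional eigenspace $\nm{span}_\C\{J_{\lambda,\C}(e_1^k e_2^{2n-4r-k})\}$ attached to the eigenvalue $(2k-2n+4r)\sqrt{-\lambda}$. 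This is precisely the first assertion.

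For the global statement I would invoke the decomposition $\vlc=\bigoplus_{0\le 2r\le n}\Inv{n}{r}$ into $\derivada{\lambda,\C}$-invariant subspaces (noted in the Remark preceding the proposition, and itself a consequence of $W_n=\bigoplus_r V^{(2n-4r)}$ together with \eqref{eq:conmutan}). Since $\derivada{\lambda,\C}$ diagonalizes on each summand, it diagonalizes on $\vlc$, and its $2j\sqrt{-\lambda}$-eigenspace is the direct sum of the $2j\sqrt{-\lambda}$-eigenspaces of the summands. Fixing $j$ with $|j|\le n$, the eigenvalue $2j\sqrt{-\lambda}$ occurs in $\Inv{n}{r}$ iff $2k-2n+4r=2j$ for some admissible $k$; this forces $k=j+n-2r$, and the constraint $0\le j+n-2r\le 2(n-2r)$ rewrites as $0\le 2r\le\min\{n-j,n+j\}$. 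Substituting $k=j+n-2r$ into $e_1^k e_2^{2n-4r-k}$ yields $e_1^{j+n-2r}e_2^{n-2r-j}$, which is the stated formula for $E_{2j\sqrt{-\lambda}}(\derivada{\lambda,\C})$.

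There is no serious obstacle here: all the substantive work has already been done in Lemma \ref{lemaauto} and in establishing the intertwining \eqref{eq:conmutan}. The only points requiring a little care are the index bookkeeping that converts ``$2j\sqrt{-\lambda}$ is attained in $\Inv{n}{r}$'' into the clean range $0\le 2r\le\min\{n-j,n+j\}$, and flagging the hypothesis $\lambda\neq 0$, which is what makes $Y_\lambda$ --- hence $\derivada{\lambda,\C}$ --- diagonalizable rather than nilpotent.
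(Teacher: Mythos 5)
Your argument is exactly the paper's: the proof there consists of the single line ``Everything follows from Lemma \ref{lemaauto} and \eqref{eq:conmutan},'' and your write-up simply makes explicit the conjugation of $Y_\lambda|_{V^{(2n-4r)}}$ to $\derivada{\lambda,\C}|_{\Inv{n}{r}}$ and the index bookkeeping, both of which are correct. The remark about $\lambda\neq 0$ is a reasonable (implicit in the paper) caveat; nothing further is needed.
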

\begin{proof}
    Everything follows from Lemma \ref{lemaauto} and \eqref{eq:conmutan}.
\end{proof}

\begin{prop}\label{prop:auto_real}
\begin{enumerate}[i)]
 \item In $\RR{2n}$ the derivative operator is diagonalizable with
    \begin{align}
\nm{spec}(\derivada{\lambda,\R})& = \suce{0,\pm \sqrt{-\lambda},{\pm3\sqrt{-\lambda}},\dots, \pm (2n-1)\sqrt{-\lambda}},\\
 E_0(\derivada{\lambda,\R}) &= \nm{span}_\C\{\chi\}\\
    E_{(2k-2n+1)\sqrt{-\lambda}}(\derivada{\lambda,\R}) &= \nm{span}_\C \{J_{\lambda,\R}(e_1^{k}e_2^{2n-k-1})\}, \quad 0 \leq k \leq 2n-1\label{eq:eigen_par}
    \end{align}

    \item In $\RR{2n+1}$ the derivative operator is not diagonalizable since
    \begin{align}
\nm{spec}(\derivada{\lambda,\R}) &= \suce{0,0,\pm 2\sqrt{-\lambda},{\pm4\sqrt{-\lambda}},\dots,\pm 2n\sqrt{-\lambda}},\\
    E_0(\derivada{\lambda,\R}) &= \nm{span}_\C\{\chi\}\\
    E_{(2k-2n)\sqrt{-\lambda}}(\derivada{\lambda,\R}) &= \nm{span}_\C \{J_{\lambda,\R}(e_1^{k}e_2^{2n-k})\}, \quad 0 \leq k \leq 2n.\label{eq:eigen_impar}
    \end{align}
  \end{enumerate} 
\end{prop}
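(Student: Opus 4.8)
The statement asserts a spectral description of $\partial_{\lambda,\R}$ on $\mathcal V_{\lambda,\R}^{m}$ in the two cases $m=2n$ and $m=2n+1$. As in Proposition \ref{prop:auto_complejo}, the strategy is to transport the spectral analysis of $Y_\lambda$ on the model space $V^{(m')}$ (Lemma \ref{lemaauto}) through the intertwining map $J_{\lambda,\R}$ via \eqref{eq:conmutan_real}, and then to account for the one extra dimension coming from $\sigma_{m}^\lambda=\nm{vol}$, which is not in the image of $J_{\lambda,\R}$. Recall that $J_{\lambda,\R}\colon V^{(m-1)}\to\mathcal V_{\lambda,\R}^{m}$ is a monomorphism onto the hyperplane $\mathcal H_\lambda^{m}=\nm{span}\{\sigma_0^\lambda,\dots,\sigma_{m-1}^\lambda\}$, and \eqref{eq:der_vol} gives $\partial_{\lambda,\R}\sigma_m^\lambda=\sigma_{m-1}^\lambda$. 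So $\partial_{\lambda,\R}$ preserves $\mathcal H_\lambda^{m}$ and maps $\sigma_m^\lambda$ into it; in a basis $\{\sigma_0^\lambda,\dots,\sigma_{m-1}^\lambda,\sigma_m^\lambda\}$ the matrix of $\partial_{\lambda,\R}$ is block upper triangular, with the $\mathcal H_\lambda^m$-block conjugate (via $J_{\lambda,\R}$) to $Y_\lambda|_{V^{(m-1)}}$ and a single extra row/column.

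\medskip
\textbf{Case $m=2n$ (so $m-1=2n-1$ is odd).} Here $Y_\lambda|_{V^{(2n-1)}}$ is diagonalizable with the $2n$ distinct eigenvalues $(2k-(2n-1))\sqrt{-\lambda}=(2k-2n+1)\sqrt{-\lambda}$, $0\le k\le 2n-1$, none of which is $0$ (since $2n-1$ is odd), with eigenvectors $e_1^k e_2^{2n-1-k}$. Pushing through $J_{\lambda,\R}$ gives the eigenvalues and eigenspaces \eqref{eq:eigen_par} on $\mathcal H_\lambda^{2n}$. It remains to diagonalize the full operator on the $2n+1$-dimensional space by finding one more eigenvector, necessarily with eigenvalue $0$ (since the characteristic polynomial is that of the $\mathcal H_\lambda^m$-block times $(X-0)$: the extra diagonal entry, coming from $\partial\sigma_m^\lambda=\sigma_{m-1}^\lambda$ having no $\sigma_m^\lambda$-component, is $0$). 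The natural candidate for the $0$-eigenvector is $\chi=\sigma_0^\lambda$ (up to a constant): one checks $\partial_{\lambda,\R}\sigma_0^\lambda=0$ directly, e.g.\ from \eqref{eq:der_sigma_real} with $i=0$ since $\sigma_{-1}^\lambda=0$ and the $\sigma_1^\lambda$-term is $-\lambda\cdot 1\cdot\sigma_1^\lambda$; wait — this is not zero in general, so more care is needed. The correct statement is that $E_0$ is spanned by $\chi$ only after identifying which combination is killed: since $0$ is \emph{not} an eigenvalue of the $\mathcal H_\lambda^m$-block, the $0$-eigenspace of the full operator is one-dimensional and, by the triangular structure, is spanned by the unique (up to scale) vector of the form $\sigma_m^\lambda + (\text{element of }\mathcal H_\lambda^m)$ in the kernel, OR equivalently one computes $\partial_{\lambda,\R}\chi$ and finds it vanishes because $\chi$ is the Euler characteristic, which is invariant under taking tubes ($\mathbf T_t\chi=\chi$, hence $\partial\chi=0$). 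This last observation — $\mathbf T_t\chi=\chi(A_t)=\chi(A)$ for $A$ of positive reach and $t$ small, as $A_t$ deformation retracts to $A$ — is the clean way to see $\partial_{\lambda,\R}\chi=0$, and since $\dim E_0=1$ we get $E_0=\nm{span}_\C\{\chi\}$.

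\medskip
\textbf{Case $m=2n+1$ (so $m-1=2n$ is even).} Now $Y_\lambda|_{V^{(2n)}}$ has eigenvalues $(2k-2n)\sqrt{-\lambda}$, $0\le k\le 2n$, with $k=n$ giving eigenvalue $0$; on $\mathcal H_\lambda^{2n+1}$ this yields \eqref{eq:eigen_impar}. The extra diagonal entry from $\partial\sigma_{2n+1}^\lambda=\sigma_{2n}^\lambda$ contributes another $0$ to the spectrum, so $0$ has algebraic multiplicity at least $2$. As before $\partial_{\lambda,\R}\chi=0$ gives one genuine $0$-eigenvector. The claim of non-diagonalizability is that the geometric multiplicity of $0$ is exactly $1$: equivalently, $\sigma_{2n+1}^\lambda$ (or rather the Jordan structure on the $2$-dimensional generalized $0$-eigenspace) is a genuine $2\times 2$ Jordan block. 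To see this one exhibits a vector $w$ with $\partial_{\lambda,\R}w=\chi$ (or a nonzero multiple), which shows $\chi\in\nm{im}\partial_{\lambda,\R}$ and hence $0$ is a non-semisimple eigenvalue; the volume $\sigma_{2n+1}^\lambda$ maps under $\partial_{\lambda,\R}$ to $\sigma_{2n}^\lambda$, and $\sigma_{2n}^\lambda$ corresponds under $J_{\lambda,\R}$ to the top monomial in $V^{(2n)}$, which under $Y_\lambda$ reaches the $0$-eigenvector of $V^{(2n)}$ after finitely many steps — chaining these shows the generalized eigenspace is a single Jordan chain of length $2$ (the length-$1$ genuine eigenvector $\chi$ together with $\sigma_{2n+1}^\lambda$ suitably corrected), hence $\partial_{\lambda,\R}$ is not diagonalizable. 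I would phrase this via Proposition \ref{prop:formulapreimagenYlambda}: for even $m-1=2n$, $\nm{im}\,Y_\lambda|_{V^{(2n)}}=\ker Z_{2n,\lambda}$ has codimension $1$, its preimages are computed by $\Pi$, and transporting through $J_{\lambda,\R}$ together with the extra relation $\partial\sigma_{2n+1}^\lambda=\sigma_{2n}^\lambda$ pins down the Jordan form.

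\medskip
\textbf{Main obstacle.} The routine part is the transport through $J_{\lambda,\R}$; the delicate point is the bookkeeping at the eigenvalue $0$, i.e.\ correctly locating $\chi$ and $\nm{vol}$ relative to the decomposition and deciding semisimplicity. The cleanest tool is the identity $\partial_{\lambda,\R}\chi=0$ (from $\mathbf T_t\chi=\chi$), which handles the even case immediately; for the odd case the work is to verify that the $2$-dimensional generalized $0$-eigenspace is \emph{indecomposable}, for which one should produce an explicit preimage of $\chi$ under $\partial_{\lambda,\R}$ using the formula for $\Pi$ from Proposition \ref{prop:formulapreimagenYlambda} (adjusted for the shift $V^{(m-1)}\hookrightarrow\mathcal V_{\lambda,\R}^m$ and for \eqref{eq:der_area}, \eqref{eq:der_vol}). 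Once that preimage is written down, non-diagonalizability is immediate.
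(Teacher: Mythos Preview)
Your proposal is correct and follows the same strategy as the paper: transport the spectrum of $Y_\lambda|_{V^{(m-1)}}$ through $J_{\lambda,\R}$ via \eqref{eq:conmutan_real}, use $\partial\chi=0$ for the zero eigenvalue, and in the odd-dimensional case build a length-$2$ Jordan chain at $0$ using the extra generator $\sigma_{2n+1}^\lambda$. The paper executes this last step by expanding $x^{2n}$ in the eigenbasis $\{e_1^i e_2^{2n-i}\}$ and inverting the nonzero eigenvalues directly (rather than via Proposition \ref{prop:formulapreimagenYlambda}) to produce $\nu\in\mathcal H_\lambda^{2n+1}$ with $\partial\nu=\sigma_{2n}^\lambda - c\,J_{\lambda,\R}(e_1^n e_2^n)$, so that $\mu:=\sigma_{2n+1}^\lambda-\nu$ satisfies $\partial^2\mu=0\neq\partial\mu$; this is exactly the ``preimage of $\chi$'' you describe, since $\chi$ is proportional to $J_{\lambda,\R}(e_1^n e_2^n)$.
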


\begin{proof}
\begin{enumerate}[$i)$]
\item  By Lemma \ref{lemaauto} and \eqref{eq:conmutan_real} we have that $(2k-2n+1)\sqrt{-\lambda}$, $0 \leq k \leq 2n-1$, is an eigenvalue of ${\partial_{\lambda,\R}}$ with eigenspace given by \eqref{eq:eigen_par}. The Euler characteristic is clearly an eigenvector with zero eigenvalue. We thus have at least $2n+1$ eigenvalues. Since this is precisely the dimension of $\mathcal V_{\lambda,\R}^{2n}$, the statement follows.

\item In light of Lemma \ref{lemaauto} and \eqref{eq:conmutan_real}, we ascertain that $(2k-2n)\sqrt{-\lambda}$, $0 \leq k \leq 2n$, is an eigenvalue of ${\partial_{\lambda,\R}}$ and the corresponding eigenspace is described by \eqref{eq:eigen_impar}.

Our next objective is to prove that while the algebraic multiplicity of the zero eigenvalue is two, its geometric multiplicity is only one. This will entail finding a valuation $\mu$ that satisfies $\partial_{\lambda,\R}^2 \mu = 0$, while also ensuring that $\partial_{\lambda,\R} \mu \neq 0$. Consider $\sigma_{2n}^\lambda=J_{\lambda,\R}(x^{2n}) \in \mathcal{V}^{2n+1}_{\lambda,\R}$.  In the notation of Lemma \ref{lemaauto}, 
\[
x = \frac{1}{2\sqrt{-\lambda}}(e_1 - e_2), \qquad x^{2n} = (-4\lambda)^{-n}\sum_{i = 0}^{2n}(-1)^i \binom{2n}{i}e_1^{i}e_{2}^{2n-i}.
\]
Hence
\[
\partial_{\lambda,\R}\sigma_{2n+1}^\lambda = \sigma_{2n}^\lambda = (-4\lambda)^{-n}\sum_{i = 0}^{2n}(-1)^i \binom{2n}{i}J_{\lambda,\R}(e_1^{i}e_{2}^{2n-i}).
\]
Consider
\[
\nu := (-4\lambda)^{-n} \sum^{2n}_{\substack{i = 0\\
i\neq n}}\binom{2n}{i}\dfrac{(-1)^i}{(2i-2n)\sqrt{-\lambda}} J_{\lambda,\R}(e_1^{i}e_{2}^{2n-i}),
\]
and note that, by Lemma \ref{lemaauto},
\[
\partial_{\lambda,\R}\nu := (-4\lambda)^{-n} \sum^{2n}_{\substack{i = 0\\
i\neq n}}\binom{2n}{i}(-1)^{i} J_{\lambda,\R}(e_1^{i}e_{2}^{2n-i}),
\]
since  $e_1^{n}e_2^n \in \ker Y_\lambda$.
Finally, we define $\mu =  \sigma_{2n+1}^\lambda-\nu$. Then
\[
\partial_{\lambda,\R} \mu = (-4\lambda)^{-n}\binom{2n}{n}(-1)^n J_{\lambda,\R}(e_1^{n}e_2^{n})\neq 0,
\]
while
\begin{align*}
\partial_{\lambda,\R}^2 \mu &= (-4\lambda)^{-n}\binom{2n}{n}(-1)^n \partial_{\lambda,\R} J_{\lambda,\R}(e_1^{n}e_2^{n}) \\
&= (-4\lambda)^{-n}\binom{2n}{n}(-1)^n J_{\lambda,\R}(Y_{\lambda}(e_1^{n}e_2^{n})) = 0.
\end{align*}{It follows that $\dim\ker\partial_{\lambda,\R}<\dim\ker\partial_{\lambda,\R}^2$. Noting that $\chi\in\ker\partial_{\lambda,\R}$ this implies the statement.}\qedhere
\end{enumerate}
\end{proof}

\begin{ob}
    We conclude from Prosposition \ref{prop:auto_real} and  Lemma \ref{lemaauto} that there is no isomorphism between $\Val^{O(m)}$ and $\vlr$ intertwining $\Lambda- \lambda L$ and $\derivada{\lambda,\R}$. Indeed, these two operators have different spectra no matter the parity of $m$.
\end{ob}

\subsection{Stable valuations in complex space forms}

 We say that a valuation $\varphi\in \mathcal V(M)$ on a riemannian manifold $M$ is {\em stable} if $\partial \mu=0$, or equivalently, if $\mathbf{T}_t\mu=\mu$ for all $t$. By Proposition \ref{prop:auto_real} and \ref{prop:auto_complejo}, up to multiplicative constants, the Euler characteristic is the unique isometry-invariant stable valuation in $\RR{m}$. The complex case is more interesting.

\begin{prop}
The unique (up to multiplicative constants) stable valuation on $\mathcal I_\lambda^{n,r}$ is given by
\[
\psi_{2r} = \sum_{i = r}^{n-r}\binom{n-2r}{i-r}\binom{2n-4r}{2i-2r}^{-1}\lambda^{i-r} \sigma_{2i,r}^\lambda.
\]
\end{prop}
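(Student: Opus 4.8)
The plan is to transport the problem to the model space $V^{(2n-4r)}$ via the isomorphism $J_{\lambda,\C}$ and use the explicit description of $\ker Y_\lambda$ from Lemma \ref{lemaauto} and Proposition \ref{prop:ecuacionkerYlambda}. By \eqref{eq:conmutan}, a valuation $\varphi\in\Inv{n}{r}$ is stable, i.e. $\partial_{\lambda,\C}\varphi=0$, if and only if $\varphi=J_{\lambda,\C}(q)$ with $q\in\ker(\restr{Y_\lambda}{V^{(2n-4r)}})$. Since $m:=2n-4r$ is even, Lemma \ref{lemaauto} tells us this kernel is one-dimensional, spanned by $e_1^{n-2r}e_2^{n-2r}=(\sqrt{-\lambda}x+y)^{n-2r}(-\sqrt{-\lambda}x+y)^{n-2r}=(y^2-\lambda^{-1}\cdot(-\lambda)\,x^2)^{\,\cdot}$; more precisely $e_1e_2=y^2-(-\lambda)x^2\cdot(-1)=y^2+\lambda x^2$ up to the sign of $\lambda$, so the kernel is spanned by $(y^2+\lambda x^2)^{n-2r}$ (one should double-check the sign: $e_1e_2=(\sqrt{-\lambda}x+y)(-\sqrt{-\lambda}x+y)=y^2-(-\lambda)x^2=y^2+\lambda x^2$, so indeed $(y^2+\lambda x^2)^{n-2r}$ generates the kernel). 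Thus the unique stable valuation, up to scale, is $J_{\lambda,\C}\big((y^2+\lambda x^2)^{n-2r}\big)$.

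It then remains to expand $(y^2+\lambda x^2)^{n-2r}$ in the monomial basis $\binom{m}{k}x^ky^{m-k}$ and apply $J_{\lambda,\C}$, which by \eqref{eq:definicionsigma} sends $\binom{m}{k}x^ky^{m-k}$ to $\sigma_{k+2r,r}^\lambda$. By the binomial theorem,
\[
(y^2+\lambda x^2)^{n-2r}=\sum_{i=r}^{n-r}\binom{n-2r}{i-r}\lambda^{i-r}x^{2i-2r}y^{2n-2r-2i},
\]
where I reindexed the summation variable so that the exponent of $x$ is $2i-2r$; here $x^{2i-2r}y^{m-(2i-2r)}$ with $m=2n-4r$ indeed has total degree $m$. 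Rewriting the monomial $x^{2i-2r}y^{2n-2r-2i}$ as $\binom{m}{2i-2r}^{-1}\binom{m}{2i-2r}x^{2i-2r}y^{2n-2r-2i}$ and applying $J_{\lambda,\C}$ gives
\[
J_{\lambda,\C}\big((y^2+\lambda x^2)^{n-2r}\big)=\sum_{i=r}^{n-r}\binom{n-2r}{i-r}\binom{2n-4r}{2i-2r}^{-1}\lambda^{i-r}\,\sigma_{2i,r}^\lambda,
\]
which is exactly $\psi_{2r}$. Since stability of a valuation in $\Inv{n}{r}$ is equivalent to lying in the kernel of $\restr{\partial_{\lambda,\C}}{\Inv{n}{r}}\cong\restr{Y_\lambda}{V^{(m)}}$, which is one-dimensional, this $\psi_{2r}$ is the unique stable valuation up to a multiplicative constant.

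The only genuinely delicate point is getting the signs and the normalization consistent: one must verify that the generator of $\ker Y_\lambda$ really is $(y^2+\lambda x^2)^{n-2r}$ (equivalently $e_1^{n-2r}e_2^{n-2r}$ up to scalar) and that the binomial identity $\binom{m}{2i-2r}^{-1}$ appears precisely as written after converting between the plain monomial basis and the normalized basis $\binom{m}{k}x^ky^{m-k}$ used to define $\sigma_{k+2r,r}^\lambda$ in \eqref{eq:definicionsigma}. One could alternatively verify the answer directly by checking $\partial_{\lambda,\C}\psi_{2r}=0$ using the recursion \eqref{eq:der_kq}--\eqref{eq:der_2qq} translated into the $\sigma_{k,r}^\lambda$ basis, but the model-space argument is cleaner and makes the uniqueness transparent.
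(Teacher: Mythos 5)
Your proof is correct and follows essentially the same route as the paper: transport the problem to $V^{(2n-4r)}$ via $J_{\lambda,\C}$, identify $\ker Y_\lambda$ as the span of $e_1^{n-2r}e_2^{n-2r}=(y^2+\lambda x^2)^{n-2r}$ using Lemma \ref{lemaauto}, expand by the binomial theorem, reindex, and apply \eqref{eq:definicionsigma}. (One tiny note: you cite Proposition \ref{prop:ecuacionkerYlambda}, but that describes $\operatorname{im} Y_\lambda$ rather than $\ker Y_\lambda$; only Lemma \ref{lemaauto} is actually needed.)
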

\begin{proof}By Lemma \ref{lemaauto} the kernel of $Y_\lambda$ on the space $V^{(m)}$ of homogeneous polynomials of degree $m=2n-4r$ is spanned by
    \[
\begin{aligned}
e_1^{n-2r}e_2^{n-2r} &= (y + \sqrt{-\lambda} x)^{n-2r}(y - \sqrt{-\lambda} x)^{n-2r} \\
&= (y^2 + \lambda x^2)^{n-2r} = \sum_{j = 0}^{n-2r}\binom{n-2r}{j}\lambda^j x^{2j} y^{m-2j} \\
&= \sum_{i = r}^{n-r}\binom{n-2r}{i-r}\binom{2n-4r}{2i-2r}^{-1}\lambda^{i-r} \binom{2n-4r}{2i-2r}x^{2i-2r} y^{2n-2i-2r}
\end{aligned}
\]Therefore the kernel of $\derivada{\lambda,\C}$ in  $\Inv{n}{r}$ is spanned by $\psi_{2r}=J_\lambda(e_1^{n-2r}e_2^{n-2r})$ , for each $0 \leq 2r \leq n$.
\end{proof}

Next we express the Euler characteristic as a combination of the stable valuations $\psi_{2r}$. Note in particular that $\chi$ is \emph{not} confined to any $\partial$-invariant subspace $\Inv{n}{r}$.

\begin{prop}
\[
\begin{aligned}
\chi &= \sum_{0 \leq 2r \leq n} \left(\frac{\lambda}{4\pi}\right)^r\binom{2r}{r}\dfrac{r!}{\omega_{2n-2r}}\psi_{2r}.
\end{aligned}
\]
\end{prop}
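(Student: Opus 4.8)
The plan is to expand the Euler characteristic $\chi$ in the basis $\{\sigma_{k,r}^\lambda\}$ and match it against the stable valuations $\psi_{2r}$. The key fact is that, on $\C^n$, we have $\chi=\mu_0=\mu_{0,0}$, and under the Lefschetz decomposition this corresponds to the Tasaki valuation $\tau_{0,0}$. I would first recall from \eqref{eq:descLef} that
\[
\tau_{0,0}=\sum_{i=0}^0\frac{1}{0!}\binom{n}{0}\frac{(2n)!}{(2n)!}\pi_{0,0}=\pi_{0,0},
\]
so $\chi=\pi_{0,0}$ in $\Val^{U(n)}$, and hence $\chi=\mu_{0,0}^\lambda=\pi_{0,0}^\lambda$ in $\mathcal V^n_{\lambda,\C}$ after applying $\mathcal F_{\lambda,\C}$. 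Thus under the isomorphism $I\colon W_n\to\Val^{U(n)}$ of Section~6, $\chi$ corresponds to the element $y^{2n}\in V^{(2n)}\subset W_n$ (up to the normalization in \eqref{eq:definicionsigma}, which gives $\sigma_{0,0}^\lambda=\omega_{2n}\pi_{0,0}^\lambda/0!=\omega_{2n}\chi$).

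The core of the argument is then purely a computation inside the polynomial model $W_n=\bigoplus_{0\le 2r\le n}V^{(2n-4r)}$. I would write $y^{2n}\in V^{(2n)}$ as a linear combination of the kernel generators $e_1^{n-2r}e_2^{n-2r}\in V^{(2n-4r)}$; but this is subtle because these live in \emph{different} irreducible summands. The resolution is that $I$ does not map $W_n$-monomials directly to $\chi$-related things componentwise: rather $\chi\in\Val^{U(n)}$ pulls back to a \emph{single} vector $y^{2n}$, and we must re-express this $y^{2n}$ using the fact that in $\Val^{U(n)}$ one has the relations among the $\pi_{k,r}$ coming from the $\tau_{k,q}$. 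Concretely, I would invert \eqref{eq:pi2rr}: express each $\tau_{2r,i}$ (in particular $\tau_{0,0}=\chi$) and then re-expand using \eqref{eq:descLef} in terms of the $\pi_{2r,i}$, so that $\chi=\sum_{r}c_r(\text{something in }\mathcal I_\lambda^{n,r})$. Since $\chi$ is stable ($\partial_{\lambda,\C}\chi=0$ because $\chi=\mathbf 1$ and $\mathbf{T}_t\chi=\chi$), each component of $\chi$ in $\mathcal I_\lambda^{n,r}$ lies in $\ker(\partial_{\lambda,\C}|_{\mathcal I_\lambda^{n,r}})=\C\psi_{2r}$. This immediately gives $\chi=\sum_{0\le 2r\le n}c_r\psi_{2r}$ for suitable scalars $c_r$, and the whole problem reduces to computing $c_r$.

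To pin down $c_r$ I would compare the coefficient of a single conveniently chosen basis valuation—most naturally $\sigma_{2r,r}^\lambda$ (the "bottom" of the $r$-th ladder)—on both sides. On the right side only $\psi_{2r}$ contributes to $\sigma_{2r,r}^\lambda$ (each $\psi_{2s}$ with $s\ne r$ lives in a different ladder $\mathcal I_\lambda^{n,s}$), and the formula for $\psi_{2r}$ shows its $\sigma_{2r,r}^\lambda$-coefficient is $\binom{n-2r}{0}\binom{2n-4r}{0}^{-1}\lambda^0=1$. On the left side, the coefficient of $\sigma_{2r,r}^\lambda$ in $\chi$ must be extracted from the Lefschetz/Tasaki bookkeeping: $\sigma_{2r,r}^\lambda=\frac{\omega_{2n-2r}}{0!}\pi_{2r,r}^\lambda$, so I need the coefficient of $\pi_{2r,r}$ when $\chi=\tau_{0,0}$ is written in the $\{\pi_{k,i}\}$ basis. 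This coefficient comes from applying $L^{-(2r)}$-type reasoning; more cleanly, using $\mu_{0,0}=\tau_{0,0}$ and the inverse of the triangular change of basis in Proposition~\ref{prop:primitives}, the coefficient of $\pi_{2r,r}$ in $\mu_{0,0}$ equals a ratio of double factorials times $1/\omega_{2n}$—a standard binomial identity will show it equals $\left(\frac{\lambda}{4\pi}\right)^r\binom{2r}{r}\frac{r!}{\omega_{2n-2r}}$ once the normalization $\sigma_{0,0}^\lambda=\omega_{2n}\chi$ is divided out. Matching then yields $c_r=\left(\frac{\lambda}{4\pi}\right)^r\binom{2r}{r}\frac{r!}{\omega_{2n-2r}}$, which is the claimed formula.

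The main obstacle I anticipate is the bookkeeping in the last step: correctly tracking the double-factorial coefficients through the chain $\mu_{0,0}=\tau_{0,0}\leadsto\{\pi_{2r,i}\}$ (Proposition~\ref{prop:primitives} and \eqref{eq:descLef}) and through the normalizations relating $\pi_{k,r}^\lambda$, $\sigma_{k,r}^\lambda$ and the $\omega$-factors in $\Phi_\lambda$, and then verifying that the resulting scalar simplifies to $\left(\frac{\lambda}{4\pi}\right)^r\binom{2r}{r}\frac{r!}{\omega_{2n-2r}}$ via the identity $k\omega_k=2\pi\omega_{k-2}$ (equivalently $\omega_{2n}/\omega_{2n-2r}=(2\pi)^r/((2n)(2n-2)\cdots(2n-2r+2))$). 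This is entirely routine in principle but error-prone; everything else (stability of each $r$-component, reduction to a one-parameter family $c_r\psi_{2r}$) follows formally from Lemma~\ref{lemaauto} and \eqref{eq:conmutan}.
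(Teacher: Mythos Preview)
Your overall strategy---write $\chi=\sum_r c_r\psi_{2r}$ using stability of $\chi$ and the one-dimensionality of $\ker(\partial_{\lambda,\C}|_{\mathcal I_\lambda^{n,r}})$, then determine each $c_r$ by comparing one coefficient---is exactly what the paper does. The problem is the step where you compute that coefficient.

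You assert that $\chi=\mu_{0,0}^\lambda=\pi_{0,0}^\lambda$ in $\mathcal V_{\lambda,\C}^n$ ``after applying $\mathcal F_{\lambda,\C}$''. This is false for $\lambda\neq 0$. The linear isomorphism $\mathcal F_{\lambda,\C}$ is defined by $\mu_{k,q}\mapsto\mu_{k,q}^\lambda$; it sends the flat $\chi=\mu_{0,0}$ to $\mu_{0,0}^\lambda$, but $\mu_{0,0}^\lambda$ is \emph{not} the Euler characteristic on $\CC{n}$ (the paper even warns that the $\mu_{k,q}^\lambda$ are not the hermitian intrinsic volumes of $\CC{n}$). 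In fact your identification would force $\chi\in\mathcal I_\lambda^{n,0}$, hence $c_r=0$ for all $r>0$, contradicting the very formula you are trying to prove. The later sentence about ``re-expressing $y^{2n}$ using relations among the $\pi_{k,r}$'' cannot rescue this: the $\pi_{k,r}$ form a basis, there are no relations, and the coefficient of $\pi_{2r,r}$ in $\mu_{0,0}=\pi_{0,0}$ is simply $\delta_{r,0}$.

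What is actually needed is an independent expansion of $\chi\in\mathcal V_{\lambda,\C}^n$ in the basis $\{\mu_{k,q}^\lambda\}$ (or $\{\tau_{k,p}^\lambda\}$). The paper imports this from \cite[Theorem 3.11]{bfs}, which gives $\chi$ as an explicit $\lambda$-dependent combination of the $\tau_{2k+2p,p}^\lambda$; reading off the coefficient of $\tau_{2r,r}^\lambda$ yields $(\lambda/\pi)^r\binom{2r}{r}r!4^{-r}$. Since $[\tau_{2r,r}^\lambda](\psi_{2s})=\omega_{2n-2r}\delta_{r,s}$ (only $\sigma_{2r,r}^\lambda$ contributes, via $\pi_{2r,r}^\lambda$), matching gives $c_r$. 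Without that external input (or an equivalent computation of $\chi$ in the curved basis), your argument has no way to produce the $\lambda$-dependence in $c_r$.
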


\begin{proof}
Since $\chi$ is stable, it can be expressed as $\chi =\sum_j a_j \psi_{2j}$. By  \cite[Theorem 3.11]{bfs}
\[
\begin{aligned}
\chi &=\left. \sum_{k,p \geq 0}\left(\frac{\lambda}{\pi}\right)^{k+p} \frac{\partial^{k+p}}{\partial \xi^{k} \partial \eta^{p}}\frac{1}{\sqrt{1-\xi}\sqrt{1-\eta}}\right|_{(0,0)} \tau_{2 k+2 p, p}^{\lambda}.
\end{aligned}
\]
The coefficient of $\tau_{2r,r}^{\lambda}$ in this expansion is
\begin{align*}
[\tau_{2r,r}^{\lambda}](\chi) &=\left(\frac{\lambda}{\pi}\right)^r\left. \frac{\partial^{r}}{\partial \eta^r}\frac{1}{\sqrt{1-\eta}}\right|_{(0,0)}\\
&={\left(\frac{\lambda}{\pi}\right)^r} \binom{2r}{r}r!4^{-r}.
\end{align*}

By Proposition \ref{prop:primitives}, we have \[[\tau_{2r,r}^{\lambda}](\sigma_{k,r}^\lambda)=\frac{\omega_{2n-k}}{(k-2r)!}[\tau_{2r,r}^{\lambda}](\pi_{k,r}^\lambda) = \frac{\omega_{2n-k}}{(k-2r)!}\delta_{k,2r},\] whence
\[
\begin{aligned}
[\tau_{2r,r}^\lambda]\left(\sum_j a_j \psi^{(j)}\right) &= a_r[\tau_{2r,r}^\lambda](\sigma_{2r,r}^\lambda) \\
&= a_r\omega_{2n-2r}.
\end{aligned}
\]
Hence
\[
a_r = {\left(\frac{\lambda}{\pi}\right)^r}\binom{2r}{r}\dfrac{r!}{4^r \omega_{2n-2r}}
\]
and the result follows.
\end{proof}

\subsection{Image of $\partial_{\lambda,\C}$ and $\partial_{\lambda,\R}$}
Next we describe the image of the operators $\partial_{\lambda,\C}$ and $\partial_{\lambda,\R}$, and we compute the preimage of any element belonging to them.

\begin{prop}
    Given any $\varphi = \sum_{k,r} a_{k,r}\sigma_{k,r}^\lambda \in\vlc$, we have $ \varphi\in \nm{im}\partial_{\lambda,\C}$ if and only if
    \begin{equation}\label{eq:imagen}
    \sum_{l = r}^{n-2r} a_{2l,r} \binom{n-2r}{l-r}\lambda^{n-l-r} = 0,\qquad\mbox{for }
   \quad 0 \leq 2r \leq n.
    \end{equation}
\end{prop}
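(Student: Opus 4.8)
The plan is to reduce the statement, component by component, to the analogous question for $Y_\lambda$ on the model spaces $V^{(2n-4r)}$, which was already settled in Proposition~\ref{prop:ecuacionkerYlambda}. Write $\varphi=\sum_{0\le 2r\le n}\varphi_r$ with $\varphi_r:=\sum_k a_{k,r}\sigma_{k,r}^\lambda\in\Inv{n}{r}$. Since $\vlc=\bigoplus_r\Inv{n}{r}$ and each summand is $\partial_{\lambda,\C}$-invariant, one has $\varphi\in\nm{im}\,\partial_{\lambda,\C}$ precisely when $\varphi_r\in\nm{im}\bigl(\restr{\partial_{\lambda,\C}}{\Inv{n}{r}}\bigr)$ for every $r$; so it is enough to characterise membership in the image on each $\Inv{n}{r}$ separately.

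Fix $r$ and set $m:=2n-4r$, which is even because $0\le 2r\le n$. By \eqref{eq:conmutan} the isomorphism $J_{\lambda,\C}$ restricts to an isomorphism $V^{(m)}\to\Inv{n}{r}$ intertwining $\restr{Y_\lambda}{V^{(m)}}$ and $\restr{\partial_{\lambda,\C}}{\Inv{n}{r}}$; hence $\varphi_r$ lies in the image of the latter if and only if $J_{\lambda,\C}^{-1}(\varphi_r)$ lies in $\nm{im}\bigl(\restr{Y_\lambda}{V^{(m)}}\bigr)$. By \eqref{eq:definicionsigma} we have $J_{\lambda,\C}^{-1}(\sigma_{k,r}^\lambda)=\binom{m}{k-2r}x^{k-2r}y^{m-k+2r}$, so $J_{\lambda,\C}^{-1}(\varphi_r)=\sum_k a_{k,r}\binom{m}{k-2r}x^{k-2r}y^{m-k+2r}$.

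Now apply Proposition~\ref{prop:ecuacionkerYlambda}: since $m$ is even, $\nm{im}\bigl(\restr{Y_\lambda}{V^{(m)}}\bigr)=\ker Z_{m,\lambda}$. Evaluating $Z_{m,\lambda}$ on $J_{\lambda,\C}^{-1}(\varphi_r)$ via \eqref{eq:Z_monomio}, only the monomials with an even power of $x$ contribute — i.e.\ those coming from the $\sigma_{2l,r}^\lambda$ — and each such monomial is sent to a scalar. Clearing the combinatorial factors with $\binom{m}{k-2r}(k-2r)!\,(m-k+2r)!=m!$ and using $m/2=n-2r$, one finds that $Z_{m,\lambda}\bigl(J_{\lambda,\C}^{-1}(\varphi_r)\bigr)=m!\sum_{l}a_{2l,r}\binom{n-2r}{l-r}\lambda^{\,n-l-r}$, the binomial coefficient automatically restricting $l$ to the range where $\sigma_{2l,r}^\lambda$ is defined. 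Since $m!\neq 0$, the vanishing of this scalar is exactly condition \eqref{eq:imagen} for the given $r$, which completes the argument.

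I do not expect a genuine obstacle here: the structural ingredients — the $\partial_{\lambda,\C}$-invariant splitting $\vlc=\bigoplus\Inv{n}{r}$, the intertwiner $J_{\lambda,\C}$, and the identity $\nm{im}\,Y_\lambda=\ker Z_{m,\lambda}$ — are all available from earlier sections. The only point requiring care is the index bookkeeping in the last computation, namely checking that the surviving terms are precisely those indexed by the even degrees $k=2l$ and that, after clearing factorials, the constant multiplying $\sum_l a_{2l,r}\binom{n-2r}{l-r}\lambda^{\,n-l-r}$ is a nonzero multiple of $m!$.
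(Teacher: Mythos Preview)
Your proof is correct and follows essentially the same route as the paper: decompose $\varphi$ along $\vlc=\bigoplus_r\Inv{n}{r}$, transfer each component to $V^{(2n-4r)}$ via the intertwiner $J_{\lambda,\C}$, invoke Proposition~\ref{prop:ecuacionkerYlambda}, and evaluate $Z_{2n-4r,\lambda}$ on monomials using \eqref{eq:Z_monomio}. The only difference is that you spell out the role of $J_{\lambda,\C}^{-1}$ explicitly, whereas the paper tacitly identifies $\varphi_r$ with its polynomial preimage.
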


\begin{proof}Note that $\varphi=\sum_{r}\varphi_r$ with $\varphi_r=\sum_{k} a_{k,r}\sigma_{k,r}^\lambda$ is the decomposition of $\varphi$ corresponding to $\vlc=\bigoplus_{r=0}^{\left\lfloor n/2\right\rfloor} \Inv{n}{r}$.
    By \eqref{eq:conmutan} and Proposition \ref{prop:ecuacionkerYlambda} we have $ \varphi\in \nm{im}\partial_{\lambda,\C}$ if and only if for every $r$
\begin{align}
   0= Z_{2n-4r,\lambda}(\varphi_r)&= \sum_{k=2r}^{2n-4r} a_{k,r} \binom{2n-4r}{k-2r}Z_{2n-4r,\lambda}(x^{k-2r}y^{2n-k-2r}) \\
&= \sum_{l=r}^{n-2r}a_{2l,r}\binom{2n-4r}{2l-2r}\binom{n-2r}{l-r}\lambda^{n-l-r}(2l-2r)!(2n-2l-2r)!\\
&=(2n-4r)!\sum_{l = r}^{n-2r} a_{2l,r} \binom{n-2r}{l-r}\lambda^{n-l-r}
\end{align}
where we used \eqref{eq:Z_monomio}.
\end{proof}

\begin{prop}
    Given  $\varphi = \sum_{k,r} a_{k,r}\sigma_{k,r}^\lambda \in\vlc$ satisfying \eqref{eq:imagen} we have
    \[
    \partial_{\lambda,\C}^{-1}(\{\varphi\})=\sum_{k,r} a_{k,r} J_{\lambda,\C}(P_{2n-4r,k-2r+1})+\nm{span}\{\psi_{2r}\: 0\leq 2r \leq n\}
    \]
    where $P_{m,l}$ is given by \eqref{def:Pk}.
\end{prop}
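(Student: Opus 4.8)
The statement to prove describes the full preimage $\partial_{\lambda,\C}^{-1}(\{\varphi\})$ for any $\varphi$ satisfying the image condition \eqref{eq:imagen}. The strategy is to transport the whole problem to the model space $W_n$ via the isomorphism $J_{\lambda,\C}$, use the intertwining relation \eqref{eq:conmutan}, and then invoke the explicit preimage formula on each summand $V^{(2n-4r)}$ already furnished by Proposition \ref{prop:formulapreimagenYlambda}. Concretely, writing $\varphi = \sum_r \varphi_r$ with $\varphi_r = \sum_k a_{k,r}\sigma_{k,r}^\lambda \in \Inv{n}{r}$, the relation $\sigma_{k,r}^\lambda = J_{\lambda,\C}\bigl(\binom{2n-4r}{k-2r}x^{k-2r}y^{2n-k-2r}\bigr)$ from \eqref{eq:definicionsigma} means $\varphi_r = J_{\lambda,\C}(\tilde\varphi_r)$ where $\tilde\varphi_r = \sum_k a_{k,r}\binom{2n-4r}{k-2r}x^{k-2r}y^{2n-k-2r} \in V^{(2n-4r)}$. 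Since $\varphi$ satisfies \eqref{eq:imagen}, the computation in the proof of the previous proposition shows $Z_{2n-4r,\lambda}(\tilde\varphi_r)=0$, so $\tilde\varphi_r \in \nm{im}(\restr{Y_\lambda}{V^{(2n-4r)}})$ by Proposition \ref{prop:ecuacionkerYlambda}.

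**Key steps.** First I would reduce, by \eqref{eq:conmutan} and the direct sum decomposition $\vlc = \bigoplus_r \Inv{n}{r}$ (with each $\Inv{n}{r}$ being $\partial_{\lambda,\C}$-invariant), to solving $\partial_{\lambda,\C}\psi_r = \varphi_r$ inside each $\Inv{n}{r}$; equivalently, via $J_{\lambda,\C}$, to solving $Y_\lambda w_r = \tilde\varphi_r$ inside $V^{(2n-4r)}$. Second, apply Proposition \ref{prop:formulapreimagenYlambda} with $m=2n-4r$: the operator $\Pi$ defined there sends $\binom{m}{k}x^ky^{m-k}\mapsto P_{m,k+1}$ and satisfies $Y_\lambda \circ \Pi(\tilde\varphi_r)=\tilde\varphi_r$; hence $\Pi(\tilde\varphi_r) = \sum_k a_{k,r}P_{2n-4r,k-2r+1}$ is a particular preimage, so $J_{\lambda,\C}\bigl(\sum_{k}a_{k,r}P_{2n-4r,k-2r+1}\bigr)$ is a particular preimage of $\varphi_r$. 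Summing over $r$ gives the particular solution $\sum_{k,r} a_{k,r} J_{\lambda,\C}(P_{2n-4r,k-2r+1})$. Third, the general solution is this particular solution plus $\ker\partial_{\lambda,\C}$; by Proposition \ref{prop:auto_complejo} (or the stable-valuation computation), $\ker\partial_{\lambda,\C} = \nm{span}\{\psi_{2r}: 0\leq 2r\leq n\}$, since on each $V^{(2n-4r)}$ with $m=2n-4r$ even the kernel of $Y_\lambda$ is one-dimensional and $J_{\lambda,\C}(e_1^{n-2r}e_2^{n-2r})=\psi_{2r}$. This yields exactly the asserted formula.

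**Main obstacle.** There is no serious obstacle; the work has all been front-loaded into Propositions \ref{prop:ecuacionkerYlambda} and \ref{prop:formulapreimagenYlambda} and the intertwining \eqref{eq:conmutan}. The one point requiring a small amount of care is bookkeeping the index shifts: $\sigma_{k,r}^\lambda$ corresponds to $\binom{2n-4r}{k-2r}x^{k-2r}y^{2n-k-2r}$, and $\Pi$ raises the $x$-exponent by one, so $P_{2n-4r,k-2r+1}$ is indeed the right summand; one should also check that when $k=2n-2r$ the boundary term $c_{m,k}x^m$ vanishes ($c_{m,m}=0$) and when $k=2r$ with $2n-4r$ even the relevant $c_{m,2}=0$, exactly as verified in the proof of Proposition \ref{prop:formulapreimagenYlambda}. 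A secondary routine point is confirming $\ker\partial_{\lambda,\C}$ is precisely $(\lfloor n/2\rfloor+1)$-dimensional with basis $\{\psi_{2r}\}$, which is immediate from the block structure since $Y_\lambda$ on $V^{(m)}$ has a one-dimensional kernel exactly when $m$ is even and $2n-4r$ is always even.
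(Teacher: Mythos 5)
Your proposal is correct and follows exactly the paper's argument: decompose $\varphi=\sum_r\varphi_r$ along $\vlc=\bigoplus_r\Inv{n}{r}$, transport to the model space via $J_{\lambda,\C}$ using \eqref{eq:conmutan}, apply Proposition \ref{prop:formulapreimagenYlambda} to produce the particular preimage $\sum_{k,r}a_{k,r}J_{\lambda,\C}(P_{2n-4r,k-2r+1})$, and add the kernel $\nm{span}\{\psi_{2r}\}$. The index bookkeeping ($\sigma_{k,r}^\lambda\leftrightarrow\binom{2n-4r}{k-2r}x^{k-2r}y^{2n-k-2r}$, so $\Pi$ yields $P_{2n-4r,k-2r+1}$) is also handled correctly.
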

\begin{proof}
    This follows at once from Proposition \ref{prop:formulapreimagenYlambda} after decomposing $\varphi=\sum_r\varphi_r$ as in the previous proof.
\end{proof}

\begin{prop}
    The image of $\partial_{\lambda,\R}$ in $\vlr$ is the hyperplane $\mathcal H_\lambda^m$ generated by $\sigma_0^{\lambda},\dots,\sigma^{\lambda}_{m-1}$. Moreover
\begin{align}\label{eq:derivada_phik}
    \partial_{\lambda,\R}\phi^{k} =\dfrac{k! \omega_k}{\pi^{k}\omega_{m-k}}\sigma_{k-1}^\lambda, \qquad 1 \leq k \leq m,
\end{align}
    where $\phi^k=\sum_{j \geq 0}\left(\frac{\lambda}{4}\right)^j \tau_{k+2j}^\lambda$. In particular \[
    \partial_{\lambda,\R}^{-1}(\{\sigma^{\lambda}_{k-1}\})=\dfrac{\pi^{k}\omega_{m-k}}{k! \omega_k}\phi^k+\C\cdot\chi.
    \]      
\end{prop}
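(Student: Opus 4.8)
The plan is to establish \eqref{eq:derivada_phik} first, by a direct computation in the basis $\{\sigma_i^\lambda\}$, and then to read off the description of $\nm{im}\,\partial_{\lambda,\R}$ and the preimage formula as formal consequences. Using \eqref{eq:sigmatau} and \eqref{eq:sigmatauvol} I would write $\tau_i^\lambda=b_i\sigma_i^\lambda$, where $b_i=\frac{i!\,\omega_i}{\pi^i(m-i)\omega_{m-i}}$ for $0\le i\le m-1$ and $b_m=\frac{m!\,\omega_m}{\pi^m}$, so that
\[
\phi^k=\sum_{j\ge 0}\Bigl(\tfrac{\lambda}{4}\Bigr)^{j}b_{k+2j}\,\sigma_{k+2j}^\lambda ,
\]
a finite sum since $\tau_i^\lambda=0$ for $i>m$. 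I would then apply $\partial_{\lambda,\R}$ term by term via Proposition \ref{prop:der_sigma}. For $k\le m-2$ the $\sigma_{k-1}^\lambda$-component of $\partial_{\lambda,\R}\phi^k$ comes only from $\partial_{\lambda,\R}\sigma_k^\lambda$ and equals $(m-k)b_k$, which a short computation identifies with $\frac{k!\,\omega_k}{\pi^k\omega_{m-k}}$; every other component, along $\sigma_{k+2l+1}^\lambda$ with $l\ge 0$, receives exactly two contributions, one from $\partial_{\lambda,\R}\sigma_{k+2l}^\lambda$ and one from $\partial_{\lambda,\R}\sigma_{k+2l+2}^\lambda$, and the crux is that they cancel. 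For interior indices the cancellation is equivalent to the identity $b_{i+2}/b_i=\frac{4(i+1)}{m-i-2}$, which follows from the recursion $k\omega_k=2\pi\omega_{k-2}$.

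The main obstacle will be the top of the range, where the interior recursion degenerates (its denominator vanishes at $i=m-2$) and the index $m-1$ is exceptional for $\partial_{\lambda,\R}$. There I expect to need precisely the non-generic formulas \eqref{eq:der_area} and \eqref{eq:der_vol} together with the special normalization $b_m=m!\,\omega_m/\pi^m$ dictated by \eqref{eq:sigmatauvol}. Concretely: when $m-k$ is even the highest term of $\phi^k$ is a multiple of $\sigma_m^\lambda$, and cancelling the $\sigma_{m-1}^\lambda$-component forces the identity $b_m=4(m-1)b_{m-2}$, which I would check directly using $\partial_{\lambda,\R}\sigma_m^\lambda=\sigma_{m-1}^\lambda$; when $m-k$ is odd the highest term is a multiple of $\sigma_{m-1}^\lambda$, and cancelling the $\sigma_{m-2}^\lambda$-component uses $\partial_{\lambda,\R}\sigma_{m-1}^\lambda=\sigma_{m-2}^\lambda$ rather than the formal expression $\sigma_{m-2}^\lambda-\lambda m\,\sigma_m^\lambda$. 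I would also dispatch the degenerate cases $k=m-1$ and $k=m$ separately, since then $\phi^k=\tau_k^\lambda$ and the statement is immediate from \eqref{eq:der_area} and \eqref{eq:der_vol}. Each of these reduces to an elementary identity among the $\omega_i$, so this step is technical rather than conceptual, but it is exactly where the normalization chosen for the $\sigma_i^\lambda$ earns its keep.

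Granting \eqref{eq:derivada_phik}, the rest is formal. For the image: on the one hand $\sigma_{k-1}^\lambda\in\nm{im}\,\partial_{\lambda,\R}$ for $1\le k\le m$ by \eqref{eq:derivada_phik}, so $\mathcal H_\lambda^m\subseteq\nm{im}\,\partial_{\lambda,\R}$; on the other, Proposition \ref{prop:der_sigma} shows that $\partial_{\lambda,\R}\sigma_i^\lambda$ never involves $\sigma_m^\lambda$, so $\nm{im}\,\partial_{\lambda,\R}\subseteq\mathcal H_\lambda^m$; hence equality. Since $\dim\nm{im}\,\partial_{\lambda,\R}=m$, the rank--nullity theorem gives $\dim\ker\partial_{\lambda,\R}=1$, and as $\chi$ is stable this forces $\ker\partial_{\lambda,\R}=\C\cdot\chi$. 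Finally, writing $c_k=\frac{k!\,\omega_k}{\pi^k\omega_{m-k}}$, equation \eqref{eq:derivada_phik} says $\partial_{\lambda,\R}\bigl(c_k^{-1}\phi^k\bigr)=\sigma_{k-1}^\lambda$, so $c_k^{-1}\phi^k$ is one preimage of $\sigma_{k-1}^\lambda$ and the full preimage is $c_k^{-1}\phi^k+\ker\partial_{\lambda,\R}=\frac{\pi^k\omega_{m-k}}{k!\,\omega_k}\phi^k+\C\cdot\chi$.
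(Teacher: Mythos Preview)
Your proposal is correct and follows essentially the same route as the paper: convert $\phi^k$ to the $\sigma_i^\lambda$ basis via \eqref{eq:sigmatau}--\eqref{eq:sigmatauvol}, apply Proposition~\ref{prop:der_sigma} term by term, and read off the image and preimage; you simply spell out the telescoping cancellations and the boundary cases more explicitly than the paper does. One small remark: in the $m-k$ odd case the coefficient of $\sigma_{m-2}^\lambda$ coming from $\sigma_{m-1}^\lambda$ is the same whether you use \eqref{eq:der_area} or the formal extension of \eqref{eq:der_sigma_real}; the genuine role of \eqref{eq:der_area} there is to prevent a spurious $\sigma_m^\lambda$ term, not to alter the $\sigma_{m-2}^\lambda$ cancellation.
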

Recall from \cite[eq. (118)]{libroazul} that $\phi^k= \int_{G_{\lambda,\C}}\chi(\cdot \cap g \RR{m-k})dg$ where $dg$ is a properly normalized Haar measure on  $G_{\lambda,\C}$, and $\RR{m-k}$ is an $(m-k)$-dimensional totally geodesic submanifold in $\RR{m}$.
\begin{proof}
    By  \eqref{eq:sigmatau} and \eqref{eq:sigmatauvol}
\begin{align*}
    \phi^k = \sum_{j=0}^{\left\lfloor\frac{m-k-1}2\right\rfloor}\left(\dfrac{\lambda}{4\pi^2}\right)^j\dfrac{(k+2j)!\omega_{k+2j}}{\pi^{k}(m-k-2j)\omega_{m-k-2j}}\sigma_{k+2j}^\lambda+\left(\left(\frac\lambda4\right)^{\frac{m-k}2}\frac{m!\omega_m}{\pi^m}\sigma_m^\lambda\right), 
\end{align*} 
where the term between brackets appears only if $m-k$ is even. Using Proposition \ref{prop:der_sigma}, this yields \eqref{eq:derivada_phik}.  The rest of the statement follows.
\end{proof}

\begin{ob}
Equation \eqref{eq:derivada_phik} also follows from Theorem 4 in \cite{TAMSGil}.
\end{ob}

\nocite{*}

\end{document}